\numberwithin{equation}{section}
\DeclareMathOperator\Lin{Lin}
\def \exp{\mathrm{exp}}
\def \cH{{\mathcal H}}
\def \cB{{\mathcal B}}
\def \cF{{\mathcal F}}
\def \diag{\mathrm{diag\, }}
\def\qand{\quad \text{and}\quad}
\def\B{\mathbb B}
\def\C{\mathbb C}
\def\R{\mathbb R}
\def\N{\mathbb N}
\def\Z{\mathbb Z}
\def\cU{\mathcal U}
\def\cF{\mathcal F}
\def\id{\mathrm{id}}
\def\sA{{\mathsf A}}
\def\sa{{\mathsf a}}
\def\cal{\mathcal }
\def\arr{\overleftarrow}
\newcommand\multiindex[1]{{\underline{#1}}}
\newcommand\mk{\multiindex{k}}
\newcommand\norm[1]{\left\lVert#1\right\rVert}
\newtheorem{proposition}{Proposition}[section]
\newtheorem*{theorem*}{Theorem}
\newtheorem*{problem*}{Problem}
\newtheorem{definition}[proposition] {Definition}
\newtheorem{lemma}[proposition] {Lemma}
\newtheorem{theo}{Theorem}
\newtheorem{coro}[theo]{Corollary}
\theoremstyle{remark}
\newtheorem{example}[proposition]{Example}
\newtheorem{remark}[proposition]{Remark}
\newcommand{\Inv}[1]{\overleftarrow{#1}}
\DeclareTextFontCommand{\emph}{\em\bf}
\newcommand{\Addresses}{{%
		\bigskip

		\footnotesize
		Pierre Berger\par\nopagebreak
		\textit{E-mail address}: \texttt{pierre.berger@imj-prg.fr}

		\medskip

		Bernhard Reinke\par\nopagebreak
		\textit{E-mail address}: \texttt{bernhard.reinke@imj-prg.fr}
		
}}
\begin{document}
\title%
{Parametric linearization of skew products}
\author{Pierre Berger\thanks{IMJ-PRG, CNRS, Sorbonne University, Paris University, partially supported by the ERC project 818737 Emergence of wild differentiable dynamical systems.
},
Bernhard Reinke\thanks{IMJ-PRG, CNRS, Sorbonne University, Paris University, partially supported by the ERC project 818737 Emergence of wild differentiable dynamical systems.
}
}

\date{\today}
\maketitle
\begin{abstract}
  We establish a linearization criterion for skew products of contractions in any dimension.
  We prove their smooth or holomorphic parameter dependence. In the smooth setting, we use the language of tame Fréchet spaces.
We apply our result to the linearization of totally projectively expanding Cantor sets.
\end{abstract}

\tableofcontents

\vspace{2cm}

\section{Introduction}
Linearization is a fundamental problem in dynamics. In the simplest setting of a fixed point $p$ of a differentiable map $f$, one would like
to conjugate the dynamics of $f$ near $p$ to the dynamics of the differential $D_p f$ via a homeomorphism $h$:
\begin{equation*}
  h\circ f= D_pf \circ h\; .
\end{equation*}
By the Hartman-Grotman theorem, see \cite{Hartman_1982}, every contracting fixed point is $C^0$-linearizable. 
There are many results that assert  more regularity for $h$, given extra conditions.  One of the first results in this direction is \cite[pp. XCIX - CV]{Poincare_Oeuvres_I}, in which Poincaré established the analytic case under certain non-resonance conditions, see condition $(\cal R)$ in the next section. The Sternberg linearization theorem for contracting fixed points \cite{sternberg_contractions_1957} shows the smooth case under the same non-resonance conditions.  

These linearization theorems of contracting maps have been generalized in many directions, two of which are the introduction of parameter dependence (see for example \cite{Takens71,sell_smooth_1985}) and the replacement of a contracting map 
by a  skew products of contracting maps (see for example \cite{moreira_yoccoz_2010,guysinsky_katok_normal_1998,berger_normal_2014,Ilyashenko_Romaskevich_2016}). The skew products of contracting maps are interesting since they can be used to provide an atlas of charts for  expanding compact sets in which the dynamics is linear. In bifurcation theory involving non-trivial expanding compact set, it is useful to have a linearization theorem which deals with both 
skew products of contracting maps  and a nice dependence on the dynamics.  This is the subject of our main theorem. While \cite{moreira_yoccoz_2010,berger_normal_2014,Ilyashenko_Romaskevich_2016} deals with one dimensional skew products, our main result will apply to any finite dimensional space. Finite dimensional spaces were studied in  \cite{guysinsky_katok_normal_1998,jonsson_varolin_stable_2002,Kalinin_Sadovskaya_2017} but without the parameter dependence. We will show that the linearization depends smoothly or analytically on the whole space of smooth or analytic skew products. In this aspect our study is finer than \cite{Takens71,sell_smooth_1985} where the dependence was studied only along finite dimensional families. Working with such infinite dimensional parameter spaces should be useful for defining renormalization operators nearby homoclinic tangencies or investigating the prevalence following Sauer-Hunt-York \cite{SHY} of some properties such as the Newhouse phenomenon \cite{Newhouse_1979}.

\section{Main results}
\subsection{Statement of the main theorem}
Let $\B$ be the closed unit ball centered at $0$ in $\R^n$. Let $f \colon \B \rightarrow \B$ be an analytic or smooth contracting diffeomorphism fixing $0$. Assume that $D_0 f = \diag \lambda_i$ is diagonal \emph{without resonances}: 
\begin{itemize}\item[$(\cal R)$] 
for any $1\le i\le n$  and any  multiindex $\mk$ with $|\mk|\neq 1$, we have $|\lambda_{i}|\not=|\lambda^\mk|$.\end{itemize} 
Then Poincar\'e's or   Sternberg's  linearization theorem asserts that $f$  is analytically or respectively smoothly \emph{linearizable}: there exists an analytic or resp. $C^\infty$-diffeomorphism $h$ such that:
\begin{equation*}
  h_f\circ f(x)= D_0f\circ h_f(x)\quad \forall x\in \B\; .
\end{equation*}

We will generalize these theorems in 
 \cref{thm:lin_no_param_smooth} and \cref{cor:lin_holomorphic}. In particular, we make the parameter dependence of these results explicit by noting that $f\mapsto h_f$ depends analytically or respectively tamely smooth  on $f$. %

 In the analytic case, let $\tilde \B\subset \C^n$ be the open unit complex ball centered at $0$ and consider $f \colon \tilde \B \rightarrow \tilde \B$.  Let   $\cal B$ denote the complex Banach space of bounded holomorphic functions from $\tilde \B$ into $\C^n$,   vanishing at 0 and with diagonal differential at $0$.
 Then  Poincaré proved that  $h_f\in \cal B $. We will show that the operator $f\mapsto h_f$ from an open subset of $\cal B$ into $\cal  B$ is holomorphic.   

In the smooth case, we will consider the Fr\' echet space $\cal F$ of $C^\infty$-maps from $\B  $ into $\R^n$, which fix $0$ and whose derivative at $0$ is diagonal. We will show that the operator $f\mapsto h_f$ 
from an open subset  $\cal U \subset \cF$ into $\cal F$  is \emph{smooth}:   there is a   family $(L_{i, f} )_{ i\ge 0}$ of $i$-multilinear maps $L_{i, f} $ of $\cal F$, 
 such that $( f,g_1,\dots, g_i)\in \cal U\times \cal F^i\mapsto  L_{i,  f}(g_1,\dots, g_i)$ is continuous and for every $g\in \cal F$, for the $C^r$-norm it holds:
 \begin{equation*}
    h_{ f+t g} = h_{ f} + \sum_{i=  1}^m L_{i,  f}(g,\dots, g)t^i+o  (t^m ) \quad \text{when } t\to 0\; .
 \end{equation*}
Moreover we will show $f\mapsto h_f$  is \emph{tamely smooth} in the sense of \cite{Hamilton82}  recalled in \cref{sec:frechet}. 
\medskip 

We generalize these results by considering  skew product of contractions.  Let $\sigma: \sA\to \sA$ be a homeomorphism of a compact metric space $\sA$. 
Let $(f_\sa)_{\sa\in \sA}$ be a $C^0$-family of $C^1$-self-maps of $\B$. This defines a skew product: 
\begin{equation*}
  f : (\sa, x)\in  \sA\times \B \mapsto (\sigma(\sa), f_\sa(x))\in  \sA\times \B .
\end{equation*}

We now give a sufficient condition to conjugate $f$ to a linear cocycle. Here are the conditions:
\begin{enumerate}[$(H_1)$]
\item  Each map $f_\sa$ is a contracting diffeomorphism from $\B$
  into $\B$ which fixes $0$,  for every $\sa\in \sA$. 
\item The differential of $f_{\sa}$ at $0$ is diagonal; $Df_\sa(0)=\diag \lambda_{\sa,i}$ for every $\sa\in \sA$.
\item For any $1\le i\le n$  and any 
  multiindex $\mk$ with $|\mk|\neq 1$, we have $|\lambda_{\sa,i}|\not=|\lambda_{ \sa}^\mk|$ for all $\sa \in \sA$,
  and the sign of the difference does not depend on $\sa$.
\end{enumerate}

\begin{theo}
  If $(f_\sa)_{\sa\in \sA}$ is a $C^0$-family of $C^\infty$-maps satisfying
  $(H_1-H_2-H_3)$, then  there exists a unique $C^0$-family $(h_\sa)_{\sa\in \sA}$ of
  $C^\infty$-diffeomorphisms from $\B$ into $\R^n$ such that 
  \begin{equation}
    D_0 h_{\sa} = \id\;, \quad  h_\sa(0) = 0 \quad \text{and} \quad h_{\sigma(\sa)} \circ f_\sa(x) =D_0f_\sa\circ h_\sa(x), \quad x\in \B\; .
    \label{eqn:thm}
  \end{equation}
  Moreover, $h$ depends  tamely smooth   on $f$.
\label{thm:lin_no_param_smooth}
\end{theo}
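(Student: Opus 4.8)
\emph{Overall plan.}
I would run the classical Sternberg scheme — a polynomial normalization removing all low–order terms, followed by a contraction producing the remaining ``flat'' part of the conjugacy — carried out uniformly along the $\sigma$–orbits, while tracking the dependence on $f=(f_\sa)_{\sa\in\sA}$ at every step. It is cleanest to establish \emph{uniqueness} first, since the argument is short, robust, and will be reused at the end. If $(h_\sa)$ and $(h_\sa')$ both satisfy \eqref{eqn:thm}, then $\phi_\sa:=h_\sa'\circ h_\sa^{-1}$ is a $C^0$–family of germs at $0$ with $D_0\phi_\sa=\id$ conjugating the linear cocycle $x\mapsto D_0f_\sa\,x$ to itself, i.e. $\phi_\sa=(D_0f_\sa)^{-1}\circ\phi_{\sigma(\sa)}\circ D_0f_\sa$. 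Comparing Taylor expansions in $x$ degree by degree, the degree–$d$ homogeneous part of $\phi_\sa-\id$ satisfies, componentwise, a \emph{homogeneous} cohomological equation over $(\sA,\sigma)$ whose coefficient cocycle is $\sa\mapsto\lambda_{\sa,i}/\lambda_\sa^\mk$ with $|\mk|=d$. By $(H_3)$, continuity of $\sa\mapsto\lambda_{\sa,i}$, and compactness of $\sA$, this cocycle is \emph{uniformly hyperbolic}: either $\sup_\sa|\lambda_{\sa,i}/\lambda_\sa^\mk|<1$ or $\inf_\sa|\lambda_{\sa,i}/\lambda_\sa^\mk|>1$. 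Hence this equation admits only the zero continuous solution, so $\phi_\sa-\id$ is flat at $0$; writing $D_0f_\sa^{(m)}:=D_0f_{\sigma^{m-1}(\sa)}\circ\cdots\circ D_0f_\sa$ for the linearized $m$–th iterate cocycle, iterating $\phi_\sa-\id=(D_0f_\sa^{(m)})^{-1}\bigl((\phi_{\sigma^m(\sa)}-\id)\circ D_0f_\sa^{(m)}\bigr)$, using that $f$ contracts uniformly and the $|\lambda_{\sa,i}|$ are bounded away from $0$, and letting $m\to\infty$, forces $\phi_\sa=\id$ near $0$, hence $h'=h$ on $\B$ by the conjugacy relation.

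\emph{Formal normalization.}
For existence I would write $f_\sa(x)=D_0f_\sa\,x+\sum_{|\mk|\ge2}f_{\sa,\mk}\,x^\mk$ (with $\sa\mapsto f_{\sa,\mk}$ continuous) and seek a polynomial $\phi_\sa=\id+\sum_{2\le|\mk|<N}q_{\sa,\mk}\,x^\mk$ such that $g_\sa:=\phi_{\sigma(\sa)}\circ f_\sa\circ\phi_\sa^{-1}=D_0f_\sa\,x+O(|x|^N)$. Collecting the degree–$d$ terms, each $q_{\cdot,\mk}$ with $|\mk|=d$ must solve, componentwise, an \emph{inhomogeneous} cohomological equation
\begin{equation*}
  \lambda_\sa^\mk\,q_{\sigma(\sa),\mk,i}-\lambda_{\sa,i}\,q_{\sa,\mk,i}=-P_{\sa,\mk,i},
\end{equation*}
where $P_{\sa,\mk,i}$ is a universal polynomial in the $f_{\sa,\mk'}$ and the already–determined lower–degree $q$'s. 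Rewriting this as $(\id-\cT_{d,i,\mk})\,q_{\cdot,\mk,i}=(\text{source})$, with $\cT_{d,i,\mk}$ the appropriate weighted transfer operator of the cocycle $\sa\mapsto\lambda_{\sa,i}/\lambda_\sa^\mk$ on $C^0(\sA)$ (using the factorization matching the sign fixed by $(H_3)$), the uniform hyperbolicity above makes $\cT_{d,i,\mk}$ a uniform contraction, so $\id-\cT_{d,i,\mk}$ is boundedly invertible with inverse given by a geometric series. This determines $q_{\sa,\mk}$ continuously in $\sa$; moreover, since the $\lambda$'s and the source are built from finitely many Taylor coefficients of $f$ by polynomial operations and the inversion $\sum_{j\ge0}\cT_{d,i,\mk}^{\,j}$ depends analytically (hence tamely smoothly) on the cocycle, the assignment $f\mapsto(q_{\sa,\mk})$ is analytic, resp. tamely smooth, on a neighbourhood $\cU$ of the given $f$. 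In the holomorphic case one checks in addition that the resulting $\phi_\sa$ is bounded holomorphic on $\tilde\B$.

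\emph{The flat part, and globalization.}
After truncating the normalization at a large order $N$ and restricting to a small ball $\boule$ on which $\phi_\sa$ is a diffeomorphism, we have $g_\sa(x)=D_0f_\sa\,x+R_\sa(x)$ with $R_\sa$ vanishing to order $N$ at $0$, uniformly in $\sa$ and tamely smoothly in $f$, and with $g_\sa$ contracting $\boule$ by a uniform factor $\theta'<1$. I would then look for $\psi_\sa=\id+u_\sa$ with $\psi_{\sigma(\sa)}\circ g_\sa=D_0f_\sa\circ\psi_\sa$, equivalently the fixed–point equation
\begin{equation*}
  u_\sa=(D_0f_\sa)^{-1}\bigl[R_\sa+u_{\sigma(\sa)}\circ g_\sa\bigr]=:\cF(u)_\sa .
\end{equation*}
On the graded space of $C^\infty$–families $(u_\sa)$ vanishing to order $N$ at $0$, with the weighted seminorms $\sup_{\sa,x}\|D^ju_\sa(x)\|/|x|^{\max(N-j,0)}$, the operator $\cF$ is a uniform contraction: the weight absorbs the expansion $\|(D_0f_\sa)^{-1}\|$ in the low–order jets since $g_\sa$ contracts, while in high order the chain–rule factor $(Dg_\sa)^{\otimes k}$ damps the leading term $D^ku_{\sigma(\sa)}\circ g_\sa$. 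The unique small fixed point $u$ is $C^\infty$, continuous in $\sa$, and depends tamely smoothly on $R$, hence on $f$. Setting $h_\sa^{\mathrm{loc}}:=\psi_\sa\circ\phi_\sa$ gives $D_0h_\sa^{\mathrm{loc}}=\id$, $h_\sa^{\mathrm{loc}}(0)=0$ and the conjugacy relation on $\boule$. Finally, since each $f_\sa$ contracts uniformly, $f_\sa^{(m)}(\B)\subset\boule$ for $m$ large, so I would \emph{define} $h_\sa:=(D_0f_\sa^{(m)})^{-1}\circ h_{\sigma^m(\sa)}^{\mathrm{loc}}\circ f_\sa^{(m)}$ on all of $\B$; the local conjugacy relation makes this independent of such $m$, it is a $C^\infty$–diffeomorphism of $\B$ onto its image satisfying \eqref{eqn:thm} on $\B$, and it still depends tamely smoothly on $f$. (Alternatively, one may run the contraction in Banach spaces with $N=N(k)$ adapted to a target regularity $C^k$ and invoke the uniqueness already proved to identify the resulting $h$'s and upgrade them to $C^\infty$.)

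\emph{Main obstacle.}
The small–divisor difficulty of the elliptic case does not occur here: $(H_3)$ makes all the cohomological equations uniformly hyperbolic, so they are solved by plain geometric series with no loss of derivatives. The real work is the \emph{tame bookkeeping} of the flat part — verifying that the contraction estimates for $\cF$, and the dependence of its fixed point on $f$, hold uniformly for $f$ in a neighbourhood in $\cU$ with the correct polynomial loss in the $C^m$–norms of $f$ and of $g_\sa$. This rests on tame estimates for the composition operator $u\mapsto u\circ g$ and for inversion of near–identity $C^\infty$–maps in the sense of \cite{Hamilton82}, together with care that the constants degrade controllably as the order of differentiation (and $N$) grows, which is exactly what ``tamely smooth'' requires here.
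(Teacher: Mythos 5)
Your outline follows the same strategy as the paper: (i) formal polynomial normalization, solving the degree-by-degree inhomogeneous cohomological equations over $(\sA,\sigma)$ by inverting uniformly hyperbolic transfer operators (the paper's \cref{lem:formal_no_param_smooth}); (ii) a flat-part conjugacy obtained as a fixed point of $T_f(h)_\sa=(D_0f_\sa)^{-1}\circ h_{\sigma(\sa)}\circ f_\sa$ on families vanishing to high order at $0$ (the paper's \cref{lem:lin_flat_smooth}); (iii) globalization from a small ball to $\B$ by iterating the conjugacy relation; (iv) uniqueness by the same two-step reduction. So the decomposition and the key operators are the ones the paper uses. Two points deserve to be sharpened.

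First, the claim that $\cF$ is a ``uniform contraction'' on the graded space with seminorms $\sup_{\sa,x}\|D^ju_\sa(x)\|/|x|^{\max(N-j,0)}$ is imprecise. By Fa\`a di Bruno, the estimate you actually get for $j>N$ has the form
$\|D^j T_f(h)\|_{0}\le C\|D^j h\|_{0}+M_j\sum_{k<j}\|f\|_j^{(j-k)/(j-1)}\|D^k h\|_{0}$
(cf.\ the paper's \eqref{eqn:contraction_higher_order}); the lower-order terms carry coefficients growing with $\|f\|_j$, so you do not have a contraction seminorm-by-seminorm. The safe formalization is exactly your parenthetical alternative — run the Banach fixed point with the single norm $\|D^r\cdot\|_0$ on $C^r$-families for each target regularity, then identify the solutions by the already-proved uniqueness to upgrade to $C^\infty$ — and that is precisely what the paper does (contraction in \cref{lem:lin_flat}, bootstrapping in the proof of \cref{thm:lin_no_param_smooth} from \cref{cor:lin_no_param}).

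Second, the tame dependence of the flat-part fixed point on $f$ is the crux, and your proof leaves it as ``the real work.'' One cannot invoke a smooth-contraction lemma here because the ambient space is Fr\'echet, not Banach. The paper handles this by exhibiting the inverse of $\id-T_f$ explicitly as the Neumann series $R_f=\sum_{m\ge0}T_f^m$, deriving the recursive tame estimate $\sum_m\|T^m_f(h)\|_j\le\tilde M_j(\|h\|_r\|f\|_j+\|h\|_j)$ (\cref{lem:r_well_defined}), proving $R$ is smooth tame, and then writing $h_f=\iota-R_f(\iota-T_f(\iota))$. ``Depends tamely smoothly on $R$, hence on $f$'' compresses this whole argument; as written it is a gap, not a step.
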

We will first prove this theorem without parameter dependence in \cref{sec:non_param}, and then in \cref{sec:dependence} the full theorem (after giving a proper definition of
tamely smooth maps).

\begin{remark}
  \label{rmk:h_r_4}
  Since we consider contracting diffeomorphisms, we have $0 < |\lambda_{\sa,i}| < 1$ for every $\sa \in \sA$.
  From that it follows that there is an $r \in \N$ such that: 
  \begin{enumerate}[$(H^r_4)$]
  \item  For any $1\le i\le n$  and any 
    multiindex $\mk$ with $  |\mk|\ge r$, we have $|\lambda_{\sa,i}| > |\lambda_{ \sa}^\mk|$ for all $\sa \in \sA$.
  \end{enumerate}
  Thus $(H_3)$ is an open condition among families satisfying $(H_1-H_2)$.
\end{remark}

As a consequence of the proof, we obtain the following finite regularity result:
\begin{coro}
  For $r \geq 2$, if $(f_\sa)_{\sa\in \sA}$ is a $C^0$-family of $C^r$-maps satisfying
  $(H_1-H_2-H_3-H^r_4)$,
  then  there exists a unique $C^0$-family $(h_\sa)_{\sa\in \sA}$ of
  $C^r$-diffeomorphisms from $\B$ into $\R^n$ such that 
  \begin{equation}
    D_0 h_{\sa} = \id\;, \quad h_\sa(0) = 0 \quad \text{and} \quad h_{\sigma(\sa)} \circ f_\sa(x) =D_0f_\sa\circ h_\sa(x), \quad x\in \B\; .
    \label{eqn:coro_finite_reg}
  \end{equation}
\label{cor:lin_no_param}
Moreover $h$ depends continuously on $f$.%
\end{coro}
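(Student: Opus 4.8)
This is the finite-regularity skeleton of \cref{thm:lin_no_param_smooth}, so my plan is to run the construction of that theorem while asking only for the derivatives that $(H^r_4)$ can afford. The construction proceeds in two stages: first a fibered polynomial change of coordinates of degree $<r$, built from $(H_1)$, $(H_2)$ and $(H_3)$, which pushes the nonlinear part of the skew product to order $r$; then a geometrically convergent limit, controlled by $(H^r_4)$, which produces the conjugacy. Uniqueness and continuous dependence are handled afterwards. I expect the $C^r$ --- rather than merely $C^0$ --- control of the limit in the second stage to be the only genuinely technical point.

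\smallskip
\noindent\emph{Stage 1: fibered polynomial normalization.} Working on a small closed ball $\B'\subset\B$ around $0$ and writing $\Lambda_\sa:=\diag\lambda_{\sa,i}$, I first produce a $C^0$-family $(g_\sa)$ of polynomial diffeomorphisms of degree $\le r-1$, each tangent to the identity at $0$, such that $\hat f_\sa:=g_{\sigma(\sa)}\circ f_\sa\circ g_\sa^{-1}$ has the form $\hat f_\sa(x)=\Lambda_\sa x+\hat R_\sa(x)$ with $\hat R_\sa(x)=O(\|x\|^{r})$ uniformly in $\sa$. The $g_\sa$ are built degree by degree: at degree $d$ with $2\le d\le r-1$ one must solve, for each $i$ and each multiindex $\mk$ with $|\mk|=d$, the twisted coboundary equation
\begin{equation*}
  \lambda_{\sa,i}\,p_{\sa,\mk,i}-\lambda_\sa^{\mk}\,p_{\sigma(\sa),\mk,i}=F_{\sa,\mk,i}
\end{equation*}
over $(\sA,\sigma)$, where $F$ collects the degree-$d$ coefficients left over after the lower-degree steps. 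By $(H_3)$, for a fixed pair $(\mk,i)$ the ratio $|\lambda_{\sa,i}|/|\lambda_\sa^{\mk}|$ stays uniformly on one side of $1$ as $\sa$ varies, so one solves this equation by summing its telescoping series forwards when $|\lambda_{\sa,i}|>|\lambda_\sa^{\mk}|$ and backwards when $|\lambda_{\sa,i}|<|\lambda_\sa^{\mk}|$; the series converges geometrically, and since $\sigma$ is continuous it yields a continuous $\sa\mapsto p_{\sa,\mk,i}$. After shrinking $\B'$ so that each $g_\sa$ is a diffeomorphism with uniformly bounded inverse, $(\hat f_\sa)$ is again a $C^0$-family of $C^r$ self-maps of $\B'$ contracting towards $0$, with uniform bounds on $\hat R_\sa$ and its first $r$ derivatives.

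\smallskip
\noindent\emph{Stage 2: the contracting limit.} With $\hat f^m_\sa:=\hat f_{\sigma^{m-1}(\sa)}\circ\cdots\circ\hat f_\sa$, $\Lambda^m_\sa:=\Lambda_{\sigma^{m-1}(\sa)}\cdots\Lambda_\sa$, and likewise $f^m_\sa$ for the original family, I would set
\begin{equation*}
  H_\sa:=\lim_{m\to\infty}(\Lambda^m_\sa)^{-1}\circ\hat f^m_\sa .
\end{equation*}
A short computation shows the difference of two consecutive terms equals $(\Lambda^{m+1}_\sa)^{-1}\circ\hat R_{\sigma^m(\sa)}\circ\hat f^m_\sa$. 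By the chain rule and $\|D\hat f_\sa\|<1$, an induction on $k\le r$ gives that along a $\sigma$-orbit $\|\hat f^m_\sa\|_{C^r}$ contracts to $0$ like a product of $m$ factors each below $1$ and arbitrarily close (after shrinking $\B'$) to $\max_i|\lambda_{\sigma^j(\sa),i}|$; combining this with $\hat R_\sa=O(\|x\|^r)$ and $\|(\Lambda^{m+1}_\sa)^{-1}\|\le\prod_{j=0}^{m}\bigl(\min_i|\lambda_{\sigma^j(\sa),i}|\bigr)^{-1}$, the $C^r$-norm of that difference is bounded by a constant times $\prod_{j<m}\eta_{\sigma^j(\sa)}$, where $\eta_\sa$ is a quantity slightly larger than $(\max_i|\lambda_{\sa,i}|)^{r}/\min_i|\lambda_{\sa,i}|$. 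Since $(H^r_4)$ forces $\min_i|\lambda_{\sa,i}|>(\max_i|\lambda_{\sa,i}|)^{r}$ for every $\sa$, one has $\eta_\sa<1$ once $\B'$ is small enough, hence $\eta_\sa\le\kappa<1$ uniformly by compactness of $\sA$, and the series converges geometrically in $C^r$ on $\B'$. Thus $(H_\sa)$ is a $C^0$-family of $C^r$ maps with $D_0H_\sa=\id$ and $H_\sa(0)=0$, and the definition yields both $H_{\sigma(\sa)}\circ\hat f_\sa=\Lambda_\sa\circ H_\sa$ and $H_\sa=(\Lambda^m_\sa)^{-1}\circ H_{\sigma^m(\sa)}\circ\hat f^m_\sa$. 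Applying the latter for $m$ so large that $\hat f^m_\sa(\B')$ lies in the ball on which $H_{\sigma^m(\sa)}$ is $C^1$-close to the identity (uniform contraction, plus uniform continuity of $(\sa,x)\mapsto D_xH_\sa$) shows $H_\sa$ is an injective immersion, i.e.\ a $C^r$-diffeomorphism onto its image. Then $h_\sa:=H_\sa\circ g_\sa$ satisfies $h_{\sigma(\sa)}\circ f_\sa=\Lambda_\sa\circ h_\sa$, $D_0h_\sa=\id$ and $h_\sa(0)=0$ on $\B'$, and I extend it to all of $\B$ by $h_\sa:=(\Lambda^N_\sa)^{-1}\circ h_{\sigma^N(\sa)}\circ f^N_\sa$ for a uniform $N$ with $f^N_\sa(\B)\subset\B'$, which keeps all required properties.

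\smallskip
\noindent\emph{Uniqueness and continuity.} If $(h_\sa)$ and $(h'_\sa)$ both satisfy \eqref{eqn:coro_finite_reg}, then on a common small ball $\phi_\sa:=h'_\sa\circ h_\sa^{-1}$ is a $C^0$-family of $C^r$ maps, tangent to the identity at $0$, with $\phi_{\sigma(\sa)}\circ\Lambda_\sa=\Lambda_\sa\circ\phi_\sa$. Comparing degree-$d$ Taylor coefficients for $2\le d\le r-1$ gives $\lambda_{\sa,i}\,\phi_{\sa,\mk,i}=\lambda_\sa^{\mk}\,\phi_{\sigma(\sa),\mk,i}$, and the uniform bound on these coefficients together with the sign-definiteness in $(H_3)$ forces $\phi_{\sa,\mk,i}=0$ --- otherwise its norm would grow geometrically along the $\sigma$-orbit, forwards in the ``fast'' case $|\lambda_{\sa,i}|>|\lambda_\sa^{\mk}|$ and backwards in the ``slow'' case. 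Hence $\phi_\sa(x)=x+O(\|x\|^r)$ uniformly, and $\phi_\sa=(\Lambda^m_\sa)^{-1}\circ\phi_{\sigma^m(\sa)}\circ\Lambda^m_\sa$ together with the geometric estimate above forces $\phi_\sa=\id$ near $0$; propagating through the conjugacy equation and $f^N_\sa(\B)\subset\B'$ gives $h_\sa=h'_\sa$ on $\B$. Finally, $f\mapsto h$ is continuous because every ingredient above depends continuously on $f$ with rates that only involve the uniform constants in play, which stay locally uniform since $(H^r_4)$ is open by \cref{rmk:h_r_4}: the jets of $g_\sa$ are continuous in the jets of $f_\sa$, the coboundary series and the limit defining $H$ converge at a uniform geometric rate, and composition is continuous for $C^0$-families of $C^r$ maps. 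As anticipated, the single delicate step is the $C^r$-convergence in Stage 2; it rests on the chain-rule estimate $\|\hat f^m_\sa\|_{C^r}\to0$ geometrically along orbits, and it is exactly there --- once the order-$r$ vanishing of $\hat R_\sa$ has been secured in Stage 1 --- that the precise exponent of $(H^r_4)$ gets used up.
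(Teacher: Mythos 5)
Your proof is correct and follows essentially the same route as the paper's: the same split into a fibered polynomial (jet) normalization driven by $(H_3)$ and a $C^r$-contraction argument for the $r$-flat remainder driven by $(H^r_4)$, with your telescoping series and the limit $\lim_m(\Lambda^m_\sa)^{-1}\circ\hat f^m_\sa$ being exactly the unwound Picard iterations of the paper's contracting operators $O_{\mk,i}$ and $T_f(h)=(D_0f_\sa)^{-1}\circ h_{\sigma(\sa)}\circ f_\sa$ started at the canonical inclusion. The only cosmetic deviation is that you normalize only up to degree $r-1$ (remainder $O(\|x\|^r)$ rather than the paper's $o(x^r)$), which still suffices both for the corollary's conclusion and for your uniqueness argument.
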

Let us now consider the case of holomorphic families. In this setting $\tilde \B$ is the \emph{open} unit ball around 0 in $\C$
and $f_{\sa}$ is a holomorphic map from $\tilde \B$ into $\C^n$. Observe that $(H_1-H_2-H_3)$ also make sense in this setting. We have the
following holomorphic counterpart of \cref{thm:lin_no_param_smooth}:
\begin{theo}\label{coro ana}
  If $(f_\sa)_{\sa\in \sA}$ is a $C^0$-family of holomorphic maps satisfying $(H_1-H_2-H_3)$ and it is a family of contracting biholomorphisms from $\tilde \B$ into $\tilde \B$, then  there exists a unique $C^0$-family $(h_\sa)_{\sa\in \sA}$ of biholomorphic maps from $\tilde \B$ into $\C^n$ such that 
  \begin{equation}
    D_0 h_{\sa} = \id\;, \quad h_\sa(0) = 0 \quad \text{and} \quad h_{\sigma(\sa)} \circ f_\sa(x) =D_0f_\sa\circ h_\sa(x), \quad x\in \tilde \B\; .
    \label{eqn:coro_holomorphic}
  \end{equation}
\label{cor:lin_holomorphic}
Moreover, $(h_{\sa})_{\sa}$ depends holomorphically on $f$.
\end{theo}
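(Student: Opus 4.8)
The plan is to transpose the proof of \cref{thm:lin_no_param_smooth} from the tame Fr\'echet category to the Banach category of bounded holomorphic maps, where it is lighter: holomorphic dependence on $f$ comes from the holomorphic implicit function theorem for Banach spaces once the conjugacy is written as a contracting fixed point. Fix $\rho\in(0,1)$ and let $\mathcal{E}_\rho$ be the complex Banach space of $C^0$-families $(\phi_\sa)_{\sa\in\sA}$ of bounded holomorphic maps $\tilde\B_\rho\to\C^n$ with $\phi_\sa(0)=0$, $D_0\phi_\sa=0$, with the supremum norm. Writing $h_\sa=\id+\phi_\sa$, $\Lambda_\sa=D_0f_\sa=\diag\lambda_{\sa,i}$, $f_\sa=\Lambda_\sa+g_\sa$, and $\Lambda^{(k)}_\sa:=\Lambda_{\sigma^{k-1}(\sa)}\cdots\Lambda_\sa$, $f^{(k)}_\sa:=f_{\sigma^{k-1}(\sa)}\circ\cdots\circ f_\sa$, relation \eqref{eqn:coro_holomorphic} is equivalent to the cohomological equation
\begin{equation*}
  \Lambda_\sa\,\phi_\sa-\phi_{\sigma(\sa)}\circ f_\sa=g_\sa,\qquad\sa\in\sA .
\end{equation*}

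\emph{Step 1: finite normal form.} By \cref{rmk:h_r_4} there is $r\in\N$ with $(H^r_4)$. Since $(H_3)$ provides no resonances and a sign of $|\lambda_{\sa,i}|-|\lambda_\sa^\mk|$ independent of $\sa$, we conjugate $f$ by a $C^0$-family of polynomial diffeomorphisms tangent to the identity — determined by, hence polynomial in, finitely many jets of $f$ at $0$ — to cancel all homogeneous terms of $g_\sa$ of degrees $2,\dots,r-1$: at each degree the obstruction is killed by inverting $\id-A$ for a weighted composition operator $A$ over $\sigma$, which is invertible because for each relevant pair $(i,\mk)$ either $|\lambda_{\sa,i}|>|\lambda_\sa^\mk|$ for all $\sa$ or $|\lambda_{\sa,i}|<|\lambda_\sa^\mk|$ for all $\sa$, so $\rho(A)<1$ or $\rho(A^{-1})<1$. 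After this reduction $g_\sa$ vanishes at $0$ to order $\ge r$, and then so does any solution $\phi$ of the cohomological equation.

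\emph{Step 2: contracting fixed point.} On the closed subspace $\mathcal{E}_\rho^{\ge r}\subset\mathcal{E}_\rho$ of families vanishing to order $\ge r$, and for $\rho$ chosen small enough (uniformly in $\sa$, using compactness of $\sA$) that each $f_\sa$ contracts $\tilde\B_\rho$ into itself, the cohomological equation becomes $\phi=\mathcal{T}_f(\phi)$ for the operator $\mathcal{T}_f(\phi)_\sa:=\Lambda_\sa^{-1}g_\sa+\Lambda_\sa^{-1}\,\phi_{\sigma(\sa)}\circ f_\sa$, whose solution is also given by $h_\sa=\lim_{k\to\infty}(\Lambda^{(k)}_\sa)^{-1}\circ f^{(k)}_\sa$; the difference of consecutive terms involves $g_{\sigma^k(\sa)}$ (order $\ge r$) pushed through $(\Lambda^{(k)}_\sa)^{-1}$, hence is $O(\theta^k)$ with $\theta:=\sup_{\sa,\,i,\,|\mk|\ge r}|\lambda_\sa^\mk|/|\lambda_{\sa,i}|<1$ by $(H^r_4)$, so $\mathcal{T}_f$ is a uniform contraction in $\phi$ on a small ball and is holomorphic in $(f,\phi)$, composition of bounded holomorphic maps being a holomorphic operation between the corresponding Banach spaces. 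Each $h_\sa$ is injective on $\tilde\B_\rho$, hence biholomorphic onto its image by the holomorphic inverse function theorem, since $D_0h_\sa=\id$ and $h_\sa=(\Lambda^{(k)}_\sa)^{-1}\circ h_{\sigma^k(\sa)}\circ f^{(k)}_\sa$ with $f^{(k)}_\sa$ landing in the ball on which $h_{\sigma^k(\sa)}$ is injective. Uniqueness is the standard rigidity argument: two solutions $h,h'$ give $u_\sa:=h_\sa\circ(h'_\sa)^{-1}$ with $u_{\sigma(\sa)}\circ\Lambda_\sa=\Lambda_\sa\circ u_\sa$ near $0$, $u_\sa(0)=0$, $D_0u_\sa=\id$, and $(H_3)$ forces $u_\sa=\id$ degree by degree.

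\emph{Step 3: globalization, dependence, and the main difficulty.} As each $f_\sa$ is a contraction of $\tilde\B$ fixing $0$, the forward orbit of every $x\in\tilde\B$ enters $\tilde\B_\rho$, so the identity $h_\sa=(\Lambda^{(k)}_\sa)^{-1}\circ h_{\sigma^k(\sa)}\circ f^{(k)}_\sa$, valid on $(f^{(k)}_\sa)^{-1}(\tilde\B_\rho)$, extends $(h_\sa)$ consistently to a $C^0$-family of biholomorphisms $\tilde\B\to\C^n$ still solving \eqref{eqn:coro_holomorphic}, once the polynomial conjugacy of Step 1 is undone. For the dependence, $\mathcal{T}_f$ is holomorphic in $f$ on an open $\cU$ and a contraction in $\phi$ with rate uniform on a ball, so the holomorphic implicit function theorem in Banach spaces makes $f\mapsto\phi_f$, hence $f\mapsto h_f$ on $\tilde\B_\rho$, holomorphic; the globalization formula and the Step 1 conjugacy, both holomorphic in $f$, promote this to holomorphy of $f\mapsto h_f$ on $\tilde\B$. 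The genuinely delicate part is neither the contraction estimate nor the holomorphy but the bookkeeping required to keep every object inside its domain through the infinitely many compositions and the polynomial changes of coordinates — fixing $\rho$ so that the self-map property and injectivity hold, uniformly in $\sa\in\sA$ and holomorphically in $f$ — after which the remaining steps mirror the smooth proof.
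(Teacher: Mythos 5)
Your proposal is correct and follows the same overall architecture as the paper's proof: (1) a finite polynomial normal form using the non-resonance hypothesis $(H_3)$, (2) a fixed-point argument for the $r$-flat remainder, and (3) globalization from a small ball to $\tilde\B$ via the cocycle relation, with holomorphic dependence coming from composition-is-holomorphic plus holomorphic dependence of fixed points of parameterized contractions (the paper's \cref{lem:comp_holom} and \cref{lem:smooth_contraction}).

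Where you genuinely diverge is in Step 2. The paper transplants the smooth argument essentially verbatim: it works on the space of families with bounded $\|D^{r}g\|$, equips the $r$-flat subspace $\tilde V_\delta$ with the $C^r$-seminorm $\|g\|_V=\|D^r g\|_0$, and proves contraction via Taylor--Lagrange interpolation and Fa\`a di Bruno, exactly as in \cref{lem:lin_flat}. You instead take the Banach space of bounded holomorphic maps with the sup norm, and obtain the contraction (or rather $\|L_f^k\|=O(\theta^k)$ with $\theta<1$) via the Schwarz lemma / telescoping-orbit estimate: the order-$\geq r+1$ vanishing of $\phi$ at $0$ turns the contraction $|f^{(k)}_\sa(x)|\lesssim\prod_j((1+\epsilon)\Lambda_{\sigma^j(\sa)})|x|$ into a high power that beats $\|(\Lambda^{(k)}_\sa)^{-1}\|$, pointwise along the orbit. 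This is cleaner in the holomorphic category — no Fa\`a di Bruno, and the sup norm is the one the paper itself uses for the Banach space $\cal B$ in the introduction — whereas the paper's choice has the virtue of making the holomorphic proof a near copy of the smooth one. Both are correct, and both rely on $(H^r_4)$ and compactness of $\sA$ for the uniform bound on the pointwise ratio $\mu_\sa^{-1}\Lambda_\sa^{r+1}$.

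Two small remarks. First, the parenthetical ``determined by, hence polynomial in, finitely many jets of $f$'' in Step 1 overstates: the degree-$j$ corrector solves $q-Aq=b$, whose solution $\sum_{m\geq 0}A^m b$ depends holomorphically but not polynomially on the jet data. This does not affect the conclusion (holomorphy is all you need), but the word ``polynomial'' should be dropped. Second, for the uniqueness it is worth saying explicitly why the degree-by-degree rigidity of $u_\sa=h_\sa\circ(h'_\sa)^{-1}$ already finishes the job in the holomorphic category: a holomorphic germ with identity Taylor series is the identity near $0$, and then identity on all of $\tilde\B$ by the extension relation; in the smooth case this last step requires the separate flat-uniqueness lemma, which the paper makes a point of, whereas here you can bypass it.
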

We will show this in \cref{sec:holomorphic}.
\begin{remark}
  If each of the $(f_\sa)$ are real analytic (i.e.  $f_{\sa}(\tilde \B \cap \R^n) \subset \R^n$), then also the maps $(h_{\sa})$ are real analytic.
  \label{rem:lin_real_analytic}
\end{remark}
\begin{remark}
  The above results are also valid for non-trivial bundles over $\sA$:
  instead of
  considering a map on the product $\sA \times \B$, we could consider a map on a ball bundle over $\sA$. For the ease of exposition,
  the proof is only done in the case of a trivial bundle.
\end{remark}
\subsection{Applications}
The application which motivated this work regards the linearization of expanding compact sets. %

Let $M$ be a Riemannian manifold of dimension $n$ and let $g$ be a $C^1$-self-map of $M$.
We recall a compact subset $K\subset M$ is   \emph{invariant} by $g$ if $g(K)\subset K$. It is \emph{expanding} if furthermore:
\[\|D_xg(u)\|> 1 \;, \quad \forall  x\in K\text{ and unit vector } u \in T_x M  \; .\]

Note that by compactness of $K$, the latter inequality is uniform. 
Expanding compact subsets are  \emph{structurally stable} (see  \cite{Shub_1969} or \cite[Thm 0.1]{berger_persistence_2010}).  This means that for every   $C^1$-perturbation $\tilde g$ of $g$,   there is a unique embedding $\psi_{\tilde g}  \colon K \hookrightarrow M$ that is $C^0$-close to the canonical inclusion $K\hookrightarrow M$ and   such that
  \[ \psi_{\tilde g} \circ g|K = \tilde g\circ \psi_{\tilde g}\; . \]
The set  $K_{\tilde g}=  \psi_{\tilde g} (K)$ is called the \emph{hyperbolic continuation} of $K$; it is   expanding for $\tilde g $.  

A sub-bundle $E\subset TM|K$ is \emph{$Dg$-invariant} if $Dg(E_x) = E_{g(x)}$ for every $x\in K$. Then $Dg$ induces a map denoted $D[g]$ on the quotient $TM/E$. We say that the expanding compact set $K$ is \emph{projectively hyperbolic} at $E$ if: 
\[ \|D_x[g](v)\| >\| D_xg(u)\| \quad \forall x\in K\;, \quad u\in E_x\;, v\in T_xM/E_x,\; \text{such that } \|u\|=1=\|v\|\; .\]
 Note again that by compactness of $K$, the latter inequality is uniform.  We say that the expanding compact set $K$ is \emph{totally projectively hyperbolic} if there exists a flag of invariant sub-bundles:
 \[\{0\}= E_0 \subsetneq E_1 \subsetneq \cdots \subsetneq E_{n-1}\subsetneq E_n=TM|K\]
 such that for every $1\le i<n$,  the set $K$ is \emph{projectively hyperbolic} at $E_i$. Using cones, one shows that totally  projective hyperbolicity is a $C^1$-open property: for $C^1$-small perturbations $\tilde g$ of $g$,  the hyperbolic continuation of $K$ is also totally projective hyperbolic. Furthermore, this property enables to diagonalize the differential. To this end we consider the \emph{inverse limit $\Inv{K}$ of $K$}: 
 \[\Inv{K} \coloneqq \left\{ \sa=(\sa_i)_{i \in \Z } \in K^\Z \colon g(\sa_{i}) = \sa_{i+1} \text{ for all } i \in \Z \right\}\; ,\] 
on which   $g$ induces a homeomorphism:
\[ \Inv g: \sa=(\sa_i)_{i \in \Z }\in \Inv{K} \mapsto (\sa_{i+1} )_{i \in \Z }\in \Inv{K}\;  . \]
Using cones, the total projective hyperbolicity implies the existence of  a unique splitting $F_{1\sa}\oplus F_{2\sa}\oplus \cdots \oplus F_{n\sa}=T_{\sa_0} M$  formed by invariant line bundles that is adapted to the flag: 
\[D_{\sa_0} g (F_{i\sa}) =F_{i\Inv g(\sa)} \;, \quad E_{i\sa_0} = F_{1\sa} \oplus \dots \oplus F_{i\sa} \quad \forall \sa \in \Inv K\ .\]
One easily shows that this splitting depends continuously on $\sa$. %
Here is a consequence of the main theorem: 
\begin{coro}
  Let $K$ be an expanding Cantor set for a self-map $g\in C^\infty(M,M)$. Assume that:
\begin{enumerate}[(a)]
\item the set  $K$ is totally projectively hyperbolic with invariant splitting 
 $F_{1}\oplus F_{2}\oplus \cdots \oplus F_{n }\to  \Inv K $.
\item    for any $1\le i\le n$  and any   multiindex $\mk= (k_1,\dots, k_n) $ with $|\mk|\neq 1$, it holds
  $\| D_{\sa_0}g| F_i\|  \not= \prod_{\ell =1} ^n \| D_{\sa_0}g| F_{\ell}\|^{k_\ell}$ 
  and the sign of the difference does not depend on $\sa\in  \Inv K $. 
\end{enumerate} 
 Then for $r>0$ sufficiently small,   there is a unique continuous family   $(\varphi_{\sa})_{\sa \in \Inv{K}}$ of $C^\infty$-charts 
 $\varphi_{\sa}$ from the closed $r$-ball  $ B_{\sa_0} (r)$ of   $T_{\sa_0} M $ onto a neighborhood of  $\sa_0\in M$  such that:
 \begin{equation}
    \varphi_{\sa}(0)=\sa_0,\quad D_0 \varphi_{\sa}  =\id\qand 
    \varphi_{\arr g (\sa)} \circ D_{\sa_0} g  =  g \circ  \varphi_{ \sa } \quad \text{on } (D_{\sa_0} g)^{-1} B_{\sa_0} (r)\; .
   \label{eqn:cor_app}
 \end{equation}
  \label{cor:application}
\end{coro}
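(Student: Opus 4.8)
\emph{Proof idea.} The plan is to deduce \cref{cor:application} from \cref{thm:lin_no_param_smooth} applied, over the inverse limit $\Inv K$, to the skew product built from the local \emph{inverse} branches of $g$: these are contracting precisely because $g$ is uniformly expanding on $K$, and the invariant line bundles $F_i$ provide the diagonalization. First I would set up linear charts. The space $\Inv K$ is compact, metrizable and totally disconnected, hence every continuous line bundle over it is trivial; in particular each $F_i\to\Inv K$ admits a continuous unit section $\sa\mapsto u_{i\sa}\in F_{i\sa}$, and, since $\bigoplus_i F_{i\sa}=T_{\sa_0}M$, the rule $\Phi_\sa(u_{i\sa})=e_i$ defines a continuous family of linear isomorphisms $\Phi_\sa\colon T_{\sa_0}M\to\R^n$ carrying the flag to the coordinate flag. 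Fixing also the Riemannian exponential charts $\exp_{\sa_0}$ (diffeomorphisms on $B_{\sa_0}(\rho)$ for $\rho$ uniformly small, with $D_0\exp_{\sa_0}=\id$) and a small uniform $\epsilon>0$, I put $\Psi_\sa\colon\B\to M$, $\Psi_\sa(y):=\exp_{\sa_0}(\epsilon\Phi_\sa^{-1}(y))$, so $\Psi_\sa(0)=\sa_0$ and $D_0\Psi_\sa=\epsilon\Phi_\sa^{-1}$.

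\emph{The skew product of contractions.} Let $\sigma:=\arr g^{\,-1}$, so $\sigma(\sa)_0=\sa_{-1}$. As $g$ is expanding, $D_xg$ is invertible for $x\in K$, so $g$ is a local diffeomorphism near each point of $K$; by compactness there is a uniform $\delta_0>0$ such that for every $\sa$ the map $g$ restricts to a diffeomorphism from the $\delta_0$-ball at $\sa_{-1}$ onto a set containing the $\delta_0$-ball at $\sa_0$. Let $G_\sa$ be the inverse branch, $G_\sa(\sa_0)=\sa_{-1}$, which satisfies $\|D_xG_\sa\|\le\theta<1$ uniformly. Put $f_\sa:=\Psi_{\sigma(\sa)}^{-1}\circ G_\sa\circ\Psi_\sa$. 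Since $D_0\exp=\id$ and $D_{\sa_0}G_\sa=(D_{\sa_{-1}}g)^{-1}$, the factors $\epsilon$ cancel and $D_0f_\sa=\Phi_{\sigma(\sa)}\circ(D_{\sa_{-1}}g)^{-1}\circ\Phi_\sa^{-1}$; as $D_{\sa_{-1}}g$ carries $F_{i\sigma(\sa)}$ onto $F_{i\sa}$ while $\Phi$ carries $F_i$ to $\R e_i$, this matrix is diagonal, $D_0f_\sa=\diag(\lambda_{\sa,i})$, with $|\lambda_{\sa,i}|=\|D_{\sigma(\sa)_0}g|F_i\|^{-1}\in(0,1)$ by expansion --- this is $(H_2)$. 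Being a diagonal matrix, $D_0f_\sa$ has operator norm $\max_i|\lambda_{\sa,i}|\le\theta_0<1$ uniformly, while the scaling built into $\Psi_\sa$ makes $\sup_\B\|D^2f_\sa\|=O(\epsilon)$ uniformly in $\sa$ (the family being $C^0$ in $\sa$); hence, for $\epsilon$ small enough, each $f_\sa$ is a $C^\infty$-diffeomorphism of $\B$ into $\B$ fixing $0$ with $\|f_\sa(y)\|\le\theta_1\|y\|$, $\theta_1<1$ --- this is $(H_1)$. For $(H_3)$, write $\sa':=\sigma(\sa)$; for $|\mk|\neq1$ one has $|\lambda_{\sa,i}|=|\lambda_\sa^\mk|\iff\|D_{\sa'_0}g|F_i\|=\prod_{\ell=1}^n\|D_{\sa'_0}g|F_\ell\|^{k_\ell}$, and $\operatorname{sign}(|\lambda_{\sa,i}|-|\lambda_\sa^\mk|)=\operatorname{sign}(\prod_\ell\|D_{\sa'_0}g|F_\ell\|^{k_\ell}-\|D_{\sa'_0}g|F_i\|)$; since $\sa\mapsto\sigma(\sa)$ is a bijection of $\Inv K$, hypothesis (b) is exactly $(H_3)$ (and, the $|\lambda_{\sa,i}|$ being also bounded below, $(H^r_4)$ holds for a suitable $r$, though this is unnecessary as $g\in C^\infty$).

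\emph{Linearization and translating back.} I would then apply \cref{thm:lin_no_param_smooth}, obtaining a unique $C^0$-family $(h_\sa)$ of $C^\infty$-diffeomorphisms $\B\to\R^n$ with $h_\sa(0)=0$, $D_0h_\sa=\id$, and $h_{\sigma(\sa)}\circ f_\sa=D_0f_\sa\circ h_\sa$, and set, for $r>0$ uniformly small,
\[
  \varphi_\sa\colon B_{\sa_0}(r)\to M,\qquad \varphi_\sa(x):=\Psi_\sa\bigl(h_\sa^{-1}(\epsilon^{-1}\Phi_\sa(x))\bigr).
\]
Then $\varphi_\sa(0)=\sa_0$, the dilations cancel, so $D_0\varphi_\sa=\Phi_\sa^{-1}\circ\Phi_\sa=\id$, and $(\varphi_\sa)$ is a continuous family of $C^\infty$-diffeomorphisms onto neighbourhoods of $\sa_0$. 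By construction $f_\sa=\Psi_{\sigma(\sa)}^{-1}\circ G_\sa\circ\Psi_\sa$, so (using $\sigma\circ\arr g=\id$, and that $G_{\arr g(\sa)}$ is the inverse of the branch of $g$ at $\sa_0$) $f_{\arr g(\sa)}^{-1}=\Psi_{\arr g(\sa)}^{-1}\circ g\circ\Psi_\sa$ near $\sa_0$. Combining this with $f_{\arr g(\sa)}^{-1}=h_{\arr g(\sa)}^{-1}\circ(D_0f_{\arr g(\sa)})^{-1}\circ h_\sa$ (from the conjugacy at $\arr g(\sa)$, noting $\sigma(\arr g(\sa))=\sa$) and $(D_0f_{\arr g(\sa)})^{-1}=\Phi_{\arr g(\sa)}\circ D_{\sa_0}g\circ\Phi_\sa^{-1}$ (from $(H_2)$ with $\sigma(\arr g(\sa))=\sa$, $(\arr g(\sa))_{-1}=\sa_0$), a direct computation gives $\varphi_{\arr g(\sa)}^{-1}\circ g\circ\varphi_\sa=D_{\sa_0}g$, i.e. \eqref{eqn:cor_app} on $(D_{\sa_0}g)^{-1}B_{\sa_0}(r)$ (which lies in $B_{\sa_0}(r)$ as $D_{\sa_0}g$ expands). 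Uniqueness would follow from the one in \cref{thm:lin_no_param_smooth}: inverting the correspondence $\varphi_\sa\leftrightarrow h_\sa$ turns any admissible family $(\varphi_\sa)$ into an admissible family $(h_\sa)$, so it agrees with the above near $0$, hence on all of $B_{\sa_0}(r)$ by propagating \eqref{eqn:cor_app} with the contracting branches $G$.

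\emph{Main obstacle.} The difficulty is not in any single estimate but in arranging the data over the inverse limit --- constructing the diagonalizing frame $\Phi_\sa$ continuously (this is where zero-dimensionality of $\Inv K$ is used, to trivialize the $F_i$) and verifying that the \emph{intrinsic} non-resonance condition (b) is precisely $(H_3)$, i.e. that $|\lambda_{\sa,i}|$ is frame-independent, which holds because $|\lambda_{\sa,i}|=\|D_{\sigma(\sa)_0}g|F_i\|^{-1}$. Checking that $f_\sa$ genuinely maps $\B$ into $\B$ (which, despite possible distortion of $\Phi_\sa$, works because $D_0f_\sa$ is a \emph{diagonal} contraction and the remainder is $O(\epsilon)$ after the rescaling) and the attendant domain bookkeeping for \eqref{eqn:cor_app} are the routine technical points.
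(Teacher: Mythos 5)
Your proposal is correct and follows essentially the same route as the paper: trivialize the splitting over the zero-dimensional $\Inv K$ to get diagonalizing linear frames, rescale local charts at $\sa_0$ so that the inverse branches of $g$ define a skew product of contractions over $\sigma=\arr g^{\,-1}$ satisfying $(H_1$--$H_3)$, apply \cref{thm:lin_no_param_smooth}, and conjugate back (the paper uses a chart of a Euclidean neighborhood of the Cantor set $K$ where you use the Riemannian exponential, an immaterial difference). Your verification that hypothesis (b) translates into $(H_3)$ via $|\lambda_{\sa,i}|=\|D_{\sigma(\sa)_0}g|F_i\|^{-1}$, and the uniqueness argument, match the paper's.
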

\begin{remark}
  We will show in \cref{cor:application_refined} of \cref{sec:dependence_application} that the resulting linearization depends \emph{smooth tamely} on $g$.
  In \cref{cor:application_holomorphic} of \cref{sec:holomorphic_application}, we state the holomorphic counterpart of this corollary.
  \label{rem:application_forward}
\end{remark}
\begin{proof}[Proof of \cref{cor:application}]
We are going to  apply \cref{thm:lin_no_param_smooth}  using inverse branches of $g$ to define the fiber dynamics of a  skew product over  $\sA:= \Inv{K}$ endowed with the dynamics $\sigma$ equal to   the inverse of $\Inv g$: 
\[\sigma:  \sa=(\sa_i)_{i \in \Z }\in \sA= \Inv{K} \mapsto (\sa_{i-1} )_{i \in \Z }\in \sA= \Inv{K} \; . \] 

  Since $K$ is a Cantor set, it has a neighborhood  $U$ which is diffeomorphic to an open subset of $\R^n$. Likewise $\Inv K$ is a Cantor set, so the splitting   $F_{1}\oplus F_{2}\oplus \cdots \oplus F_{n }\to  \Inv K $ is trivial. This implies that that there is a continuous family of charts $(\Phi_\sa)_{\sa \in \Inv K}$ from $\B$ into $M$ and such that:
  \begin{itemize}
  \item $\Phi_\sa(0)=\sa_0$,
   \item $D_0\Phi_\sa$ sends the splitting $\R\oplus \cdots \oplus \R=\R^n$ to   $F_{1\sa}\oplus F_{2\sa}\oplus \cdots \oplus F_{n \sa}$ and  $\|D_0\Phi_\sa(e_i)\|$ is constant over $\sa \in \sA$ for every vector $e_i$ in the canonical basis of $\R^n$. 
   \end{itemize}  
   Note that in general, $D_0\Phi_\sa$ will not be an isometry from $\R^n$ with the standard length to $T_{\sa_0} M$. However, with these conditions we have
   that $D_0(\Phi_{ \sa }^{-1} \circ g\circ  \Phi_{\sigma(\sa)})$ is a diagonal matrix whose entries have modulus $\| D_{\sigma(\sa_0)}g| F_i\|$.
   In particular, since $K$ is expanding, all these entries have modulus uniformly greater than 1. We shall consider the inverse of these maps to obtain contractions as stated in $(H_1)$. 
   To this end,  we consider $\psi_\sa: x\in \B\mapsto \Phi_\sa (s \cdot x)$, with   $s>0$ sufficiently small so that $g| \psi_{\sigma(\sa)}(\B)$ is an expanding diffeomorphism onto its image which contains
     $ \psi_{\sa}(\B)$. Then the following map is well-defined:
\[f_\sa:=   (\psi_{ \sa }^{-1} \circ g\circ  \psi_{\sigma(\sa)})^{-1} |\B\; .\]
By our construction, $f_\sa$ fixes 0 and has diagonal differential whose entries have modulus $\|D_{\sigma(\sa_0)}g| F_i\|^{-1}$. For $s>0$ sufficiently small enough, it follows that $f_\sa$ is contracting, so $f_{\sa}$ satisfies $(H_1-H_2)$ of \cref{thm:lin_no_param_smooth}.
Moreover $(H_3)$ follows from hypothesis $(b)$.
Applying \cref{thm:lin_no_param_smooth}, let $(h_{\sa})_{\sa\in \sA}$ be the continuous family of $C^\infty$-diffeomorphism $h_\sa: \B \to \R^n$ such that 
$ h_{\sigma (\sa)}\circ  f_\sa = D_0f_\sa\circ   h_{\sa} $ on $\B$. 
Then $\varphi_{\sa} =  \psi_{a} \circ h^{-1}_a \circ D_0(\psi_a)^{-1}$ satisfies \eqref{eqn:cor_app} on some neighborhood of the zero section of $TM|\Inv{K}$.
In particular, for $r > 0$ sufficiently small, we have that $\varphi_{\sa}$ is defined on $B_{\sa_{0}}(r)$ for all $\sa \in \Inv K$.
The uniqueness of $\varphi$ follows from the uniqueness of $h$.
 \end{proof}

\begin{remark}
  If $f$ is real analytic we can also arrange the charts $\phi_{\sa}$ to be real analytic, this follows from the holomorphic setting below.
  \label{rem:application_real_analytic}
\end{remark}

\section{Proof of main theorem without parameter dependence}
\label{sec:non_param}
The proof of \cref{thm:lin_no_param_smooth} without parameter dependence follows from two lemmas:
 \begin{lemma}
   For $r\geq 2$ and every continuous family of $C^\infty$-maps $(f_{\sa})_{\sa \in \sA}$ satisfying $(H_1-H_2-H_3)$, there exists a unique $C^0$-family of polynomial maps $(h_\sa)_{\sa\in \sA}$ of $\R^n$ with degree $\leq r$ such that for every $\sa\in \sA$:
   \begin{equation}
     h_{\sa}(0) = 0\;, \quad D_0 h_{\sa}= \id \quad \text{and} \quad h_{\sigma(\sa)} \circ f_\sa(x)= D_0f_\sa \circ  h_\sa(x)+o(x^r)\; .
     \label{eqn:formal_smooth}
   \end{equation}
\label{lem:formal_no_param_smooth}
\end{lemma}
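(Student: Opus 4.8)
The plan is to build the coefficients of the polynomial maps $h_\sa$ one homogeneous degree at a time, solving at each step a twisted cohomological equation over the base system $(\sA,\sigma)$; this is the skew-product version of the classical Poincar\'e--Sternberg computation of a formal linearizing jet.

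Write the $r$-jets at the origin as $f_\sa(x)=\Lambda_\sa x+\sum_{2\le|\mk|\le r}a_{\sa,\mk}\,x^{\mk}+o(x^r)$ with $\Lambda_\sa=\diag\lambda_{\sa,i}$, and look for $h_\sa(x)=x+\sum_{2\le|\mk|\le r}c_{\sa,\mk}\,x^{\mk}$, which is automatically a polynomial of degree $\le r$ fixing $0$ with $D_0h_\sa=\id$. For $m$ running from $2$ to $r$, matching the degree-$m$ homogeneous parts at $0$ of $h_{\sigma(\sa)}\circ f_\sa$ and of $D_0f_\sa\circ h_\sa$ gives, for each coordinate $1\le i\le n$ and each multiindex $\mk$ with $|\mk|=m$, a scalar equation
\begin{equation*}
  \lambda_{\sa,i}\,(c_{\sa,\mk})_i-\lambda_\sa^{\mk}\,(c_{\sigma(\sa),\mk})_i=b_{\sa,i,\mk},
\end{equation*}
whose right-hand side $b_{\sa,i,\mk}$ is a universal polynomial in the Taylor coefficients $a_{\bullet,\ml}$ of $f$ of degree $\le m$ and in the previously determined coefficients $c_{\bullet,\ml}$ with $|\ml|<m$, evaluated at $\sa$ and $\sigma(\sa)$. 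By the induction hypothesis and the continuity of the jets of $(f_{\sa})_{\sa\in\sA}$, the map $\sa\mapsto b_{\sa,i,\mk}$ is continuous.

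So everything reduces to the following linear problem on the compact base: given $b\in C^0(\sA)$, find a unique $u\in C^0(\sA)$ with $\alpha(\sa)\,u(\sa)-\beta(\sa)\,u(\sigma(\sa))=b(\sa)$, where $\alpha(\sa)=\lambda_{\sa,i}$ and $\beta(\sa)=\lambda_\sa^{\mk}$ are continuous and non-vanishing. Dividing by $\alpha$ and setting $\gamma=\beta/\alpha$ and $Tu(\sa):=\gamma(\sa)\,u(\sigma(\sa))$, this becomes $(\Id-T)u=b/\alpha$. Hypothesis $(H_3)$ says precisely that $|\gamma(\sa)|-1$ has a sign independent of $\sa$, so by compactness of $\sA$ either $\sup_{\sa}|\gamma(\sa)|<1$ or $\inf_{\sa}|\gamma(\sa)|>1$. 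In the first case $T$ is a contraction of the Banach space $C^0(\sA)$, hence $\Id-T=\sum_{j\ge0}T^j$ is invertible; in the second, $T$ is invertible with inverse $T^{-1}v(\sa)=\gamma(\sigma^{-1}(\sa))^{-1}\,v(\sigma^{-1}(\sa))$, which is a contraction, so $\Id-T=-T\,(\Id-T^{-1})$ is again invertible. Either way $u=(\Id-T)^{-1}(b/\alpha)$ is the unique solution and depends continuously on $\sa$. Running this through $m=2,\dots,r$ produces the family $(h_\sa)_{\sa\in\sA}$; the normalizations $h_\sa(0)=0$, $D_0h_\sa=\id$ hold by construction, and since all $r$-jets at $0$ of $h_{\sigma(\sa)}\circ f_\sa$ and $D_0f_\sa\circ h_\sa$ then agree, their difference is $o(x^r)$. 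Uniqueness of the whole family follows from uniqueness at each degree: for two solutions, the lowest-degree discrepancy in the coefficients solves the homogeneous equation $(\Id-T)u=0$ and therefore vanishes.

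I expect the only genuine obstacle to be the invertibility of $\Id-T$, and it is exactly here that the ``constant sign'' clause of $(H_3)$ --- rather than the mere pointwise non-resonance $|\lambda_{\sa,i}|\ne|\lambda_\sa^{\mk}|$ --- is indispensable, together with the compactness of $\sA$: this is what turns the pointwise inequalities into a uniform spectral gap for the weighted composition operator $T$ and keeps $|\gamma(\sa)|$ bounded away from $1$. The rest --- writing $b_{\sa,i,\mk}$ explicitly and checking that it involves only lower-order data --- is the routine bookkeeping of the formal-linearization algorithm.
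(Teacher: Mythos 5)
Your proof is correct and follows essentially the same route as the paper: build $h$ homogeneous degree by degree, reduce at each degree to a twisted cohomological equation $\alpha u-\beta\,(u\circ\sigma)=b$ over the base, and solve it by a contraction/inverse-contraction dichotomy driven by $(H_3)$ and compactness of $\sA$. The only (cosmetic) difference is in bookkeeping: the paper conjugates $f$ by the already-found jet at each stage so that the source term is simply the degree-$j$ Taylor coefficient of the adjusted $f$, whereas you carry the source $b_{\sa,i,\mk}$ explicitly as a polynomial in lower-order Taylor data of $f$ and previously found $c$'s; both work equally well.
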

\begin{lemma}
  For $r \geq 2$, if $f$ is a continuous family of $C^\infty$-maps satisfies $(H_1-H_2-H_3-H^r_4)$, and furthermore
  $f$ satisfies \[f_{\sa}(x)=  D_0f_\sa (x)+o(x^{r})\; .\]
Then there exists  $\delta > 0$, such that there is a unique
$C^0$-family $(h_\sa)_{\sa\in \sA}$  of $C^\infty$-diffeomorphisms $h_\sa$ from the $\delta$-ball around 0  into $\R^n$ satisfying for following conditions for every $\sa\in \sA$:
\begin{equation}
  h_\sa(0) = 0\;, h(x) = x + o(x^r) \quad \text{and} \quad h_{\sigma(\sa)} \circ f_\sa(x)= D_0f_\sa \circ  h_\sa(x)\; .
  \label{eqn:lin_flat_smooth}
\end{equation}
  \label{lem:lin_flat_smooth}
\end{lemma}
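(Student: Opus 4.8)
The plan is to realise the linearization by the classical limit formula and to control its convergence in every $C^{m}$-norm. Write $L_\sa := D_0 f_\sa = \diag\lambda_{\sa,i}$ and $R_\sa := f_\sa - L_\sa$; thus each $R_\sa$ is $C^\infty$ with vanishing $r$-jet at $0$, and since $\sA$ is compact and $(f_\sa)$ is a $C^0$-family, all these data, together with the Taylor data of the $R_\sa$, are uniformly controlled on $\boule$ once $\delta$ is small; in particular $\|f_\sa(x)\|\le\nu\|x\|$ on $\boule$ with $\nu<1$ uniform, so all compositions below stay in $\boule$. Put $f^k_\sa := f_{\sigma^{k-1}\sa}\circ\dots\circ f_\sa$, $L^k_\sa := L_{\sigma^{k-1}\sa}\circ\dots\circ L_\sa$ (diagonal, entries $\lambda^{(k)}_{\sa,i}=\prod_{j=0}^{k-1}\lambda_{\sigma^j\sa,i}$) and $h^{(k)}_\sa := (L^k_\sa)^{-1}\circ f^k_\sa$, so $h^{(0)}_\sa=\id$ and $D_0 h^{(k)}_\sa=\id$. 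Using $L^{k+1}_\sa = L_{\sigma^k\sa}\circ L^k_\sa$ and $f^{k+1}_\sa = (L_{\sigma^k\sa}+R_{\sigma^k\sa})\circ f^k_\sa$ one obtains the telescoping identity
\[
  h^{(k+1)}_\sa - h^{(k)}_\sa \;=\; (L^{k+1}_\sa)^{-1}\circ R_{\sigma^k\sa}\circ f^k_\sa \;=\; g^{(k)}_\sa\circ h^{(k)}_\sa,\qquad g^{(k)}_\sa := (L^{k+1}_\sa)^{-1}\circ R_{\sigma^k\sa}\circ L^k_\sa .
\]
From $(H^r_4)$, $0<|\lambda_{\sa,i}|<1$ and compactness I extract the numerical inputs I will use: writing $\mu_{\min,\sa}=\min_i|\lambda_{\sa,i}|$, $\mu_{\max,\sa}=\max_i|\lambda_{\sa,i}|$, $\mu_{\min}=\inf_\sa\mu_{\min,\sa}>0$, $\mu_{\max}=\sup_\sa\mu_{\max,\sa}<1$, one has $\tau := \sup\{\,|\lambda_\sa^{\mk}|/|\lambda_{\sa,i}| : \sa\in\sA,\ 1\le i\le n,\ |\mk|\ge r\,\}<1$ (only finitely many $\mk$ are relevant, the rest being uniformly tiny), and $\sup_\sa \mu_{\max,\sa}^{r}/\mu_{\min,\sa}<1$.

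\textbf{The main estimate.} The heart of the matter is the bound, for every $m\ge 0$ and $\delta$ small,
\[
  \|g^{(k)}_\sa\|_{C^m(\boule)}\le C_m\,\beta^k,\qquad \|g^{(k)}_\sa(y)\|\le C\|y\|^{r+1}\beta^k,\qquad \|Dg^{(k)}_\sa(y)\|\le C\|y\|^{r}\beta^k\quad(y\in\boule),
\]
where $\beta := \max\big(\tau,\ \mu_{\min}^{-1}\mu_{\max}^{N+1}\big)<1$ and $N$ is chosen with $\mu_{\max}^{N+1}<\mu_{\min}$. Although $\|(L^{k+1}_\sa)^{-1}\|$ grows, in $g^{(k)}_\sa$ it is matched against the contraction $R_{\sigma^k\sa}\circ L^k_\sa$: expanding $R_{\sigma^k\sa}$ as a Taylor polynomial of degree $N$ plus an $O(\|z\|^{N+1})$ remainder, the crucial algebraic fact is that on the $i$-th coordinate
\[
  (L^{k+1}_\sa)^{-1}\circ\big(y\mapsto y^{\mk}\big)\circ L^k_\sa \;=\; \frac{1}{\lambda_{\sigma^k\sa,i}}\Big(\textstyle\prod_{j=0}^{k-1}\frac{\lambda_{\sigma^j\sa}^{\mk}}{\lambda_{\sigma^j\sa,i}}\Big)\big(y\mapsto y^{\mk}\big),
\]
whose coefficient has modulus $\le\mu_{\min}^{-1}\tau^k$ for $|\mk|\ge r$, while the degree-$N$ remainder contributes at most $\mu_{\min}^{-(k+1)}\mu_{\max}^{k(N+1)}\|y\|^{N+1}$ since $\|L^k_\sa\|\le\mu_{\max}^k$. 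The same identity survives differentiation, because $D^m$ of $y\mapsto(L^k_\sa y)^{\mk}=(\lambda^{(k)}_\sa)^{\mk}y^{\mk}$ is $(\lambda^{(k)}_\sa)^{\mk}$ times $D^m(y\mapsto y^{\mk})$; summing the finitely many monomial contributions and the remainder gives the three displayed bounds.

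\textbf{Convergence and conclusion.} With these bounds I run a simultaneous induction, for $\delta$ small, showing $(h^{(k)}_\sa)_k$ converges in $C^\infty(\boule)$ uniformly in $\sa$. From $h^{(k+1)}_\sa-h^{(k)}_\sa=g^{(k)}_\sa\circ h^{(k)}_\sa$ and the pointwise bounds, induction on $k$ gives $\|h^{(k)}_\sa(x)\|\le 2\|x\|$ and $\|D(h^{(k)}_\sa-\id)\|_{C^0(\boule)}\le\tfrac12$ for all $k$; then the composition inequality $\|g\circ h\|_{C^m}\le C_m\|g\|_{C^m}(1+\|h\|_{C^m})^m$, the Fa\`a di Bruno formula, and $\sum_k\beta^k<\infty$ propagate — by induction on $m$, through a convergent infinite product — a uniform bound $\|h^{(k)}_\sa\|_{C^m(\boule)}\le A_m$, whence $\sum_k\|h^{(k+1)}_\sa-h^{(k)}_\sa\|_{C^m(\boule)}<\infty$ for every $m$. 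Thus $h_\sa := \lim_k h^{(k)}_\sa$ exists in $C^\infty(\boule)$; each $h^{(k)}_\sa$ depends continuously on $\sa$ in the $C^\infty$-topology (a finite composition of such data) and the convergence is uniform, so $(h_\sa)_\sa$ is a $C^0$-family of $C^\infty$-maps. Moreover $h_\sa(0)=0$ and $\|h_\sa(x)-x\| = \big\|\sum_k g^{(k)}_\sa(h^{(k)}_\sa(x))\big\|\le C\|x\|^{r+1}\sum_k\beta^k$, so $h_\sa(x)=x+o(x^r)$; and $\|Dh_\sa-\id\|_{C^0(\boule)}\le\tfrac12$, so each $h_\sa$ is a $C^\infty$-diffeomorphism onto its image. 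Passing to the limit in $h^{(k)}_{\sigma\sa}\circ f_\sa = L_\sa\circ h^{(k+1)}_\sa$ (immediate from $(L^k_{\sigma\sa})^{-1}=L_\sa\circ(L^{k+1}_\sa)^{-1}$ and $f^k_{\sigma\sa}\circ f_\sa=f^{k+1}_\sa$) yields $h_{\sigma\sa}\circ f_\sa = D_0 f_\sa\circ h_\sa$ on $\boule$. For uniqueness, any family $(h'_\sa)$ satisfying the conclusion obeys $h'_\sa = (L^k_\sa)^{-1}\circ h'_{\sigma^k\sa}\circ f^k_\sa = h^{(k)}_\sa + (L^k_\sa)^{-1}(\eta_{\sigma^k\sa}\circ f^k_\sa)$ with $\eta_{\sa'} := h'_{\sa'}-\id=o(x^r)$ uniformly; bounding the last term componentwise by $\prod_j\frac{(\mu_{\max,\sigma^j\sa}+o(1))^{r}}{\mu_{\min,\sigma^j\sa}}\cdot o(\|x\|^r)$ and using $\sup_\sa\mu_{\max,\sa}^{r}/\mu_{\min,\sa}<1$ shows it tends to $0$, so $h'_\sa=\lim_k h^{(k)}_\sa=h_\sa$.

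\textbf{Main obstacle.} The delicate point is the estimate of the second paragraph: reconciling, uniformly over $\sa$ and in every $C^m$-norm, the exponential growth of $(L^{k+1}_\sa)^{-1}$ with the decay coming from the $r$-flatness of $R_{\sigma^k\sa}$ composed with the contraction $L^k_\sa$. The key is never to bound $\|(L^{k+1}_\sa)^{-1}\|$ and $\|L^k_\sa\|$ (or $\|f^k_\sa\|$) separately — which would force the false comparison $\sup_\sa\mu_{\max,\sa}^{r+1}<\inf_\sa\mu_{\min,\sa}$ — but to keep the cocycle product $\prod_j(\lambda_{\sigma^j\sa}^{\mk}/\lambda_{\sigma^j\sa,i})$ intact and invoke $(H^r_4)$ factor by factor; the identity for $D^m$ of a monomial is what makes this uniform over all derivatives at once. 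A secondary point is the circularity-free bookkeeping in the simultaneous induction producing the uniform $C^m$-bounds on the iterates $h^{(k)}_\sa$.
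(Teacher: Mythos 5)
Your proof is correct, but it runs along a genuinely different track from the paper's. The paper proves a finite-regularity version first: it introduces the operator $T_f(h)_\sa=(D_0f_\sa)^{-1}\circ h_{\sigma(\sa)}\circ f_\sa$ on the affine space $\iota+V_\delta$ of $r$-flat perturbations of the identity, shows via Fa\`a di Bruno and $(H^r_4)$ that $T_f$ is a contraction for the single norm $\|D^r\cdot\|_0$, takes the Banach fixed point to get a $C^r$ solution, and only afterwards upgrades to $C^\infty$ by applying the $C^q$ statement for every $q\ge r$ and invoking uniqueness. Your iterates $h^{(k)}_\sa=(L^k_\sa)^{-1}\circ f^k_\sa$ are exactly the orbit $T_f^k(\iota)$ of that operator, but your convergence mechanism is different: instead of contraction in one norm plus bootstrap, you prove geometric decay of the telescoping differences $g^{(k)}_\sa\circ h^{(k)}_\sa$ in \emph{every} $C^m$-norm at once, by keeping the cocycle products $\prod_j\lambda_{\sigma^j\sa}^{\mk}/\lambda_{\sigma^j\sa,i}$ intact and invoking $(H^r_4)$ factor by factor, with a degree-$N$ Taylor cutoff to absorb the remainder. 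What your route buys is a one-pass $C^\infty$ proof with explicit rates; what the paper's route buys is the finite-regularity Corollary~2.3 as a byproduct and, more importantly, the operator framework ($T_f$ and the Neumann series $R_f=\sum_m T_f^m$) that is reused verbatim in Section~5 to establish the tame-smooth parameter dependence -- your limit formula would have to be reorganized into that form anyway for the parametric statement.

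One point you should make fully explicit: the composition inequality $\|g\circ h\|_{C^m}\le C_m\|g\|_{C^m}(1+\|h\|_{C^m})^m$, fed naively into $h^{(k+1)}=h^{(k)}+g^{(k)}\circ h^{(k)}$, yields a recursion of the form $a_{k+1}\le a_k+C\beta^k a_k^m$ whose boundedness is not immediate. The correct bookkeeping is the layered induction on $m$ you allude to: in Fa\`a di Bruno the only term containing $D^m h^{(k)}$ is $Dg^{(k)}(h^{(k)})\cdot D^mh^{(k)}$, whose coefficient is $O(\delta^r\beta^k)$ and hence summable, while all other terms involve only derivatives of $h^{(k)}$ of order $\le m-1$, already bounded by the previous induction step; this gives $b_{k+1}\le b_k(1+C\delta^r\beta^k)+C'\beta^k$ and the convergent infinite product. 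With that spelled out, the argument is complete.
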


For both of these lemmas, we will first prove a version for finite regularity, and then we explain how to promote it to the $C^{\infty}$ setting.
\begin{proof}[Proof of \ref{thm:lin_no_param_smooth} without parameter dependence]
  By \cref{rmk:h_r_4} there is an $r$ such that $f$ satisfies $(H^r_4)$. 
  For the existence of $h$ on a small ball around $0$, we can do a direct composition of  Lemma~\ref{lem:formal_no_param_smooth} and Lemma~\ref{lem:lin_flat_smooth}.
  As we assume that $f$ is a family of contracting diffeomorphisms on $\B$, we can use the relation
\begin{equation*}
  (D_{0}f_\sa)^{-1} \circ h_{\sigma(\sa)} \circ f_\sa(x) = h_{\sa}(x)
\end{equation*}
to iteratively extend $h$ to $\B$. %

The uniqueness of $h$ can be proved similarly: suppose $\tilde h$ is another family satisfying \eqref{eqn:thm}.
By the uniqueness part of Lemma~\ref{lem:formal_no_param_smooth} we get that $h_{\sa}(x) = \tilde h_{\sa}(x) + o(x^r)$, or
$\tilde h_{\sa}(x) \circ h_{\sa}^{-1}(x) = x + o(x^r)$. By the uniqueness part of Lemma~\ref{lem:lin_flat_smooth} applied to the linear map family,
we see that $\tilde h_{\sa} \circ h_{\sa}^{-1}(x) = x$ in a neighbourhood of $0$, or $\tilde h_{\sa}(x) = h_{\sa}(x)$. By extension, we
also get $\tilde h_{\sa}(x) = h_{\sa}(x)$ on $\B$.
\end{proof}
\subsection{Formal linearization}
Actually, $C^\infty$-smoothness is not required in \cref{lem:formal_no_param_smooth}. So we will show the following version:
\begin{lemma}
   For $r\geq 2$ and every continuous family of $C^r$-maps $(f_{\sa})_{\sa \in \sA}$ satisfying $(H_1-H_2-H_3)$, there exists a unique $C^0$-family of polynomial maps $(h_\sa)_{\sa\in \sA}$ of $\R^n$ with degree $\leq r$ such that for every $\sa\in \sA$:
   \begin{equation}
     h_\sa(0)=0\;, \quad D_0 h_\sa=\id \quad \text{and} \quad h_{\sigma(\sa)} \circ f_\sa(x)= D_0f_\sa \circ  h_\sa(x)+o(x^r)\; . 
     \label{eqn:formal_no_param}
   \end{equation}
\label{lem:formal_no_param}
\end{lemma}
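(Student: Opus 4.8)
The plan is to construct the polynomial family $(h_\sa)$ degree by degree, solving at each homogeneous order a linear \emph{cohomological equation} over the base $\sA$. Write $h_\sa = \id + \sum_{j=2}^r H_\sa^{(j)}$ where $H_\sa^{(j)}$ is a homogeneous polynomial map of degree $j$, and similarly expand $f_\sa = \Lambda_\sa + \sum_{j=2}^r F_\sa^{(j)} + o(x^r)$ with $\Lambda_\sa = D_0 f_\sa = \diag \lambda_{\sa,i}$. Plugging into $h_{\sigma(\sa)}\circ f_\sa = \Lambda_\sa \circ h_\sa + o(x^r)$ and collecting the homogeneous part of degree $j$, one gets an equation of the form
\begin{equation*}
  H_{\sigma(\sa)}^{(j)}\circ \Lambda_\sa - \Lambda_\sa\circ H_\sa^{(j)} = R_\sa^{(j)}\;,
\end{equation*}
where $R_\sa^{(j)}$ is an explicit polynomial expression in $f_\sa$ and the previously determined $H_\sa^{(2)},\dots,H_\sa^{(j-1)}$, hence a continuous family in $\sa$. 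This is a twisted coboundary equation for the operator $\mathcal L^{(j)}$ acting on continuous families of degree-$j$ homogeneous maps, $(\mathcal L^{(j)} H)_\sa = H_{\sigma(\sa)}\circ\Lambda_\sa - \Lambda_\sa\circ H_\sa$.

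The key point is that $\mathcal L^{(j)}$ is invertible on the relevant Banach space of continuous families, and this is exactly where the non-resonance hypothesis $(H_3)$ enters. Decompose into scalar components and monomials: the $i$-th component of a degree-$j$ homogeneous map is a combination of monomials $x^\mk$ with $|\mk| = j$, and on the coefficient of $x^\mk$ in the $i$-th slot the operator acts as multiplication by the cocycle $\sa\mapsto \lambda_\sa^{\mk} $ followed by a shift by $\sigma$, combined against division by $\lambda_{\sigma^{-1}(\sa),i}$ — more precisely, $\mathcal L^{(j)}$ restricted to that line bundle over $\sA$ is $c_\sa \mapsto \lambda_\sa^\mk\, c_{\sigma(\sa)} - \lambda_{\sa,i}\, c_\sa$. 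To invert such an operator $c\mapsto \lambda_\sa^\mk c_{\sigma(\sa)} - \lambda_{\sa,i} c_\sa$ on $C^0(\sA,\C)$, we use that $(H_3)$ guarantees $|\lambda_\sa^\mk|\neq |\lambda_{\sa,i}|$ with a sign independent of $\sa$: if $|\lambda_\sa^\mk| < |\lambda_{\sa,i}|$ uniformly, write the inverse as the convergent Neumann-type series $c_\sa = -\sum_{m\ge 0} \lambda_{\sa}^{\mk}\lambda_{\sigma(\sa)}^\mk\cdots\lambda_{\sigma^{m-1}(\sa)}^\mk / (\lambda_{\sigma^m(\sa),i}\cdots\lambda_{\sa,i})\cdot (\text{forward terms})$; if instead $|\lambda_\sa^\mk| > |\lambda_{\sa,i}|$ uniformly, solve for $c_{\sigma(\sa)}$ and iterate in the backward direction. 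Compactness of $\sA$ turns the pointwise inequalities into uniform ones, so the geometric series converges in $C^0$-norm and yields a bounded inverse depending continuously on the data. This gives existence and uniqueness of $H_\sa^{(j)}$ at each step, and inducting on $j$ from $2$ to $r$ produces the desired polynomial family; uniqueness of $(h_\sa)$ follows since at every order the equation has a unique solution.

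The main obstacle is setting up the invertibility of $\mathcal L^{(j)}$ cleanly — that is, correctly extracting from $(H_3)$ the dichotomy "expanding vs.\ contracting" for each monomial cocycle, proving the uniform spectral gap via compactness of $\sA$, and checking that the resulting inverse is bounded with the right continuity in $f$ (the latter matters for the later parameter-dependence statements, though for \cref{lem:formal_no_param} one only needs continuity in $\sa$). The bookkeeping of the remainder terms $R_\sa^{(j)}$ — verifying they are genuinely $C^0$ in $\sa$ and depend polynomially on the lower-order data — is routine but must be done with care so that the induction closes. Note $C^\infty$ regularity of $f_\sa$ plays no role: only the $r$-jet of $f_\sa$ at $0$ enters, which is why the statement is naturally for $C^r$ families, matching \cref{lem:formal_no_param}.
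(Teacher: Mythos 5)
Your proposal is correct and follows essentially the same route as the paper: induct over homogeneous degree, reduce to a twisted coboundary equation $c_\sa\mapsto\lambda_\sa^\mk c_{\sigma(\sa)}-\lambda_{\sa,i}c_\sa$ on $C^0(\sA,\R)$ for each component and monomial, and invert it by iterating forward or backward according to which inequality in $(H_3)$ holds, with compactness of $\sA$ giving the uniform contraction constant. The only organizational difference is that the paper, at each step, conjugates $f$ by the partial linearization so that it becomes $(j-1)$-flat, which makes the source term at degree $j$ literally $\partial^\mk f_\sa(0)$ and thereby sidesteps the bookkeeping of $R_\sa^{(j)}$ in terms of lower-order data that you flag as needing care; this is a cosmetic simplification, not a different argument.
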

\begin{proof}[Proof of Lemma~\ref{lem:formal_no_param}]
We use the language of $r$-jets in the proof.
  Recall that two $C^r$-maps $g, \tilde g \colon \B \rightarrow \R^n$ have the same $r$-jet if $g(x) = \tilde g(x) + o(x^r)$.
  This is an equivalence relation, the resulting quotient space is the space of $r$-jets $J_r$. Every $r$-jet can
  be represented by a unique polynomial of degree $\leq r$. So we could have stated the lemma also in the language of $r$-jets: there
  is a unique $C^0$-family of $r$-jets satisfying \eqref{eqn:formal_no_param}.
  An advantage of jets is that they form a semigroup under composition, and
  that jets with invertible differential are also invertible in the semigroup. 

  We show by induction on $j$ from $1$ to $r$ that we can find a $C^0$-family $(h_\sa)_{\sa\in \sA}$  of $r$-jets $h_\sa$
  with 
\begin{equation}
  h_\sa(0)=0\;, \quad D_0 h_\sa=\id  \quad \text{and} \quad h_{\sigma(\sa)} \circ f_\sa(x)= D_0f_\sa \circ  h_\sa(x)+o(x^{j}) \; . 
  \label{eqn:rec1}
\end{equation}
For $j=1$, there is nothing to show. Assume that $j\ge 2$.  By induction, if
$(h_{\sa})_{\sa \in \sA}$ is a family of $r$-jets satisfying \eqref{eqn:rec1} 
 for $j-1$, then the composition $(h_{\sigma(\sa)} \circ
f_{\sa} \circ h_{\sa}^{-1})_{\sa \in \sA}$ is well-defined in the semigroup of $r$-jets. 
 As we only want to prove the statement on the level of jets, we can
replace $(f_{\sa})_{\sa \in \sA}$ by $(h_{\sigma(\sa)} \circ f_{\sa} \circ
h_{\sa}^{-1})_{\sa \in \sA}$ with $(h_{\sa})_{\sa \in \sA}$ satisfying
\eqref{eqn:rec1} for $j-1$. So
we can assume that $(f_{\sa})$ satisfies
\begin{equation}
\label{for f}f_{\sa}(x)=  D_0f_\sa (x)+o(x^{j-1})\; .
\end{equation} 
Let us recall for every multiindex $\mk = (k_1, \dots, k_n)$ and $x \in \R^n$, $x^\mk = \prod^n_{i=1} x^{k_i}_{i}$ is a real number. Moreover we denote
$|\mk| = k_1 + \dots + k_n$, $\mk! = \prod^n_{i=1} k_i!$ and $\partial^\mk = \partial^{k_1}_{x_1} \dots \partial^{k_n}_{x_n}$.
So we have that $\partial^\mk f_{\sa}(0)$ is in $\R^n $.   Thus, by \cref{for f}, it holds:
\[f_{\sa}(x)=  D_0f_\sa (x)+\sum_{|\mk|=j} \partial^\mk f_\sa(0)\cdot \frac{x^\mk}{\mk!}+ o(x^{j})\; .\]
Note that $ \partial^\mk f_\sa(0)\ \cdot \frac{x^\mk}{\mk!}$ denotes the scalar multiplication of the scalar $\frac{x^\mk}{\mk!}\in \R$ and the vector $ \partial^\mk f_\sa(0)\in \R^n$. 
Note that $\R^n$ is a commutative ring endowed with the canonical product structure \[x\bullet y= (x_i\cdot y_i)_{1\le i\le n}.\]  The ring $\R^n$ endowed with the scalar multiplication, denoted by $\cdot$, it is a $\R$-algebra. In order to stay with `Taylor-like' development, we will proceed with scalar multiplication on the right, as above.  In these notations we have $Df_\sa(x)= \lambda_\sa\bullet x$, where $\lambda_{\sa}=(\lambda_{\sa,i})_{1\leq i \leq n}$ and $Df_{\sa}(x) = \diag \lambda_{\sa,i}$. Thus:
\[f_\sa(x)=  \lambda_a\bullet  x +\sum_{|\mk|=j} \partial^\mk f_\sa(0)\cdot \frac{x^\mk}{\mk!}+ o(x^{j})\; .\]

We look for a continuous family $(h_\sa)_{\sa\in \sA}$ of polynomials in $\R^n[X_1,\dots,X_n]$ of the form:
\[h_\sa (x) = x+ \sum_{|\mk|=j} q_{\sa, \mk} \cdot \frac{x^\mk}{\mk!},\]
where $q_{\sa, \mk}\in \R^n$. Here likewise  $q_{\sa, \mk} \cdot \frac{x^\mk}{\mk!}$ denote the scalar multiplication of the scalar $\frac{x^\mk}{\mk!}\in \R$ and the vector $q_{\sa, \mk}\in \R^n$. %
Thus  we have:
\begin{equation} \label{eq1 linea}
 D_0f_\sa \circ  h_\sa(x)= 
 \lambda_{\sa}\bullet x+\sum_{|\mk|=j} \lambda_{\sa} \bullet q_{\sa, \mk} \cdot \frac{x^\mk}{\mk!}  +o(x^{j}) 
\end{equation} 
\begin{multline} \label{eq2 linea}
h_{\sigma(\sa)} \circ f_\sa(x) = 
f_\sa(x) + \sum_{|\mk|=j} q_{\sigma(\sa), \mk}\cdot \frac{(\lambda_\sa\bullet x)^\mk}{\mk!}  +o(x^{j}) \\
= f_\sa(x) + \sum_{|\mk|=j} q_{\sigma(\sa), \mk}\cdot  \lambda_\sa^\mk \cdot \frac{x^\mk}{\mk!} +o(x^{j}) \\
= 
\lambda_{\sa}\bullet x+
\sum_{|\mk|=j} \partial^\mk f_\sa(0)\cdot \frac{x^\mk}{\mk!}+ 
\sum_{|\mk|=j} q_{\sigma(\sa), \mk}\cdot \lambda_\sa^\mk \cdot \frac{x^\mk}{\mk!} +o(x^{j}) \; .
\end{multline} 
Then subtracting \cref{eq1 linea} from \cref{eq2 linea} we obtain:
\[
h_{\sigma(\sa)} \circ f_\sa(x)- D_0f_\sa \circ  h_\sa(x)= 
\sum_{|\mk|=j} \partial^\mk f_\sa(0)\cdot \frac{x^\mk}{\mk!}+ 
 \sum_{|\mk|=j} q_{\sigma(\sa), \mk}\cdot \lambda_\sa^\mk \cdot \frac{x^\mk}{\mk!} -  \sum_{|\mk|=j} \lambda_{\sa} \bullet q_{\sa, \mk} \cdot \frac{x^\mk}{\mk!} +o(x^{j}) \; .\]
Hence to prove the lemma, it suffices to show the existence of an $C^0$-family  $(h_\sa)_{\sa\in \sA}$ such that:
\begin{equation} \label{to be proved} 
  \sum_{|\mk|=j} \partial^\mk f_\sa(0)\cdot \frac{x^\mk}{\mk!}+ 
 \sum_{|\mk|=j} q_{\sigma(\sa), \mk}\cdot\lambda_\sa^\mk \cdot \frac{x^\mk}{\mk!} =  \sum_{|\mk|=j} \lambda_{\sa} \bullet q_{\sa, \mk} \cdot \frac{x^\mk}{\mk!}\; .
\end{equation} 
Then with $\lambda_{\sa}^{-1}=  (\lambda_{\sa,i}^{-1})_{1\le i\le n}$ it holds:
\[
 \sum_{|\mk|=j} \lambda_{\sa}^{-1} \bullet  \partial^\mk f_\sa(0)\cdot \frac{x^\mk}{\mk!}+ 
 \sum_{|\mk|=j}\lambda_{\sa}^{-1} \bullet   q_{\sigma(\sa), \mk}\cdot\lambda_\sa^\mk \cdot \frac{x^\mk}{\mk!} =  \sum_{|\mk|=j} q_{\sa, \mk} \cdot \frac{x^\mk}{\mk!}\; .
\]
Comparing coefficients in $x^{\mk}$, it is enough to find functions $q_{\sa,\mk} \in C^{0}(\sA, \R^n)$ for every $|\mk| = j$ with
  \begin{equation}
    \lambda_{\sa}^{-1} \bullet  \partial^\mk f_\sa(0) + 
    \lambda_{\sa}^{-1} \bullet  q_{\sigma(\sa), \mk}\cdot\lambda_\sa^\mk = q_{\sa, \mk}\; . \label{eqn:fix_op_formal}
  \end{equation}
Writing $q_{\sa,\mk}$ in components by $q_{\sa,\mk} = (q_{\sa,\mk,1}, \dots, q_{\sa,\mk,n})$, and similarly for $f$, this means we would like to find functions 
$q_{\sa,\mk,i} \in C^{0}(\sA, \R)$ for every $|\mk| = j, 1 \leq i \leq n$ with 
\[
  \lambda_{\sa,i}^{-1} \cdot \partial^\mk f_{\sa,i}(0) + 
  \lambda_{\sa,i}^{-1} \cdot q_{\sigma(\sa),\mk,i}\cdot\lambda_\sa^\mk = q_{\sa,\mk,i}\; .
\]
By $(H_3)$ we know that we have $|\lambda^{-1}_{\sa,i}| \cdot |\lambda_{\sa}^{\mk}|$ is either smaller than 1 for all $\sa \in A$ or larger than 1 for all $\sa \in A$.
So we can consider the operator
\begin{equation*}
  O_{\mk,i}:(q_{\sa, \mk,i} )_{\sa\in \sA} \mapsto
  \begin{cases}
    \lambda_{\sa,i}^{-1} \cdot (q_{\sigma(\sa), \mk,i}\cdot\lambda_\sa^\mk + \partial^\mk f_{\sa,i}(0))
    &\text{ if }  |\lambda^{-1}_{\sa,i}| \cdot |\lambda_{\sa}^{\mk}| < 1 \\
    (\lambda_{\sigma^{-1}(\sa)}^\mk)^{-1} \cdot (\lambda_{\sigma^{-1}(\sa),i} \cdot q_{\sigma^{-1}(\sa), \mk,i}-  \partial^\mk f_{\sigma^{-1}(\sa),i}(0))& \text { otherwise.} 
  \end{cases}
\end{equation*}

And we let $O_{\mk}$ the operator that is componentwise given by the operators $O_{i,\mk}$.
Now $O_\mk$ is an affine operator with a contracting linear part by $(H_3)$ and compactness of $\sA$. So there is a unique fixed point for $O_\mk$, and the fixed point is a solution for \eqref{eqn:fix_op_formal}. 
 Doing this for every  $|\mk|=r$,  we define  the (unique)  family $(h_{\sa} )_{\sa\in \sA}$  satisfying   
\cref{to be proved}.
The uniqueness of $(h_{\sa})_{\sa \in \sA} \mod o(x^r)$ follows from the uniqueness of the fixed point of a contracting map.
\end{proof}
\begin{remark}
  In fact, $(h_{\sa})_{\sa \in \sA}$ depends continuously on $f$:
  since composition of $r$-jets is continuous, it only remains to show that the $q_{\sa,\mk}$ depend continuously on $f$. For this we note that
  the operators $O_{\mk,i}$ depend continuously on $f$, by \cref{lem:smooth_contraction}, their fixed points depend continuously on $f$ and so does $(h_{\sa})_{\sa \in \sA}$.
  \label{rem:lin_formal_continuous}
\end{remark}
\subsection{Linearization of flat contractions}
We prove below the following finite regularity version of Lemma~\ref{lem:lin_flat_smooth}:
\begin{lemma}\label{lemme a rafiner}
  For $r \geq 2$, if $(f_\sa)_{\sa\in \sA}$ is a $C^0$-family of $C^r$-maps satisfying
  $(H_1-H_2-H_3-H^r_4)$ and furthermore
  $f$ satisfies \[f_\sa(x)=  D_0f_\sa (x)+o(x^{r})\; .\]
then there exists  $\delta > 0$, such that there is a unique
$C^0$-family $(h_\sa)_{\sa\in \sA}$  of $C^r$-diffeomorphisms $h_\sa$ from the $\delta$-ball around 0 into $\R^n$ satisfying for following conditions for every $\sa\in \sA$:
\begin{equation}
  h_\sa(0) = 0\;, \quad  h_{\sa}(x) = x + o(x^r), \quad \text{and} \quad h_{\sigma(\sa)} \circ f_\sa(x)= D_0f_\sa \circ  h_\sa(x)\; .
  \label{eqn:lin_flat}
\end{equation}
  \label{lem:lin_flat}
\end{lemma}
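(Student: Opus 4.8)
The plan is to recast the conjugacy equation as a fixed point of an affine operator that becomes a contraction on a space of \emph{flat} $C^r$-families once the radius $\delta$ is taken small. Write $\Lambda_\sa := D_0 f_\sa = \diag(\lambda_{\sa,i})$ and $\rho_\sa := f_\sa - \Lambda_\sa$, so that the flatness hypothesis reads $\partial^\mk \rho_\sa(0) = 0$ for all $|\mk| \le r$. Seeking $h_\sa$ in the form $h_\sa = \id + \phi_\sa$, the equation $h_{\sigma(\sa)} \circ f_\sa = \Lambda_\sa \circ h_\sa$ together with $f_\sa = \Lambda_\sa + \rho_\sa$ is equivalent to
\[
  \phi_\sa \;=\; \Lambda_\sa^{-1}\bigl(\rho_\sa + \phi_{\sigma(\sa)} \circ f_\sa\bigr) \;=:\; (T\phi)_\sa \; .
\]
First I would fix $\delta > 0$ small enough that $f_\sa|\boule$ is a genuine contraction with $f_\sa(\boule) \subset \boule$ for every $\sa$, and introduce the Banach space $\mathcal X_\delta$ of continuous families $(\phi_\sa)_{\sa \in \sA}$ of $C^r$-maps $\boule \to \R^n$ with $\partial^\mk \phi_\sa(0) = 0$ for all $|\mk| \le r$, normed by $\|\phi\| := \sup_\sa \sup_{|\mk| \le r} \sup_{\boule} \|\partial^\mk \phi_\sa\|$. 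Using the chain rule and $\partial^\mk \rho_\sa(0) = 0$ for $|\mk| \le r$, one checks that $T(\mathcal X_\delta) \subset \mathcal X_\delta$; moreover $T$ is affine, $T\phi = T0 + L\phi$, with $L\phi := (\Lambda_\sa^{-1}(\phi_{\sigma(\sa)} \circ f_\sa))_\sa$ linear and $T0 = (\Lambda_\sa^{-1}\rho_\sa)_\sa$.

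The heart of the proof is the estimate $\|L\| < 1$ for $\delta$ small. Note $\|\Lambda_\sa^{-1}\| = 1/\min_i|\lambda_{\sa,i}|$, set $\ell := \inf_\sa \min_i |\lambda_{\sa,i}| > 0$ and $M := \sup_\sa \max_j |\lambda_{\sa,j}| < 1$ (by $(H_1)$, continuity and compactness of $\sA$), and write $\Lambda_\sa^{+} := \max_j|\lambda_{\sa,j}|$. Since $\rho_\sa$ vanishes to order $r \ge 2$ at $0$, one has $D\rho_\sa(0) = 0$, so by equicontinuity $\varepsilon(\delta) := \sup_\sa \sup_{\boule} \|Df_\sa - \Lambda_\sa\| \to 0$ as $\delta \to 0$; hence the Lipschitz constant $\mu_\sa := \Lip(f_\sa|\boule) \le \Lambda_\sa^{+} + \varepsilon(\delta)$, which is $< 1$ for $\delta$ small. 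Fix $\eta \in \mathcal X_\delta$ and a multiindex $\mk$, $|\mk| \le r$. By Fa\`a di Bruno, $\partial^\mk(\eta_{\sigma(\sa)} \circ f_\sa)(x)$ is a finite sum of terms $c\,(\partial^\ml \eta_{\sigma(\sa)})(f_\sa(x)) \cdot \prod_s \partial^{a_s} f_\sa(x)$ with $1 \le |\ml| \le |\mk|$ and $\sum_s a_s = |\mk|$ ($|\ml|$ factors $\partial^{a_s} f_\sa$); the only term in which every $a_s$ equals $1$ is the pure chain-rule term $D^{|\mk|}\eta_{\sigma(\sa)}(f_\sa(x)) \circ (Df_\sa(x))^{\otimes |\mk|}$, which has coefficient exactly $1$ and forces $|\ml| = |\mk|$.

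Then I would split the estimate. The single top-order term ($|\mk| = |\ml| = r$) is bounded directly:
\[
  \bigl\| \Lambda_\sa^{-1}\, D^r\eta_{\sigma(\sa)}(f_\sa(x)) \circ (Df_\sa(x))^{\otimes r} \bigr\|
  \;\le\; \frac{\mu_\sa^{\,r}}{\min_i|\lambda_{\sa,i}|}\,\|\eta\|
  \;\le\; \frac{(\Lambda_\sa^{+} + \varepsilon(\delta))^{r}}{\min_i|\lambda_{\sa,i}|}\,\|\eta\| \; ,
\]
and since $\sa \mapsto (\Lambda_\sa^{+})^{r}/\min_i|\lambda_{\sa,i}|$ is continuous and $< 1$ for every $\sa$ by $(H^r_4)$ (apply it to the size-$r$ multiindex supported on an index where $|\lambda_{\sa,j}|$ is maximal), compactness of $\sA$ gives $\theta_0 := \sup_\sa (\Lambda_\sa^{+})^{r}/\min_i|\lambda_{\sa,i}| < 1$, so this contribution is $(\theta_0 + o(1))\|\eta\|$ as $\delta \to 0$. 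Every other term (all terms when $|\mk| < r$, and the non-pure terms when $|\mk| = r$) has $|\ml| < r$: then $\partial^\ml\eta_{\sigma(\sa)}$ is $C^{r-|\ml|}$ with all derivatives up to order $r - |\ml| \ge 1$ vanishing at $0$, so Taylor's formula, with remainder controlled by $\|D^r\eta_{\sigma(\sa)}\|_\infty \le \|\eta\|$, yields $\|(\partial^\ml\eta_{\sigma(\sa)})(f_\sa(x))\| \le C(n,r)\,\|\eta\|\,(\mu_\sa\delta)^{r-|\ml|} \le C(n,r)\,\|\eta\|\,\delta$, while $\prod_s\partial^{a_s}f_\sa(x)$ is bounded uniformly in $\sa$ for $\delta \le 1$ (first-derivative factors are $\le \mu_\sa < 1$, higher-order ones are $\le \sup_\sa\|f_\sa\|_{C^r(\B)}$). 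Hence these terms contribute $O(\delta)\|\eta\|$ with constant independent of $\sa$, so altogether $\|L\eta\| \le (\theta_0 + C\delta)\|\eta\|$; choosing $\delta$ small makes $\kappa := \theta_0 + C\delta < 1$, and $T$ is a $\kappa$-contraction of $\mathcal X_\delta$ with a unique fixed point $\phi^*$. By flatness $\sup_\sa\|\rho_\sa\|_{C^r(\boule)} \to 0$ as $\delta \to 0$, so $\|T0\| \le \ell^{-1}\sup_\sa\|\rho_\sa\|_{C^r(\boule)} \to 0$ and $\|\phi^*\| \le \|T0\|/(1-\kappa) \to 0$; shrinking $\delta$ once more, $\|D\phi^*_\sa\|_{\infty,\boule} < 1$, so $h_\sa := \id + \phi^*_\sa$ is a $C^r$-diffeomorphism of $\boule$ onto its image, with $h_\sa(0) = 0$, $h_\sa(x) = x + o(x^r)$ (Taylor, the $r$-jet of $\phi^*_\sa$ at $0$ being zero), and satisfying \eqref{eqn:lin_flat} by construction. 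For uniqueness, any family $(\tilde h_\sa)$ satisfying \eqref{eqn:lin_flat} on a ball $\B_{\delta'}$ with $\delta' \le \delta$ (restrict otherwise) yields $\tilde\phi_\sa := \tilde h_\sa - \id \in \mathcal X_{\delta'}$ with $\tilde\phi = T\tilde\phi$, whence $\tilde\phi = \phi^*|_{\B_{\delta'}}$ by uniqueness of the fixed point of $T$ on $\mathcal X_{\delta'}$.

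I expect the main obstacle to be exactly the top-order estimate: the $C^r$-norm on $\mathcal X_\delta$ controls $\|D^r\eta_{\sigma(\sa)}\|_\infty$ but not its modulus of continuity at $0$, so the order-$r$ part of $L$ cannot be made small simply by shrinking $\delta$. The argument closes only because that part reduces to the single pure chain-rule term $D^r\eta \circ (Df)^{\otimes r}$, which carries combinatorial coefficient $1$ (no blow-up) and whose rate $\mu_\sa^{\,r}/\min_i|\lambda_{\sa,i}|$ is forced below $1$ precisely by hypothesis $(H^r_4)$; all the remaining terms involve strictly lower-order derivatives of $\eta$, which vanish to positive order at $0$ and are absorbed into $O(\delta)$.
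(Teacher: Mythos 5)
Your proof is correct and follows essentially the same route as the paper's: the same conjugacy operator (written as an affine map on the linear space of $r$-flat perturbations of the identity rather than on the affine subspace $\iota + V_{\delta}$), the same Faà di Bruno splitting into the pure chain-rule term controlled by $(H^r_4)$ and lower-order terms absorbed into $O(\delta)$ via flatness and Taylor--Lagrange, concluded by the Banach fixed point theorem. The only cosmetic difference is that you contract for the full $C^r$-norm while the paper uses only the top-derivative norm $\|D^r \cdot\|_0$; these are equivalent on the flat subspace by the very Taylor estimate you both use.
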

\begin{proof} %
  Let us denote the $\delta$-ball around 0 by $\B_{\delta}$.
  Consider the vector space $W_\delta$ of $C^0$-families $(h_\sa)_{\sa\in \sA}$ of $C^r$-maps from $\B_{\delta}$ into $\R^n$. By $(H_1)$,
  we have that $f_{\sa}(\B_{\delta}) \subset \B_{\delta}$ for $\delta \in (0,1)$. Hence the operator
  \begin{align*}
    T_f \colon W_{\delta} &\rightarrow W_\delta \\
    (h_{\sa})_{\sa \in \sA} & \mapsto (D_0f_{\sa})^{-1} \circ h_{\sigma(\sa)} \circ f_{\sa}
  \end{align*}
  is well-defined. It is obviously linear. Let $V_{\delta}$ be the linear subspace of $C^0$-families of $C^r$-maps on $\B_\delta$ that satisfy $h_{\sa}(x) = o(x^r)$ for all $\sa \in \sA$. Then $T_f$ leaves $V_{\delta}$ invariant as we  assumed that $f$ is $r$-flat.

  We consider on $V_{\delta}$ the norm given by 
  \begin{equation}
    \|g\|_{V} = \norm{D^r g}_{0} = \sup_{x \in \B_{\delta}} \norm{D_x^r g} \; .
    \label{eq:norm_adelta}
  \end{equation}
  Here and in the following we use $\| \cdot \|_{0}$ for the uniform $C^0$ norm on $\B_{\delta}$.
  By Taylor--Lagrange expansion around 0, we see that for $k < r$:
\begin{equation}\label{eqn:delta_bound}  \norm{ D_x^k  g } \leq  \delta\cdot  \|g\|_{V}\; , \quad\forall x\in \B_\delta\; .\end{equation}
This implies that  $(V_\delta, \|\cdot \|_{V})$ is a Banach space. Let us denote by $\iota \in W_\delta$ the constant family of the canonical inclusion $\B_{\delta} \subset \R^n$, so $\iota_{\sa}(x) = x$ for all $\sa \in \sA, x \in \B_{\delta}$.
Note that $T_f$ leaves the affine subspace $\iota + V_{\delta}$ invariant, as $T_f(\iota) \in \iota + V_{\delta}$ by $r$-flatness of $f$. We endow $\iota + V_{\delta}$  with the distance induced by the norm of $V_{\delta}$.  
    We will show that, if $\delta$ is small enough, then $T_f$ is a contraction on the affine space $\iota + V_{\delta}$. The resulting fixed point is then a solution
  for our conjugacy problem.

For every $\sa$, let $\mu_\sa:= \min_i |\lambda_{\sa, i}|$ and  $\Lambda_\sa:= \max_i |\lambda_{\sa, i}|$. By $(H^r_4)$, it holds:
\[\mu_\sa > \Lambda_\sa^r\; .\]
Note that $\|D_0f_\sa\|\le \Lambda_\sa$. By compactness of $\sA$, there is $C_1\in (0,1)$ such that:
\[C_1\cdot \mu_\sa > \Lambda_\sa^r\; .\]
Thus for every $C\in (C_1,1)$, there exists $\delta'>0$ such that   
\[C\cdot \mu_\sa > \|D_x f _\sa\| ^r\; , \quad \forall  x\in \B_{\delta'}\; .\]
Thus  we have:
\begin{equation}\label{def C finite reg}  C > \mu_\sa^{-1} \cdot  \|D_x f _\sa\| ^r\; , \quad \forall  x\in \B_{\delta'}\; .\end{equation} 
Now we consider the Faà di Bruno's formula for $D^r$:
  \begin{equation*}
  D^r  (h_{\sigma(\sa)}\circ f_{\sa})  =  D^r_{f_{\sa}}  h_{\sigma(\sa)}  ( Df_{\sa})^{\otimes r}+  
\sum_{k<  r} D^k_{f_{\sa}}  h_{\sigma(\sa)} (P _{\sa, k})  \end{equation*}
     where $P _{\sa, k} $ is a $k$-tensor whose coefficients are polynomial functions of the $r-1$ one first derivatives of $f$. Hence we can bound these polynomials by  $ \mu_{\sa} \cdot  M$ for a certain $M>0$.  This gives:
  \begin{equation*}
    \|  D^r  (h_{\sigma(\sa)}\circ f_{\sa}) \|_{0}   \le  \| D^r_{f_{\sa}}  h_{\sigma(\sa)}  ( Df_{\sa})^{\otimes r}\|_{0}+  
    \mu_{\sa} \cdot   M  \sum_{k<  r} \| D^k h_{\sigma(\sa)}\|_0 \; .
\end{equation*}

  Consequently on $\B_{\delta}$ with $\delta < \delta'$ we have:
  \[ \|D^r T_f(h)_\sa \|_{0} \le  \mu_{\sa}^{-1}  \| Df_{\sa} |\B_{\delta'}  \|_0^r \cdot \|  D^r  (h_{\sigma(\sa)}) \|_{0}  +      M  \sum_{k<  r} \| D^k h_{\sigma(\sa)}\|_0 \; . \]
  So by \eqref{eqn:delta_bound} we have
  \begin{equation*}
    \|D^r T_f(h)_\sa \|_{0} \leq  (\mu_{\sa}^{-1}  \| Df_{\sa} |\B_{\delta'}  \|_0^r  + r \delta M)  \cdot \|  D^r  (h_{\sigma(\sa)}) \|_{0} 
  \end{equation*}
  or 
  \begin{equation}
    \label{eqn:bound_flat_contraction}
    \|T_f(h) \|_{V} \leq  (C + r \delta M)  \cdot \| h \|_{V} \; .
  \end{equation}
  Hence if $\delta$ is small enough such that $C + r \delta M < 1$, we have that $T_f$ is contracting on $V_{\delta}$.

Hence for $\delta$ small enough, we have a unique solution in $W_{\delta}$ for the conclusion of our Lemma. By compactness of $\sA$ and the local inverse function theorem, we can assure that the solution is a family of $C^r$-diffeomorphisms by restricting to a smaller $\delta$. %

\end{proof}
\begin{remark}
  The operator $T_f$ and the constants $C, \delta, M$ depend continuously on $f$. So by \cref{lem:smooth_contraction}, the map $h$ depends continuously on $f$.
  \label{rem:lin_flat_continuous}
\end{remark}
We are now ready to give proof of our main statement in the setting of finite regularity.
\begin{proof}[Proof of Corollary~\ref{cor:lin_no_param}]
  This is a direct composition of Lemma~\ref{lem:formal_no_param} and Lemma~\ref{lem:lin_flat} and the extension argument we used in the proof of the parameterless version of \cref{thm:lin_no_param_smooth}.
  The continuity follows from \cref{rem:lin_formal_continuous} and \cref{rem:lin_flat_continuous}.
\end{proof}
Let us now mention how to obtain a proof of \cref{thm:lin_no_param_smooth} without parameters from Corollary~\ref{cor:lin_no_param}.
\begin{proof}[Proof of \cref{thm:lin_no_param_smooth} without parameters from Corollary~\ref{cor:lin_no_param}]
  By \cref{rmk:h_r_4} there is an $r$ such that $f$ satisfies $(H^r_4)$. Then $f$ satisfies $(H^q_4)$ for every $q \geq r$. So for every $q \geq r$, we
  get by \cref{cor:lin_no_param} a family of $C^q$-diffeomorphisms $(h_{q,\sa})_{\sa \in \sA}$ satisfying \eqref{eqn:coro_finite_reg}. By uniqueness for $(h_{r,\sa})_{\sa \in \sA}$ the families must all agree. In particular, $(h_{r,\sa})_{\sa \in \sA}$ is in fact smooth.
\end{proof}
\section{Parameter dependence and proof of the application}
In this section, we specify the parameter dependence in the tame smooth and analytic setting. We show how to obtain the
parametric version of our application from the parametric version of our main theorem.
\subsection{Background on tame smooth maps}
\label{sec:frechet}
Let us now specify the norm on the spaces that we consider:
\begin{definition}
  For any $r < \infty$,
  we endow the space of maps $C^r(\B, \R^n)$ with the norm $\|h\|_{r} = \max_{0 \le i \le r} \sup_{x \in \B} \|D_x^{i}h\|$,
  and the space $C^0(\sA,C^r(\B,\R^n))$ with the norm $\|(h_{\sa})_{\sa \in \sA}\|_{r} = \max_{\sa \in \sA} \|h_{\sa}\|_{r}$. 
  \label{def:norms_banach}
\end{definition}
The spaces $C^r(\B,\R^{n})$ and $C^{0}(\sA,C^r(\B,\R^{n}))$ endowed with these norms are Banach spaces.
\begin{definition}
  We endow the spaces $C^\infty(\B,\R^n)$ and $C^0(\sA,C^\infty(\B,\R^n))$  with the family of norms $(\|\cdot\|_{r})_{r\geq 0}$.
  \label{def:norms_frechet}
\end{definition}
The spaces $C^\infty(\B,\R^n)$ and $C^{0}(\sA,C^\infty(\B,\R^{n}))$ endowed with the given families of norms are graded Fréchet spaces:
\begin{definition}
  A \emph{graded Fréchet space} $F$ is a topological vector space endowed with an increasing family of norms $(\|\cdot\|_{r})_{r\geq 0}$ generating the topology of $F$, such
  that every sequence that is Cauchy with respect to all norms $(\|\cdot\|_{r})_{r \geq 0}$ has a limit. If $F$ and $G$ are graded Fréchet spaces, then
  their direct product $F \times G$ is also a graded Fréchet space endowed with the family of norms $\|(f,g)\|_{r} = \|f\|_r + \|g\|_r$.
\end{definition}
\begin{example}
  Every Banach space $\cal B$ can be considered as a graded Fréchet space with the constant family of norms $\| \cdot \|_{r} = \| \cdot \|_{\cal B}$.
\end{example}
\begin{definition}
  Let $F, G$ be graded Fréchet spaces, $W \subset F$ be open. Let $\psi \colon W \rightarrow G$ be a continuous map. We say that $\psi$ is $C^{1}$ if
  there is a continuous function $D\psi \colon W \times F \rightarrow G$ such that for every $w \in W, h \in F$, we have that
  \begin{equation*}
    \lim_{t \rightarrow 0} \frac{\psi(w+th)-\psi(w)}{t} = D\psi(w,h) \; .
  \end{equation*}
  We say that $\psi$ is $C^{r+1}$ if $D\psi$ is $C^r$. We say that $\psi$ is \emph{smooth} if $\psi$ is $C^{r}$ for every finite $r$.

  We say that $\psi$ is \emph{tame} if for every $w \in W$, there is a neighborhood $W' \subset W$ and $d, b \in \N$ such that
  for every $r \geq b$ there exists $C_r > 0$ satisfying $\|\psi(\tilde w)\|_r \leq C_r(1 + \|\tilde w\|_{r+d})$ for all $\tilde w \in W'$.

  We say that $\psi$ is \emph{smooth tame} if $\psi$ is smooth and $D^r\psi$ is tame for all $r < \infty$.
  \label{def:smooth}
\end{definition}
\begin{example}
  Let $F$ be a graded Fréchet space, $W \subset F$ be open and $\cal B$ be a Banach space. Then
  every continuous map $\psi \colon W \rightarrow \cal B$ from an open subset $W$ is tame (see \cite[Example II 2.1.4]{Hamilton82}).
  In particular, if $\psi \colon W \rightarrow \cal B$ is smooth, then every derivative $D^r \psi$ is a continuous map into $\cal B$,
  so it is tame as well. So smooth maps into Banach spaces are always smooth tame.
  \label{exa:smooth_banach}
\end{example}
The following interpolation inequality is useful in establishing tameness:
\begin{lemma}
  There exists a family of positive constants $(I_j)_{j \geq 1}$ such that for every $h \in C^{\infty}(\B,\R^{n})$ we have
 \begin{equation}
   \| h \|_k \leq I_j  \| h \|^{\frac{j-k}{j-1}}_j\\| h \|^{\frac{k-1}{j-1}}_j \quad  \forall 1 \leq k \leq j .
   \label{eqn:interpolation}
 \end{equation}
  \label{lem:interpolation}
\end{lemma}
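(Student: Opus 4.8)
The plan is to reduce this to the classical Gagliardo–Nirenberg interpolation inequalities for derivatives on a bounded domain with Lipschitz boundary, which the closed unit ball $\B$ certainly satisfies. The statement to prove is that for $1 \le k \le j$ one has $\|h\|_k \le I_j \|h\|_1^{(j-k)/(j-1)} \|h\|_j^{(k-1)/(j-1)}$, where $\|h\|_r = \max_{0 \le i \le r} \sup_\B \|D^i h\|$. First I would reformulate in terms of the homogeneous seminorms $|h|_i := \sup_\B \|D^i h\|$, noting $\|h\|_r = \max_{i \le r} |h|_i$; it then suffices to prove the convexity-type bound $|h|_k \le C_{j,k} \|h\|_1^{(j-k)/(j-1)}\|h\|_j^{(k-1)/(j-1)}$ for each intermediate order $1 \le k \le j$, and then take the max over $k$ and absorb constants into a single $I_j$.

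The core analytic input is the Gagliardo–Nirenberg–Ehrling inequality on a bounded Lipschitz domain $\Omega$: for $0 \le \ell < m$ there is a constant $C$ with
\begin{equation*}
  \|D^\ell h\|_{L^\infty(\Omega)} \le C \, \|D^m h\|_{L^\infty(\Omega)}^{\ell/m} \, \|h\|_{L^\infty(\Omega)}^{1-\ell/m} + C\,\|h\|_{L^\infty(\Omega)} \; ,
\end{equation*}
and more generally the ``multiplicative'' form interpolating an intermediate derivative $D^k$ between $D^1$ and $D^j$. I would apply this with $\Omega = \B$, $m = j$, $\ell = k$, but anchored at the first derivative rather than at $h$ itself: applying the standard inequality to each component of $Dh$ gives $|h|_k = |Dh|_{k-1} \le C |Dh|_{j-1}^{(k-1)/(j-1)} |Dh|_0^{1-(k-1)/(j-1)} + C|Dh|_0 \le C |h|_j^{(k-1)/(j-1)} |h|_1^{(j-k)/(j-1)} + C|h|_1$. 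To eliminate the additive lower-order term one uses that when $k \ge 2$ the factor $|h|_1 = |h|_1^{(j-k)/(j-1)}|h|_1^{(k-1)/(j-1)}$ is itself dominated, up to the ordering of norms $|h|_1 \le \|h\|_1 \le \|h\|_j$, by the product on the right; so after enlarging the constant we obtain the clean multiplicative bound. The case $k=1$ is trivial (both exponents reduce it to $\|h\|_1 \le \|h\|_1$), and $k=j$ is likewise immediate.

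The main obstacle is purely bookkeeping: matching the normalization conventions. The paper's norms are $C^r$-type maxima over all orders $\le r$, whereas Gagliardo–Nirenberg is naturally stated for a single homogeneous seminorm, and the ball has radius one so there are no scaling issues but one must be careful that constants depend only on $j$ (and $n$), not on $h$ — this is automatic since $\B$ is fixed. One should also remark that the inequality is only asserted for $h \in C^\infty(\B,\R^n)$, so no density or trace subtleties arise. Thus I would cite a standard reference for Gagliardo–Nirenberg on bounded Lipschitz domains (e.g. \cite{Hamilton82}, where exactly such interpolation estimates for $C^r$-norms on a ball are recorded, or Nirenberg's original paper), perform the reduction above, and collect the finitely many constants $C_{j,k}$, $1 \le k \le j$, into $I_j := \max_k C_{j,k}$.
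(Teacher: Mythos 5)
The paper offers no proof for this lemma at all — only a citation to Hamilton's Theorem II.2.2.1, which records the log-convexity of the $C^r$-norms on a ball — so your self-contained derivation is necessarily a different route. (Note, in passing, that the displayed formula in the paper's statement carries a typo: the first factor should read $\|h\|_1^{(j-k)/(j-1)}$, not $\|h\|_j^{(j-k)/(j-1)}$, as is clear from how the lemma is used to deduce \eqref{eqn:interpolation_2}; you correctly reconstruct the intended statement.) Your plan — apply Landau--Kolmogorov / Gagliardo--Nirenberg sup-norm interpolation on a bounded Lipschitz domain to $Dh$ with orders $(k-1,j-1)$, so that the interpolation is anchored at $|h|_1$ rather than $|h|_0$, then absorb the additive low-order term via $|h|_1=|h|_1^{(j-k)/(j-1)}|h|_1^{(k-1)/(j-1)}\le\|h\|_1^{(j-k)/(j-1)}\|h\|_j^{(k-1)/(j-1)}$ — is sound and gives the seminorm bound $|h|_k\le C_{j,k}\,\|h\|_1^{(j-k)/(j-1)}\|h\|_j^{(k-1)/(j-1)}$. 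Conceptually this is the same family of ideas that underlies Hamilton's proof, which you have simply made explicit; the trade-off is a longer but self-contained argument in place of a one-line citation.

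One step is left implicit, and your phrase ``take the max over $k$'' is a bit loose: the lemma bounds the full norm $\|h\|_k=\max_{0\le i\le k}|h|_i$, whereas you derive a bound only for the top seminorm $|h|_k$. To finish, you also need $|h|_i\le C\,\|h\|_1^{(j-k)/(j-1)}\|h\|_j^{(k-1)/(j-1)}$ for each $i<k$, with the exponents attached to $k$ rather than $i$. This follows from your seminorm bound at level $i$ together with the monotonicity $\|h\|_1\le\|h\|_j$, since for $i\le k$ one has $\|h\|_1^{(j-i)/(j-1)}\|h\|_j^{(i-1)/(j-1)}\le\|h\|_1^{(j-k)/(j-1)}\|h\|_j^{(k-1)/(j-1)}$; the levels $i\in\{0,1\}$ are handled directly by $|h|_i\le\|h\|_1\le\|h\|_1^{(j-k)/(j-1)}\|h\|_j^{(k-1)/(j-1)}$. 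With that one line added, and $I_j:=\max_{1\le k\le j}$ of the resulting constants, the argument is complete.
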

For a proof see \cite[Theorem II.2.2.1]{Hamilton82}.
\begin{lemma}
  The following map is smooth tame:
  \begin{align*}
    \Upsilon \colon C^\infty(\B,\R^{n}) \times C^\infty(\B,\B) &\rightarrow C^\infty(\B,\R^{n}) \\
    (h,f) &\mapsto (h \circ f) \; .
   \end{align*}
  \label{lem:comp_smooth_tame}
\end{lemma}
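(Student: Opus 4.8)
The plan is to prove that the composition map $\Upsilon(h,f) = h \circ f$ is smooth tame by separately handling tameness of $\Upsilon$ itself, smoothness (computing the derivatives explicitly), and then tameness of every derivative $D^r\Upsilon$. The heart of the matter is the Faà di Bruno estimate: for $h \in C^\infty(\B,\R^n)$ and $f \in C^\infty(\B,\B)$, the $k$-th derivative $D^k(h\circ f)$ is a universal polynomial in the derivatives $D^1 f, \dots, D^k f$ composed with the derivatives $D^1 h, \dots, D^k h$ evaluated at $f$. Since $f$ maps into $\B$, which is bounded, each factor $\|D^j h \circ f\|_0 \le \|h\|_k$ for $j \le k$, and each $\|D^j f\|_0 \le \|f\|_k$; collecting terms one gets a bound of the shape $\|h \circ f\|_k \le P_k(\|f\|_k) \cdot \|h\|_k$ for a polynomial $P_k$ depending only on $k$ and $n$. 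This already gives tameness of $\Upsilon$ with $d = 0$ and $b = 0$ on any set where $\|f\|_k$ is bounded, which holds on a suitable neighborhood $W'$ of any $(h_0,f_0)$.

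Next I would establish smoothness. The candidate first derivative is
\begin{equation*}
  D\Upsilon((h,f),(k,g)) = k \circ f + (Dh \circ f) \cdot g \; ,
\end{equation*}
where $Dh \circ f$ denotes the derivative of $h$ evaluated along $f$, a $C^\infty$ map with values in linear maps $\R^n \to \R^n$, applied to the increment $g$. One checks the defining limit in \cref{def:smooth} by a Taylor expansion of $h(f(x)+tg(x)) + t k(f(x)+tg(x))$ around $t=0$, using that the convergence is uniform together with all derivatives in $x$ because $f$ has image in the fixed compact ball $\B$; the error terms are controlled by $\|h\|_{r+1}$, $\|k\|_r$, $\|g\|_{r+1}$ on any neighborhood where $\|f\|_{r+1}$ is bounded. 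Crucially, $D\Upsilon$ is again built out of compositions $k \circ f$, $Dh \circ f$ and pointwise products, so it is continuous, and one can iterate: $D^r\Upsilon$ is a finite sum of terms each of which is a product of one factor of the form $(D^j h \circ f)$ with $j \le r$, several factors of the form $(D^{i} g_\ell \circ f)$ or derivatives of the increment directions, and compositions thereof. Formally this is an induction on $r$: assuming $D^{r}\Upsilon$ has this structure, differentiating once more produces terms of the same type by the chain rule and the Leibniz rule, plus the new term coming from differentiating $f$ in the direction of an increment. Each such term is a composition of smooth tame maps (compositions, multilinear maps, and pointwise multiplication of bounded-domain $C^\infty$ maps are smooth tame), hence $D^r\Upsilon$ is continuous, establishing smoothness.

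Finally I would verify that each $D^r\Upsilon$ is tame. Each summand of $D^r\Upsilon$ is a product of finitely many factors, each factor being a derivative of $h$ or of an increment direction, composed with $f$; applying the Faà di Bruno bound factor by factor gives that its $C^k$ norm is bounded by a polynomial in $\|f\|_k$ times a product of norms $\|h\|_{k+c}$, $\|g_\ell\|_{k+c}$ for some fixed shift $c$ depending only on $r$ (not on $k$). Summing over the finitely many terms, we get $\|D^r\Upsilon((h,f),(g_1,\dots,g_r))\|_k \le C_k(1 + \|(h,f,g_1,\dots,g_r)\|_{k+c})^{N}$ on $W'$; the power $N$ is absorbed by replacing $C_k$ and enlarging, or more cleanly, by first using the interpolation inequality \cref{lem:interpolation} to trade a product of high norms against a single high norm times low-norm factors that are bounded on $W'$. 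This yields a linear-in-the-top-norm bound $\|D^r\Upsilon(\cdot)\|_k \le C_k(1 + \|\cdot\|_{k+c})$, which is precisely the tameness requirement.

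The main obstacle I expect is bookkeeping the combinatorics of the higher derivatives $D^r\Upsilon$ of the composition operator cleanly enough that the Faà di Bruno bounds and the interpolation trick can be applied uniformly: one must make sure the degree shift $c$ and the polynomial degree in the $f$-norms stay bounded independently of the differentiability order $k$ used for the tame estimate, so that $d$ and $b$ in \cref{def:smooth} can be chosen once and for all on a neighborhood. Everything else is a routine (if tedious) chain-rule computation together with the already-recorded interpolation inequality and the fact that $f$ has values in the fixed compact ball $\B$, which is what makes all the "composed with $f$" factors uniformly bounded in terms of the norms of $h$ alone.
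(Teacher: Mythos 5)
Your overall plan matches the paper's: Fa\`a di Bruno for the composition, interpolation to control intermediate derivative norms, then bootstrapping to higher derivatives of $\Upsilon$. But two steps are incorrect as written, and the interpolation inequality, which you present as an optional cleaner alternative, is in fact the only thing that makes the argument go through.

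First, the claim that $\|h\circ f\|_k \le P_k(\|f\|_k)\cdot\|h\|_k$ already gives tameness ``on a suitable neighborhood $W'$'' has a gap. The tameness definition fixes one neighborhood $W'$ that must work for every $r\ge b$, and in the $C^\infty$ Fr\'echet topology no neighborhood of $(h_0,f_0)$ bounds $\|f\|_k$ for all $k$ at once --- a basic open set only controls finitely many norms. So the polynomial factor $P_k(\|f\|_k)$ cannot be swallowed into a constant $C_r$ uniform over a fixed $W'$; the estimate must be reduced to one that is affine in $\|(h,f)\|_{r+d}$.

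Second, the remark that ``the power $N$ is absorbed by replacing $C_k$ and enlarging'' is false: a term $\|\cdot\|_{k+c}^N$ with $N\ge 2$ cannot be bounded by $C_k(1+\|\cdot\|_{k+c})$ on any set where the norm is unbounded. What actually saves the day is a precise exponent cancellation after interpolation: on a $C^1$-neighborhood where $\|h\|_1$, $\|f\|_1$ are bounded, each Fa\`a di Bruno summand $D^k_f h\,(D^{i_1}f\otimes\cdots\otimes D^{i_k}f)$ with $i_1+\dots+i_k=j$ is bounded, after interpolating each factor, by a constant times $\|h\|_j^{(k-1)/(j-1)}\|f\|_j^{(j-k)/(j-1)}$, and the exponents sum to exactly $1$ because
\begin{equation*}
\frac{k-1}{j-1}+\sum_{\ell}\frac{i_\ell-1}{j-1}=\frac{(k-1)+(j-k)}{j-1}=1 \; .
\end{equation*}
Only then does $x^ty^{1-t}\le x+y$ produce the affine bound $\le C_j(\|h\|_j+\|f\|_j)$, which is the tame estimate with $d=0$. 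Merely ``trading a product of high norms against a single high norm times low-norm factors'' does not verify that the surviving exponent on the top norm is at most $1$, and that verification is the entire content of the estimate. The paper makes exactly this exponent accounting explicit, and then handles $D^r\Upsilon$ by observing that it is built from the same tame ingredients (linearity in $h$, composition, products); your by-hand bookkeeping for $D^r\Upsilon$ is fine in spirit but should rest on this exponent-sum argument rather than on the absorption claim.
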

\begin{proof}
  We sketch the key argument of \cite[Lemma II.2.3.4]{Hamilton82} that the map is tame. %
  In order to show that $\Upsilon$ is tame in $(h_0,f_0) \in C^\infty(\B,\R^{n}) \times C^\infty(\B,\B) $, we can work in a $C^1$-neighborhood $\cal W$ of $(h_0,f_0)$ where $\|h\|_1$ and $\|f\|_1$ are uniformly bounded in $\cal W$. We can bound $\|h \circ f\|_0 \leq \|h\|_0 \leq \|h\|_1$. So in order to show tameness, it suffices to bound $\|D^j (h \circ f)\|_0$ for $j \geq 1$ in terms of $C_j(\|h\|_j + \|f \|_j)$ for some constant $C_j > 0$.

We will again use Faà di Bruno's formula, but now in a more explicit fashion: there are constants
$c_{j,k,i_1,\dots,i_k} \in \Z$ such that
  \begin{equation}
  D^j  (h \circ f)  =   
  \sum_{k \leq  j} \sum_{i_1 +  \dots + i_k = j} c_{j,k,i_1,\dots,i_k} D^k_{f}  h \left( D^{i_1}f \otimes \dots  \otimes D^{i_k}f \right) \; .
  \label{eqn:faa_di_bruno}
\end{equation}
(with $c_{j,j,1,\dots,1} = 1$).
By \cref{lem:interpolation} and the fact that $\|h\|_1$ and $\|f\|_1$ are uniformly bounded for $(h,f) \in \cal W$, there exists constants $I'_j > 0$ (depending on $\cal W$, but not on $(h,f) \in \cal W$) such that
 \begin{equation}
   \| h \|_k \leq I'_j \| h \|^{\frac{k-1}{j-1}}_j\; , \quad
   \| f \|_k \leq I'_j \| f \|^{\frac{k-1}{j-1}}_j \quad  \forall 1 \leq k \leq j, (h,f) \in \cal W\; .
   \label{eqn:interpolation_2}
 \end{equation}
 This allows to bound the summands via  
 \begin{equation}
   \|D^k_{f}  h \left( D^{i_1}f \otimes \dots  \otimes D^{i_k}f \right) \| \leq I'^{k+1}_j \| h \|_j^{\frac{k-1}{j-1}} \|f\|_{j}^{\frac{j-k}{j-1}} 
   \leq I'^{k+1}_j (\|h\|_j + \|f \|_j) \; ,
 \end{equation}
 using for the second step the coarse inequality
 \begin{equation}
   x^ty^{1-t} \leq \max(x,y) \leq x + y \quad  \forall x,y \geq 0, t \in [0,1]\; .
   \label{eqn:coarse_amgm}
 \end{equation}
 So we can bound every term of the right hand side of \eqref{eqn:faa_di_bruno} by some constant times $(\|h\|_j + \|f \|_j)$. This shows that $\Upsilon$ is tame. 

To show that $\Upsilon$ is smooth tame, we observe that it is linear in h and its derivative w.r.t. is a a polynomial of derivatives of  h composed with f and f, and so a composition of tame smooth operations. %
\end{proof}
\subsection{Smooth dependence of application}
\label{sec:dependence_application}
We would like to give a parametric version of \cref{cor:application}.   \begin{coro}
    Let $K, g$ as in \cref{cor:application}.
    Then $\varphi$ depends tamely smooth on $g$. More precisely, there exists a neighborhood $V$ of $K$,
    a trivialization $TM|V \cong V \times \R^n \subset \R^n \times \R^n$ and
    a $C^{\infty}$-neighborhood $\cal U$ of $g$ such that
    $\varphi = \varphi_g$ given by \cref{cor:application} extends to a map in $C^0(\Inv K,C^{\infty}(\B, V)) \subset C^0(\Inv K,C^{\infty}(\B, \R^n))$
    and depends tamely smooth on $g \in U$.
    \label{cor:application_refined}
  \end{coro}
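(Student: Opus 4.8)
The plan is to trace the construction of $\varphi_g$ in the proof of \cref{cor:application} and check that each step is a tame smooth operation in $g$, then compose these operations using the fact that compositions of tame smooth maps are tame smooth. The first step is to fix, once and for all, the auxiliary data that does \emph{not} depend on $g$: since $K$ is a Cantor set and $\Inv K$ is a Cantor set over which the splitting $F_1 \oplus \dots \oplus F_n$ is trivial, we choose a neighborhood $V$ of $K$ with a trivialization $TM|V \cong V \times \R^n$ and a continuous family of charts $(\Phi_\sa)_{\sa \in \Inv K}$ as in the original proof, but adapted to the \emph{unperturbed} splitting of $g$; for $\tilde g$ in a small $C^1$-neighborhood $\cal U$ of $g$ the hyperbolic continuation $K_{\tilde g}$ and its splitting vary continuously, and since we only need the images $\psi_\sa(\B)$ to be contained in a fixed compact set, a single choice of $\cal U$ and scaling parameter $s > 0$ works uniformly. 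The point is that $(\Phi_\sa)_\sa$ and $s$ are \emph{fixed}, so they contribute only constants.

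Next I would express the fiber maps $f_{\sa} = f_{\sa,\tilde g}$ as the output of tame smooth operations applied to $\tilde g$. The map $\tilde g \mapsto (\psi_\sa^{-1} \circ \tilde g \circ \psi_{\sigma(\sa)})_\sa$ is tame smooth because pre- and post-composition by the \emph{fixed} smooth families $\psi_\sa^{-1}, \psi_{\sigma(\sa)}$ is tame smooth by (the family version of) \cref{lem:comp_smooth_tame}, and taking the fiberwise inverse is tame smooth — this last point uses that inverting a family of diffeomorphisms that are uniformly bounded away from the boundary of their domain, with uniformly bounded first derivatives and inverses, is a tame smooth operation (this is the implicit function theorem in the tame Fréchet category, \cite[Section III]{Hamilton82}, or can be reduced to \cref{lem:comp_smooth_tame} by writing the inverse as a fixed point). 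One also checks that $\tilde g \mapsto (D_0 f_{\sa,\tilde g})_\sa = (\diag \|D_{\sigma(\sa_0)}\tilde g|F_{i,\sa,\tilde g}\|^{-1})_\sa$ is tame smooth — it is in fact a continuous map into a finite-dimensional-per-fiber space, hence tame by \cref{exa:smooth_banach}, and smoothness in $\tilde g$ is clear since the splitting depends smoothly on $\tilde g$ by the standard cone-field fixed point argument. Then $\cref{thm:lin_no_param_smooth}$, in its full parametric form (with tame smooth dependence on $f$), gives that $(h_{\sa,\tilde g})_\sa$ depends tamely smooth on $(f_{\sa,\tilde g})_\sa$; composing, $(h_{\sa,\tilde g})_\sa$ depends tamely smooth on $\tilde g \in \cal U$. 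The same is true of the fiberwise inverse $(h_{\sa,\tilde g}^{-1})_\sa$.

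Finally, $\varphi_{\sa,\tilde g} = \psi_{\sa} \circ h_{\sa,\tilde g}^{-1} \circ D_0(\psi_\sa)^{-1}$: the outer maps $\psi_\sa$ and the linear maps $D_0(\psi_\sa)^{-1}$ are \emph{fixed} smooth families, so pre- and post-composition by them is tame smooth by \cref{lem:comp_smooth_tame}, and we conclude that $\tilde g \mapsto (\varphi_{\sa,\tilde g})_\sa$ is tame smooth as a composition of tame smooth operations. Restricting to $B_{\sa_0}(r)$ for $r$ small (uniformly over $\cal U$, by compactness, exactly as in \cref{cor:application}) and recording that the images land in $V$ gives the stated conclusion that $\varphi_g$ extends to $C^0(\Inv K, C^\infty(\B,V)) \subset C^0(\Inv K, C^\infty(\B,\R^n))$ with tame smooth dependence.

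I expect the main obstacle to be the fiberwise inversion steps — both inverting $(\psi_\sa^{-1}\circ\tilde g\circ\psi_{\sigma(\sa)})_\sa$ to get $f_{\sa,\tilde g}$, and inverting $h_{\sa,\tilde g}$ — since these are the only places where a genuinely nonlinear, non-composition operation enters, and they require either invoking the Nash--Moser / tame inverse function theorem of \cite{Hamilton82} or re-deriving the needed tame estimates by hand (writing the inverse as the fixed point of a contraction and checking the contraction depends tamely smooth on parameters, much as in the proof of \cref{lem:lin_flat}). Everything else is bookkeeping: all other steps are either composition with fixed smooth families (\cref{lem:comp_smooth_tame}), the already-established parametric form of \cref{thm:lin_no_param_smooth}, or maps into finite-dimensional targets which are automatically tame (\cref{exa:smooth_banach}). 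The uniformity of the neighborhood $\cal U$, the scaling $s$, and the radius $r$ over $\tilde g$ follows throughout from the compactness of $K$ and $\Inv K$, exactly as in the non-parametric proof.
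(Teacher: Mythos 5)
There is a genuine gap, and it is exactly at the point you dismiss as contributing ``only constants.'' You propose to fix the charts $(\Phi_\sa)_\sa$, $(\psi_\sa)_\sa$ once and for all, adapted to the \emph{unperturbed} base points $\sa_0$ and splitting of $g$. But then for a perturbation $\tilde g$ the map $\psi_\sa^{-1}\circ\tilde g\circ\psi_{\sigma(\sa)}$ in general does \emph{not} fix $0$ (the invariant Cantor set moves to its hyperbolic continuation $K_{\tilde g}$, so $\tilde g(\sigma(\sa)_0)\neq\sa_0$), and its differential at its actual fixed point is not diagonal in the fixed frame (the invariant splitting also moves). Hence $(f_{\sa,\tilde g})$ fails hypotheses $(H_1$--$H_2)$ and \cref{thm:lin_no_param_smooth} cannot be applied. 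Your proposal is in fact internally inconsistent on this point: later you write $D_0 f_{\sa,\tilde g}=\diag\|D_{\sigma(\sa_0)}\tilde g|F_{i,\sa,\tilde g}\|^{-1}$ with the splitting $F_{i,\sa,\tilde g}$ of $\tilde g$, which is only diagonal if the charts are adapted to the $\tilde g$-splitting and centered at the $\tilde g$-continuation of $\sa_0$ --- i.e.\ if the charts are \emph{not} fixed.

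The paper's proof therefore has to, and does, let $\psi_\sa=\psi_{\sa,\tilde g}$ vary with $\tilde g$, and the substantive content is to show that this variation is tame smooth. This is done in \cref{lem:dependance_expanding} (the hyperbolic continuation $\psi_{\tilde g}\colon K\to M$ depends tame smoothly on $\tilde g$, obtained as the fixed point of a contraction $S_{\tilde g}$ on sections of $TM|K$) and \cref{lem:dependence_splitting} (the continuations of $E_i$ and $G_i=F_i\oplus\dots\oplus F_n$, hence of $F_i=E_i\cap G_i$, depend tame smoothly on $\tilde g$, again via contractions in Grassmannian charts and \cref{lem:smooth_contraction}); tameness is then automatic from \cref{exa:smooth_banach} since the targets are Banach. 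These two lemmas are the missing ideas in your argument; ``the splitting depends smoothly on $\tilde g$ by the standard cone-field fixed point argument'' is precisely the claim that needs a proof here, not a remark. Your identification of the fiberwise inversions as requiring a tame inverse function theorem or a hand-made contraction argument is a fair observation (the paper is itself terse on that step), but it is not the principal obstacle: without the tame smooth continuation of the base points and the splitting, the construction of $f_{\sa,\tilde g}$ never gets off the ground.
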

  Note that the statement is slightly different to \cref{cor:application} as we linearize $g$ by a linear map on $\B$. This has the advantage that
  the statement is simpler for the perturbation.

  We begin our discussion by showing smooth dependence of hyperbolic continuation:
\begin{lemma}
  Let $K$ be an expanding compact set for a $C^1$-map $g \colon M \rightarrow M$. Then there exists a $C^{1}$-neighborhood $\tilde \cU$ of $g$ such that the hyperbolic continuation
  $\psi_{\tilde g} \colon K \rightarrow M$ depends tame smoothly on $\tilde g \in \tilde \cU$. 
  \label{lem:dependance_expanding}
\end{lemma}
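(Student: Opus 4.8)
The plan is to realize the hyperbolic continuation $\psi_{\tilde g}$ as the fixed point of a contraction operator acting on a suitable Banach space, and then invoke the abstract result on smooth/tame dependence of fixed points on parameters (the lemma cited as \cref{lem:smooth_contraction} in the excerpt). First I would fix a finite cover of $K$ by charts and a trivialization $TM|_V \cong V \times \R^n$ over a neighborhood $V$ of $K$, so that $g$ restricted near $K$ can be viewed, via local inverse branches, as a skew product of expansions over the shift on $\Inv K$ — exactly the setup already used in the proof of \cref{cor:application}. Structural stability of expanding sets (\cite{Shub_1969}, \cite[Thm 0.1]{berger_persistence_2010}) guarantees that for $\tilde g$ in a $C^1$-neighborhood $\tilde\cU$ of $g$, the continuation $\psi_{\tilde g}$ exists and is $C^0$-close to the inclusion; the point here is to track its dependence on $\tilde g$.

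Concretely, I would work on the Banach space $E := C^0(\Inv K, \R^n)$ (continuous families of points in the chart, one for each element of the inverse limit, landing in the relevant local coordinates), with the sup norm. For $\tilde g$ near $g$ define the graph transform / substitution operator
\begin{equation*}
  \mathcal{G}_{\tilde g}(\psi)_{\sa} \;=\; \bigl(\tilde g|_{U_{\sigma(\sa)}}\bigr)^{-1}\bigl(\psi_{\Inv g(\sa)}\bigr),
\end{equation*}
where $(\tilde g|_U)^{-1}$ denotes the appropriate local inverse branch of $\tilde g$ selected by the itinerary $\sa$. Because $g$ (hence $\tilde g$, $C^1$-close) is expanding, each such inverse branch is a uniform contraction on its domain, so $\mathcal{G}_{\tilde g}$ is a contraction on a small closed ball around the inclusion section inside $E$, uniformly in $\tilde g \in \tilde\cU$ (shrinking $\tilde\cU$ if needed). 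Its unique fixed point is precisely (the coordinate expression of) $\psi_{\tilde g}$, by the uniqueness clause in structural stability. The dependence $\tilde g \mapsto \mathcal{G}_{\tilde g}$ is smooth and tame: it is built from composition with $\tilde g$ and inversion of $\tilde g$, both smooth tame operations on the relevant Fréchet spaces by \cref{lem:comp_smooth_tame} and the (standard, tame) smoothness of the inversion map $f \mapsto f^{-1}$ on diffeomorphisms; the parameter $\tilde g$ enters only through these. Applying \cref{lem:smooth_contraction} to the family of contractions $(\mathcal{G}_{\tilde g})_{\tilde g \in \tilde\cU}$ yields that the fixed point, hence $\psi_{\tilde g}$, depends tame smoothly on $\tilde g$.

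The main obstacle is bookkeeping rather than anything deep: one must choose the charts and inverse branches so that the operator $\mathcal{G}_{\tilde g}$ is genuinely well-defined on a fixed Banach space independent of $\tilde g$ (this is why we pass to the inverse limit $\Inv K$ — the itinerary selects the branch canonically) and so that its domain is preserved under $\mathcal{G}_{\tilde g}$ for all $\tilde g$ in the neighborhood. Since $K$ is compact, only finitely many charts are needed, the contraction constants are uniform, and the $C^1$-closeness of $\tilde g$ keeps every inverse branch defined on a slightly enlarged ball containing the image of the previous iterate; the only care required is that the trivialization and the branch selection are chosen once and for all before shrinking $\tilde\cU$. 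The promotion from $C^r$ for each finite $r$ to the $C^\infty$/tame statement then follows the same pattern as in the rest of the paper, using that $\mathcal{G}_{\tilde g}$ is a smooth tame operator and quoting the parametrized contraction-mapping result in the Fréchet category.
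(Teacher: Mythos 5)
Your core approach matches the paper's: realize $\psi_{\tilde g}$ as the fixed point of a contraction depending smoothly on $\tilde g$ and then invoke \cref{lem:smooth_contraction}. However, there is one substantive deviation worth flagging and one simplification you missed.

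The paper works directly over $K$, not over $\Inv K$. It uses the Riemannian exponential to build a contraction $S_{\tilde g}$ on the Banach space $\Gamma^0(K, TM|K)$ of $C^0$-sections, with
\[
  S_{\tilde g}(\psi)(k) = \exp^{-1}_k \circ \bigl(\tilde g|_{B_{\epsilon}(g(k))}\bigr)^{-1} \circ \exp_{g(k)}\bigl(\psi(g(k))\bigr) \; .
\]
The inverse branch is already selected canonically by the base point $k$ (it is the unique branch sending a small ball around $g(k)$ back near $k$), so no itinerary and hence no passage to $\Inv K$ is needed. Your version would produce a fixed point indexed by $\Inv K$, and you would owe an extra argument that it only depends on $\sa_0$ and therefore descends to $K$ to give a genuine conjugacy $\psi_{\tilde g}\colon K \to M$; this is true (the operator only involves forward iteration), but it is an avoidable detour. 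The inverse limit is genuinely needed later for continuing the invariant splitting (\cref{lem:dependence_splitting}) and the linearizing charts, where the data really does depend on the backward itinerary, but not here.

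For tameness, the paper's route is simpler than yours: because the fixed point lives in an open subset of a Banach space, smooth dependence automatically upgrades to smooth tame dependence by \cref{exa:smooth_banach}. You instead argue via tameness of composition (\cref{lem:comp_smooth_tame}) and of the inversion $f \mapsto f^{-1}$; the latter is not established in the paper, and invoking it is unnecessary when the target is a Banach space. Your proposal is correct in outline, but adopting the paper's choices — sections over $K$ with the exponential chart, and \cref{exa:smooth_banach} for tameness — removes the extra bookkeeping.
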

\begin{proof}
  We can find an $\epsilon$ small and a $C^{1}$-neighborhood $\tilde \cU$ of $g$ enough such that the following holds:
  \begin{itemize}
    \item The Riemannian exponential map $\exp_{x}(v)$ is defined for $x \in K, v \in T_xM, \|v\| < \epsilon$
      and the mapping $\exp_{x}$ from the $\epsilon$-ball in $T_xM$ to the $\epsilon$-ball $B_\epsilon(x)$ is
      $(1 + \epsilon)$ bi-Lipschitz.
    \item For every $\tilde g \in \cal U$, $k \in K$, $\tilde g$ restricts to an $(1 + \epsilon)^3$ expanding 
      diffeomorphism on $B_{\epsilon}(k)$ with image containing $B_{\epsilon}(g(k))$. 
  \end{itemize}
  Under this assumptions, we can consider the following space of functions:
  let $\Gamma^{0}(K, TM|K)$ be the space of sections of the tangent vector bundle $TM$ over $K$.
  This is a Banach space with the uniform $C^0$-norm
  $\|s\| = \sup_{k \in K} \| s(k) \|_{T_kM}$. Let $\cal V$ be the $\epsilon$-ball in this Banach space.

  Given $\tilde g \in \tilde \cU$, we can consider the operator 

  \begin{align*}
    S_{\tilde g} \colon \cal V &\rightarrow \cal V \\
    (\psi(k))_{k \in K} &\mapsto \left( \exp^{-1}_k  \circ (\tilde g|B_{\epsilon}(g(k)))^{-1} \circ \exp_{g(k)} \psi(g(k)) \right) \; .
  \end{align*}
  By our assumptions, this operator is well-defined and $(1 + \epsilon)^{-1}$ contracting on $\cal V$. For $g$ we have that the zero
  section $s_0 \colon k \mapsto 0$ is a fixed point. By restricting $\tilde \cU$ further, we can assume that $\| S_{\tilde g} (s_0) \| < \frac{\epsilon}{1 + \epsilon}$. Using the Banach fixed point theorem, we obtain that $S_{\tilde g}$ has a unique fixed point in $\cal V$. As $S_{\tilde g}$ is smooth, we get by \cref{lem:smooth_contraction} that the fixed point depends smoothly on $\tilde g$. From this we get that the hyperbolic continuation depends smoothly on $\tilde g$. By \cref{exa:smooth_banach}, the dependence is tame smooth.
\end{proof}
\begin{lemma}
  Let $K$ be an totally projectively hyperbolic expanding Cantor set for a $C^1$-map $g \colon M \rightarrow M$ with splitting $F_1 \oplus \dots \oplus F_n$ over $\Inv K$. Then there exists a $C^{1}$-neighborhood $\tilde \cU$ such that the hyperbolic continuation of the $F_i$ depends tame smoothly on $\tilde g \in \tilde \cU$.
  \label{lem:dependence_splitting}
\end{lemma}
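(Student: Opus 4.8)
The plan is to realize the flag $\{0\}=E_0\subsetneq E_1\subsetneq\cdots\subsetneq E_n=TM|K$ over $K$, and then the invariant line splitting $F_1\oplus\cdots\oplus F_n$ over $\Inv K$, as fixed points of graph‑transform contractions that depend smoothly on $\tilde g$, and to conclude with \cref{lem:smooth_contraction} and \cref{exa:smooth_banach}. First I would reduce to a fixed base: by \cref{lem:dependance_expanding} there is a $C^1$-neighborhood $\tilde\cU$ of $g$ on which the hyperbolic continuation $\psi_{\tilde g}\colon K\to K_{\tilde g}\subset M$ depends tame smoothly on $\tilde g$, and applying it coordinatewise gives $\Inv{\psi_{\tilde g}}\colon \Inv K\to\Inv{K_{\tilde g}}$, conjugating $\Inv g$ to $\Inv{\tilde g}$ and also tame smooth in $\tilde g$. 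Since $K$, hence $\Inv K$, is a Cantor set, every vector bundle over it is trivial; fix once and for all a trivialization $TM|_V\cong V\times\R^n$ over a neighborhood $V$ of $K$ together with a trivialization of $TM|_{\Inv K}$. Transporting the cocycle $D\tilde g$ back to $K$ using $\psi_{\tilde g}$ and reading it in these trivializations yields a continuous family $x\mapsto A^{\tilde g}_x\in\mathrm{GL}_n(\R)$ over $K$ (and its inverse‑limit version over $\Inv K$); since differentiation is a tame smooth operation and $\psi_{\tilde g}$ depends tame smoothly on $\tilde g$, the family $A^{\tilde g}$ depends tame smoothly on $\tilde g\in\tilde\cU$ as a map into the Banach space $C^0(K,\mathrm{Mat}_n(\R))$ (resp.\ $C^0(\Inv K,\mathrm{Mat}_n(\R))$).

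Next I would handle the flag. For $1\le i\le n$, projective hyperbolicity of $K$ at $E_i$ provides a cone field $\cC_i$ around $E_i=E_i^g$ in the bundle of $i$-planes that is invariant and contracted under the graph transform of $g$, and $E_i^{\tilde g}$ is the unique $D\tilde g$-invariant $i$-plane field lying in $\cC_i$. Writing an $i$-plane field $C^0$-close to $E_i$ as the graph of a section $G$ in the unit ball $\cal V_i$ of the Banach space $C^0(K,\mathrm{Hom}(E_i,E_i^{\perp}))$ (in the fixed trivialization), pushforward by $A^{\tilde g}$ defines a graph transform $\cal G_{i,\tilde g}\colon\cal V_i\to\cal V_i$. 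The cone and expansion estimates behind projective hyperbolicity are $C^1$-open, so after shrinking $\tilde\cU$ they hold with constants uniform in $\tilde g$; hence $\cal G_{i,\tilde g}$ is a contraction of $\cal V_i$ with factor bounded away from $1$ uniformly in $\tilde g$, and $\cal G_{i,\tilde g}(0)$ is small. Being assembled from $A^{\tilde g}$ by composition and inversion, $\cal G_{i,\tilde g}$ depends smoothly on $\tilde g$, so \cref{lem:smooth_contraction} gives that its unique fixed point — equivalently $E_i^{\tilde g}$ — depends smoothly on $\tilde g$, and \cref{exa:smooth_banach} upgrades this to tame smooth dependence.

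Then I would refine to the line splitting over $\Inv K$. Set $F_1^{\tilde g}=E_1^{\tilde g}$, pulled back to $\Inv K$. For $2\le i\le n$, $F_i^{\tilde g}$ is the unique $D\tilde g$-invariant line subbundle of $E_i^{\tilde g}$ over $\Inv K$ transverse to $E_{i-1}^{\tilde g}$; projective hyperbolicity makes $D\tilde g$ strictly more expanding on $E_i^{\tilde g}/E_{i-1}^{\tilde g}$ than on $E_{i-1}^{\tilde g}$ — again a $C^1$-open condition, uniform after shrinking $\tilde\cU$ — so $F_i^{\tilde g}$ is the attracting fixed point of the forward graph transform acting on complements of $E_{i-1}^{\tilde g}$ inside $E_i^{\tilde g}$. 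Using the tame smooth dependence of $E_{i-1}^{\tilde g}$ and $E_i^{\tilde g}$ from the previous step to choose a tame smoothly varying reference complement, and reading everything in the fixed trivialization of $TM|_{\Inv K}$, this graph transform becomes a contraction of a ball in a fixed, $\tilde g$-independent Banach space of $C^0$-sections whose coefficients depend smoothly on $\tilde g$; \cref{lem:smooth_contraction} and \cref{exa:smooth_banach} again give that $F_i^{\tilde g}$ depends tame smoothly on $\tilde g$. Transporting back by $\Inv{\psi_{\tilde g}}$, the hyperbolic continuation of each $F_i$ depends tame smoothly on $\tilde g\in\tilde\cU$.

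The hard part will be the bookkeeping in the last two steps: arranging the trivializations and, in the refinement step, the reference complements so that the successive graph transforms are honest self‑maps of a single, $\tilde g$-independent Banach space depending smoothly on $\tilde g$, rather than maps between $\tilde g$-varying spaces; and checking that the cone and differential‑expansion estimates used at each stage hold with uniform constants on one common $C^1$-neighborhood $\tilde\cU$. Once that is in place, smoothness of the dependence is immediate from \cref{lem:smooth_contraction}, and tameness is automatic since all targets are Banach spaces by \cref{exa:smooth_banach}.
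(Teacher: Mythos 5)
Your plan is broadly sound and shares the paper's skeleton (realize the relevant subbundles as fixed points of graph‑transform contractions in a fixed Banach space of sections, then invoke \cref{lem:smooth_contraction} and \cref{exa:smooth_banach}), but there is a directional error in the step constructing the flag. You write that ``pushforward by $A^{\tilde g}$ defines a graph transform $\cal G_{i,\tilde g}$'' on sections of $\mathrm{Hom}(E_i,E_i^{\perp})$ and assert it is a contraction. Projective hyperbolicity at $E_i$ means $D[g]$ on $TM/E_i$ dominates $Dg$ on $E_i$, so $E_i$ is the \emph{slow} subbundle: under the forward (pushforward) graph transform, graphs over $E_i$ grow by the ratio of the normal to tangential expansion, which is $>1$. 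It is the inverse graph transform, $(s(k))_{k} \mapsto (D_{\psi_{\tilde g}(k)}\tilde g)_*^{-1}\, s(g(k))$, that contracts onto $E_i$; moreover, on $C^0(K,\cdot)$ the pushforward is not even well-defined without choosing inverse branches, since $g$ need not be injective on $K$. The rest of that step goes through once the direction is corrected, and this is exactly what the paper does with its operator $SE_{\tilde g}$.

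Your treatment of the line splitting is a genuine alternative to the paper's. You obtain $F_i$ directly as the attracting fixed point (under the \emph{forward} graph transform over $\Inv K$) among complements of $E_{i-1}$ inside $E_i$; the domination needed is precisely projective hyperbolicity at $E_{i-1}$ restricted to $E_i$, so this is correct. The paper instead builds the decreasing flag $G_i = F_i\oplus\cdots\oplus F_n$ as an attracting fixed point of the forward graph transform on $C^0(\Inv K, Gr_{n,n-i+1})$, charted near the $\tilde g$-independent reference $G_i^g$ via $\Gamma^0(\Inv K,\Lin(G_i,G_i^{\perp}))$, and then takes the transverse intersection $F_i = E_i\cap G_i$. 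The advantage of the paper's route is exactly the bookkeeping point you flag at the end: each contraction acts on a single Banach space of sections anchored to a reference bundle that does not move with $\tilde g$, so there is no need to construct a $\tilde g$-dependent chart of ``complements of $E_{i-1}^{\tilde g}$ in $E_i^{\tilde g}$''; smooth dependence of $F_i$ then comes for free from the smoothness of transverse intersections.
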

\begin{proof}
  We do this in two steps: we show that we can realize 
  $E_i$ and $G_i \coloneqq F_i \oplus \dots \oplus F_n$ as fixed points of contracting maps and thus admit a continuation in a small neighborhood of $g$. 
  Then we continue $F_i = E_i \cap G_i$. Note while $E_i$ is defined over $K$, $G_i$ is like $F_i$ defined over $\Inv K$.

  We recall some geometry related to the Grassmannian. We denote by $Gr_{n,i}$ the set of $i$-dimensional
  linear subspaces of $\R^n$. This is a smooth manifold, if we are given $E \subset \R^n$ $i$ dimensional and a complement $F$ $n-i$ dimensional,
  we have charts by graph transforms
  \begin{align*}
    \xi_{E,F} \colon \Lin(E, F) &\rightarrow Gr_{n,i} \\
    s &\mapsto \left\{ (e, s(e)) \colon e \in E \right\} \; .
  \end{align*}

  For convenience, we fix a smooth trivialization of $TM|V$ for a neighborhood $V$ of $K$. 
  In particular, for $\tilde g$ close to $g$, $v \in V$ with $\tilde g(v) \in V$, we have $D_v \tilde g$
  induces a diffeomorphism $(D_v \tilde g)_{*}$ on $Gr_{n,i}$.

  In particular, for $\tilde g$ close to $g$, we have the following map:
  \begin{align*}
    SE_{\tilde g} \colon C^0(K, Gr_{n,i}) &\rightarrow C^0(K, Gr_{n,i}) \\
    (s(k))_{k \in K} & \mapsto 
    \left( (D_{\psi_{\tilde g}(k)} \tilde g)_{*}^{-1} s(g(k)) \right) \; .
  \end{align*}

  We have that $E_i$ is a fixed point of $SE_{g}$. We want to show that $SE_{\tilde g}$ has a fixed point close to $E_i$.
  For this we will use charts depending on $K$ as follows:

  For $k \in K$, we can consider the orthogonal complement of $E_i$ in $TM_k$
  (with the Riemannian metric of $TM_k$), we denote this by $E^{\perp}_i$. 
  Note that $\Lin(E_i, E^{\perp}_i)$ is a normed vector bundle over $K$. We can consider the space of sections
  $\Gamma^0(K, \Lin(E_i, E^{\perp}_i))$. We have an open embedding $\Xi \colon \Gamma^0(K, \Lin(E_i, E^{\perp}_i)) \hookrightarrow  C^0(K, Gr_{n,i})$,
  sending the zero section to $E_i$. 

  We claim that there is an open neighborhood $W \subset \Gamma^0(K, \Lin(E_i, E^{\perp}_i))$
of the zero section such that after restricting $\tilde \cU$, the composition $(\Xi^{-1} \circ SE_{\tilde g} \circ \Xi)|W$ is well-defined and contracting:
  for $g$, the existence of such a $W$ is equivalent to cone conditions. This open set also works for a further restriction of $\tilde \cU$.
  
  Now $\Xi^{-1} \circ SE_{\tilde g} \circ \Xi$ depends smoothly on $\tilde g$, so we can use \cref{lem:smooth_contraction} to get that the resulting
  fixed point depends smoothly on $\tilde g$. So we have constructed a continuation of $E_i(k)$.

  Very similarly, we can construct a continuation of $G_i$.  We can consider the operator
  \begin{align*}
    SG_{\tilde g} \colon C^0(\Inv K, Gr_{n,n-i + 1}) &\rightarrow C^0(\Inv K, Gr_{n,n-i+1}) \\
    (s(\sa))_{\sa \in \Inv K} & \mapsto 
    \left( (D_{\psi_{\tilde g}(\sa_{-1})} \tilde g)_{*} s(\sigma(\sa)) \right)
  \end{align*}
  such that a fixed point of $SG_{\tilde g}$ close to $G_i$ is the continuation of $G_i$. Now $G_i, G^{\perp}_i, \Lin(G_i, G^{\perp}_i)$
  are vector bundles over $\Inv K$. As for $E_i$, we can see in the ``chart'' of sections $\Gamma^{0}(\Inv K,  \Lin(G_i, G^{\perp}_i))$ that
  $G_i$ is the fixed point of a contracting map that can be continued smoothly.

  For $g$, we know that $G_i(\sa)$ has transverse intersection with $E_i(\sa_0)$ for all $\sa \in \Inv K$. Restricting $\tilde \cU$ further, we can assume
  that the continuation of $G_i(\sa)$ still has transverse intersection with the continuation of $E_i(\sa_0)$, so we obtain the continuation of $F_i(\sa) = G_i(\sa) \cap E_i(\sa_0)$.
  Since the continuations of $G_i$ and $E_i$ depend smoothly on $\tilde g$, so does the continuation of $F_i$. By \cref{exa:smooth_banach}, $F_i$ depends tame smooth on $\tilde g$.
\end{proof}

\begin{proof}[Proof of \cref{cor:application_refined}]
  Using the trivialization $TM|V \cong V \times \R^n \subset \R^n \times \R^n$, let us make the construction in the proof of \cref{cor:application} more explicit: the splitting 
  $F_1 \oplus \dots F_n \rightarrow \Inv K$ is trivial, so there is are continuous unit vector sections $u_i$ of $F_i \rightarrow \Inv K$. In particular, for $\sa \in \Inv K$, there is a unique linear automorphism $L_{\sa}$ of $\R^n$ sending the canonical basis vector $e_i$ to $u_i$.  We can then set 
  \[\Phi_{\sa}(x) = \sa_{0} + L_{\sa}(x) \; .\]
  Continuing with the rest of the proof of \cref{cor:application}, we find a $s > 0$ sufficiently small and such that with
  \[\psi_{\sa}(x) = \Phi_{\sa}(s \cdot x) = \sa_{0} + s L_{\sa}(x)\] we have that
  \[f_\sa:=   (\psi_{ \sa }^{-1} \circ g\circ  \psi_{\sigma(\sa)})^{-1} |\B\]
  is well-defined and satisfies hypotheses $(H_1-H_2-H_3)$ of \cref{thm:lin_no_param_smooth}.

  For a small enough $C^{1}$-neighborhood of $g$, we can continue this construction: By \cref{lem:dependance_expanding} and
  \cref{lem:dependence_splitting}, the line bundle $F_i$ depends tame smoothly on $\tilde g$ in some neighborhood of $g$. So $u_i$, $L_{\sa}$, $\Phi_{\sa}$ and finally $\psi_{\sa}$ depend tame smoothly on $\tilde g$. For a small enough neighborhood of $g$ we have then that the construction of $f_{\sa}$ can be continued and depends tamely smooth on $\tilde g$. 
    By \cref{thm:lin_no_param_smooth}, $h$ depends tamely smooth on $\tilde g$.

    We can now set $\varphi_{\sa} = \psi_{\sa} \circ h^{-1}_{\sa}$  that depends smoothly on $\tilde g$. 

\end{proof}
\subsection{Background on holomorphic maps on Banach spaces}
Let $X, Y$ be complex Banach spaces, let $U \subset X$ be an open subset. 
Let us recall that a continuous map $f \colon U \rightarrow X$ is \emph{holomorphic} if 
for every $u \in U$, the map $f$ is complex Fréchet differentiable in $u$, that is, there is a
complex linear map $D_uf \colon X \rightarrow Y$  with
\begin{equation}
  \lim_{v \rightarrow 0} \frac{\|f(u+v)-f(u) - D_{u}f(v)\|}{|v|} = 0 \; .
\end{equation}
We will state a composition lemma for compositions of holomorphic functions. As above, let $\tilde \B$ be the open
unit ball in $\C^n$ centered at $0$. Let $\cH_{\infty}(\tilde \B, \C^n)$ be the space of bounded holomorphic functions from $\tilde \B$ to $\C^n$.
Endowed with the supremum norm, this is a complex Banach space.
Let $\cH_{cc}(\tilde \B, \tilde \B) \subset \cH_{\infty}(\tilde \B, \C^n)$ be the open ball of radius $1$, so the set of functions from $\tilde \B$ into a compact subball of $\tilde \B$.
\begin{lemma}
  The map
  \begin{align*}
    C \colon \cH_{\infty}(\tilde \B, \C^n) \times \cH_{cc}(\tilde \B, \tilde \B)  &\rightarrow \cH_{\infty}(\tilde \B, \C^n) \\
    (h, f) &\mapsto h \circ f
  \end{align*}
  is holomorphic.
  \label{lem:comp_holom}
\end{lemma}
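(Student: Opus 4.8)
The plan is to prove \cref{lem:comp_holom} by reducing holomorphy to a combination of continuity and Gâteaux (one-complex-variable) holomorphy, which is the standard route for maps between complex Banach spaces. First I would recall the criterion: a map $C$ between complex Banach spaces defined on an open set is holomorphic if and only if it is continuous and, for every pair of points $(h,f)$ in the domain and every direction $(\eta,\varphi)$ in $\cH_\infty(\tilde\B,\C^n)^2$, the function of one complex variable $z \mapsto C(h+z\eta, f+z\varphi)$ is holomorphic on a neighborhood of $0$ in $\C$. Continuity of $C$ follows from uniform estimates: if $f$ takes values in a compact subball $\tilde\B_\rho$ with $\rho<1$, then for $f'$ close to $f$ in supremum norm the values of $f'$ stay inside $\tilde\B_{\rho'}$ with $\rho<\rho'<1$, and on $\tilde\B_{\rho'}$ every bounded holomorphic $h$ has derivative bounded by a Cauchy-estimate constant $c(\rho')\|h\|_\infty$; hence $\|h\circ f - h'\circ f'\|_\infty \le \|h - h'\|_\infty + \|h'\|_\infty \cdot c(\rho')\cdot\|f-f'\|_\infty$, which gives joint continuity.

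For the Gâteaux part, fix $(h,f)$ in the domain and a direction $(\eta,\varphi)$, and consider $F(z) := C(h+z\eta, f+z\varphi) = (h+z\eta)\circ(f+z\varphi)$, which is defined for $z$ in a small disc $D_\varepsilon \subset \C$ (small enough that $f+z\varphi$ still maps into a fixed compact subball of $\tilde\B$). I would show $F$ is holomorphic as a map $D_\varepsilon \to \cH_\infty(\tilde\B,\C^n)$ by checking it is continuous (which follows from the continuity of $C$ already established) and that for each fixed $x \in \tilde\B$ the scalar function $z \mapsto F(z)(x) = (h+z\eta)(f(x)+z\varphi(x))$ is holomorphic in $z$ — this is clear, since it is the composition of the holomorphic map $z\mapsto (z, f(x)+z\varphi(x)) \in \C\times\tilde\B$ with the holomorphic map $(z,w)\mapsto h(w)+z\eta(w)$ of several complex variables. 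A continuous Banach-valued function of one complex variable whose evaluations at all points of a norming family (here the point evaluations on $\cH_\infty$) are holomorphic is itself holomorphic — one applies Morera's theorem: for any triangle $\gamma \subset D_\varepsilon$ the Banach-valued integral $\oint_\gamma F(z)\,dz$ has all point-evaluations equal to $0$, hence is the zero function. Thus $F$ is holomorphic, giving the Gâteaux-holomorphy of $C$ in every direction.

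Combining continuity with Gâteaux-holomorphy in all directions yields holomorphy of $C$ in the full Fréchet sense on complex Banach spaces, completing the proof. The main obstacle — though it is more bookkeeping than a genuine difficulty — is making precise the domain of definition: one must verify that $\cH_{cc}(\tilde\B,\tilde\B)$ is genuinely open in $\cH_\infty(\tilde\B,\C^n)$ (an element of it has image in some $\tilde\B_\rho$, $\rho<1$, by compactness of the image, and a supremum-ball of radius $1-\rho$ around it stays inside $\cH_{cc}$), and that along any complex line the perturbed inner map still maps $\tilde\B$ into a compact subball so that the outer map can be evaluated there. Once this is set up, all estimates reduce to the Cauchy inequality on $\tilde\B_\rho$, exactly as in the tame-smooth analogue \cref{lem:comp_smooth_tame}. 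Alternatively, if one prefers to avoid invoking the Gâteaux criterion, one can write down the candidate derivative $D_{(h,f)}C(\eta,\varphi) = \eta\circ f + (Dh\circ f)\cdot\varphi$ and estimate the remainder directly using a second-order Taylor expansion of $h$ on $\tilde\B_\rho$ together with Cauchy bounds on $D^2 h$; I expect the Gâteaux route to be shorter to write out.
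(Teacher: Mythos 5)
Your proposal is correct, and your primary route is genuinely different from the paper's. The paper proceeds by a direct Fréchet-differential computation: it first observes that $C$ is linear in $h$, so it suffices to prove holomorphy in $f$; it then writes down the candidate differential $L_{f,h}(g)(z) = D_{f(z)}h(g(z))$, verifies it is a bounded complex-linear operator via a Cauchy estimate on $Dh$, and bounds the remainder $\|h\circ(f+g) - h\circ f - L_{f,h}(g)\|$ by $\tfrac12\|g\|^2\sup\|D^2h\|$ using a Taylor--Lagrange expansion together with a Cauchy bound on $D^2h$. You instead reduce holomorphy to joint continuity plus Gâteaux holomorphy along complex lines, and prove the latter by testing the Banach-valued one-variable map against the point evaluations on $\cH_\infty(\tilde\B,\C^n)$ and invoking Morera for Banach-valued functions (equivalently, the weak-to-strong holomorphy principle over a norming family). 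Both arguments are sound. The paper's approach is the more elementary and self-contained one (only Cauchy estimates and Taylor expansion are needed) and has the side benefit of producing the explicit Fréchet derivative; your approach is more conceptual and avoids any remainder estimate, but requires citing the general criterion that a continuous, directionally holomorphic map between complex Banach spaces is holomorphic, together with the Banach-valued Morera theorem. You correctly note the direct-remainder alternative at the end of your write-up, with the candidate derivative $D_{(h,f)}C(\eta,\varphi) = \eta\circ f + (Dh\circ f)\cdot\varphi$, whose $\varphi$-part is exactly the paper's $L_{f,h}$; that alternative is what the paper actually does.
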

\begin{proof}
  It is clear that this map is well-defined and continuous. The map is linear in $h$. 
  It is enough to check that the map is holomorphic in $f$. Let $\epsilon > 0$ be such that
  $f(\tilde \B) + \tilde \B_{2\epsilon} \subset \tilde \B$. 
  We claim that for a pair $(f, h) \in \cH_{\infty}(\tilde \B, \C^n) \times \cH_{cc}(\tilde \B, \tilde \B)$ the Fréchet differential in $f$ is given by
  \begin{align*}
    L_{f,h} \colon \cH_{\infty}(\tilde \B, \C^n) &\rightarrow \cH_{\infty}(\tilde \B, \C^n) \\ 
    g(z) &\mapsto D_{f(z)} h (g(z)) \; .
  \end{align*}
  First of all, we can apply the Cauchy integral formula to see that $D h$ is bounded on $f(\tilde \B)$ by $\epsilon^{-2} \| h \|$.
  So $L_{f,h}$ is indeed a bounded operator on $\cH_{\infty}(\tilde \B, \C^n)$.

  If $\|g\| < \epsilon$, we have by the integral Taylor--Lagrange expansion in $h$ that
  \begin{equation}
    \| h \circ (f + g)(z) -  (h \circ f)(z) -  D_{f(z)} h (g(z))\| \leq \frac{1}{2} \|g\|^2\sup_{f(\tilde \B) + \tilde \B_{\epsilon}} \| D^2 h\|  %
  \end{equation}
  But by the Cauchy integral formula, we can bound $\sup_{f(\tilde \B) + \tilde \B_{\epsilon}} \| D^2 h\|$
  by $\epsilon^{-3} \|h\|$. 
  This shows that $L_{f,h}$ is indeed the Fréchet differential. It is complex linear, so the map $C$ is holomorphic.
\end{proof}
\subsection{Holomorphic application}
\label{sec:holomorphic_application}
Let us also mention the holomorphic setting of our application.

For this, let now $M$ be a complex $n$ dimensional manifold and $g$ a holomorphic self-map of $M$. We say that an expanding compact set $K$ of $g$ is \emph{complex totally projectively hyperbolic}
if there exists a flag of invariant complex sub-bundles:
 \[\{0\}= E_0 \subsetneq E_1 \subsetneq \cdots \subsetneq E_{n-1}\subsetneq E_n=TM|K\]
 such that for every $1\le i<n$,  the set $K$ is \emph{projectively hyperbolic} at $E_i$. 

 In this setting, we obtain a splitting
 $F_{1\sa}\oplus F_{2\sa}\oplus \cdots \oplus F_{n\sa}$ of \emph{complex} line bundles  depending continuously on $\sa\in \Inv{K}$. 
\begin{coro} 
  Let $K$ be an expanding Cantor set for a holomorphic self-map $g$ of $M$. Assume that:
\begin{enumerate}[(a)]
\item the set  $K$ is complex totally projectively hyperbolic with splitting 
  $F_{1}\oplus F_{2}\oplus \cdots \oplus F_{n }\to  \Inv K $.
\item    for any $1\le i\le n$  and any   multiindex $\mk=( k_1,\dots, k_n) $ with $|\mk|\neq 1$, we have:
  \[ \| D_{\sa_0}g| F_i\|  \not= \prod_{\ell =1} ^n \| D_{\sa_0}g| F_{\ell}\|^{ k_\ell}\] 
  and the sign of the difference does not depends on $\sa\in  \Inv K $. 
\end{enumerate} 
  
Then for $r > 0$ sufficiently small, there is a unique continuous family  $(\phi_{\sa})_{\sa \in \Inv{K}}$ of biholomorphic charts 
$\phi_{\sa}$ from the closed $r$-ball $B_{a_0}(r)$ of $T_{\sa_0}M$  onto a neighborhood of 
 $a_0 \in M$ such that:

 \begin{equation}
    \varphi_{\sa}(0)=\sa_0,\quad D_0 \varphi_{\sa}  =\id\qand 
    \varphi_{\arr g (\sa)} \circ D_{\sa_0} g  =  g \circ  \varphi_{ \sa } \quad \text{on } B_{\sa_0} (r)\; .
   \label{eqn:cor_app_holomorphic}
 \end{equation}
  \label{cor:application_holomorphic}
  Moreover the same holds true for the hyperbolic continuation of $K$ after holomorphic perturbation of $g$, and the linearization depends homomorphically on the perturbation.  
\end{coro}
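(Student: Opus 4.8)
The plan is to mirror the proof of \cref{cor:application} almost verbatim, replacing each smooth ingredient by its holomorphic counterpart: \cref{thm:lin_no_param_smooth} is replaced by \cref{cor:lin_holomorphic}, the composition lemma \cref{lem:comp_smooth_tame} by \cref{lem:comp_holom}, and the smooth‑dependence results \cref{lem:dependance_expanding} and \cref{lem:dependence_splitting} by holomorphic versions proved the same way. First I would build the auxiliary skew product over $\sA := \Inv{K}$ with $\sigma = (\Inv g)^{-1}$. Since $K$ is a Cantor set, it can be partitioned into finitely many clopen pieces, each contained in a holomorphic coordinate chart of $M$; likewise $\Inv K$ is a Cantor set, so the complex line‑bundle splitting $F_{1}\oplus\dots\oplus F_{n}\to\Inv K$ is trivial and admits continuous unit sections $u_i$. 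Combining a (locally constant, up to the base point) choice of holomorphic chart with the linear map sending the standard basis of $\C^n$ to $(u_i(\sa))_i$, one obtains a continuous family of biholomorphic charts $\Phi_\sa\colon\tilde\B\to M$ with $\Phi_\sa(0)=\sa_0$, with $D_0\Phi_\sa$ carrying $\C\oplus\dots\oplus\C$ to $F_{1\sa}\oplus\dots\oplus F_{n\sa}$, and with $\|D_0\Phi_\sa(e_i)\|$ independent of $\sa$.

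Rescaling, set $\psi_\sa(x)=\Phi_\sa(s\cdot x)$ with $s>0$ small enough that $g|\psi_{\sigma(\sa)}(\tilde\B)$ is an expanding biholomorphism onto its image, which contains $\psi_\sa(\tilde\B)$; then $f_\sa:=(\psi_\sa^{-1}\circ g\circ\psi_{\sigma(\sa)})^{-1}|\tilde\B$ is a well‑defined holomorphic contraction of $\tilde\B$ fixing $0$ with diagonal differential whose $i$‑th entry has modulus $\|D_{\sigma(\sa_0)}g|F_i\|^{-1}<1$ ($K$ being expanding). Conditions $(H_1)$ and $(H_2)$ hold by construction, and $(H_3)$ is precisely hypothesis $(b)$, so \cref{cor:lin_holomorphic} yields a unique continuous family $(h_\sa)_{\sa\in\sA}$ of biholomorphisms of $\tilde\B$ with $D_0h_\sa=\id$, $h_\sa(0)=0$, $h_{\sigma(\sa)}\circ f_\sa=D_0f_\sa\circ h_\sa$. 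Then $\varphi_\sa:=\psi_\sa\circ h_\sa^{-1}\circ(D_0\psi_\sa)^{-1}$ is biholomorphic onto a neighborhood of $\sa_0$, satisfies $\varphi_\sa(0)=\sa_0$ and $D_0\varphi_\sa=\id$, and the conjugacy $\varphi_{\arr g(\sa)}\circ D_{\sa_0}g=g\circ\varphi_\sa$ follows from exactly the computation of \cref{cor:application} (using $\sigma(\arr g(\sa))=\sa$); restricting to $r>0$ small makes $\varphi_\sa$ defined on $B_{\sa_0}(r)$ for all $\sa$, giving \eqref{eqn:cor_app_holomorphic}. Uniqueness of $\varphi$ follows from the uniqueness of $h$ in \cref{cor:lin_holomorphic}.

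For the dependence statement I would work in an open subset, containing $g$, of the complex Banach space of bounded holomorphic maps from a fixed neighborhood of $K$ into a fixed coordinate neighborhood of $M$ (again using the finite clopen decomposition of $K$). Total projective hyperbolicity and the non‑resonance condition $(b)$ are $C^1$‑open (by the cone argument in the text and by \cref{rmk:h_r_4}), so for holomorphic $\tilde g$ sufficiently $C^1$‑close to $g$ the hyperbolic continuation $K_{\tilde g}$ is again complex totally projectively hyperbolic and satisfies $(a)$ and $(b)$. One then redoes \cref{lem:dependance_expanding} and \cref{lem:dependence_splitting} in the holomorphic category: replace the Riemannian exponential by the local holomorphic charts, and the real Grassmannian $Gr_{n,i}$ by the complex Grassmannian $Gr^{\C}_{n,i}$ with its holomorphic graph‑transform charts together with a Hermitian orthogonal‑complement splitting; the operators realizing $\psi_{\tilde g}$, $E_i$, $G_i$ and $F_i$ as fixed points are then holomorphic in $\tilde g$, and the holomorphic analogue of \cref{lem:smooth_contraction} (the fixed point of a holomorphically parametrized uniform contraction depends holomorphically on the parameter) gives holomorphic dependence of $F_i$, hence of $u_i$, $\Phi_\sa$, $\psi_\sa$ and $f_\sa$, on $\tilde g$. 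Feeding this into the holomorphic dependence asserted by \cref{cor:lin_holomorphic} gives holomorphic dependence of $h$, and therefore of $\varphi$, on $\tilde g$.

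The main obstacle I anticipate is setting up the holomorphic dependence correctly: unlike the smooth case there is no canonical holomorphic ``exponential map'', so the fixed‑point operators in the analogues of \cref{lem:dependance_expanding} and \cref{lem:dependence_splitting} must be rebuilt from local holomorphic charts over a clopen cover of the Cantor set, and one must verify that all the parametrized contractions involved (including the graph‑transform operators on $Gr^{\C}_{n,i}$) are genuinely \emph{holomorphic} — not merely smooth — in $\tilde g$, so that the holomorphic version of \cref{lem:smooth_contraction} applies. Everything else is bookkeeping parallel to \cref{cor:application} and \cref{cor:application_refined}.
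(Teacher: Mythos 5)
Your proposal matches the paper's approach: the paper's own proof is the one-line remark that it is the same as the proof of \cref{cor:application} but using \cref{cor:lin_holomorphic} in place of \cref{thm:lin_no_param_smooth}, and your construction of the skew product $f_\sa=(\psi_\sa^{-1}\circ g\circ\psi_{\sigma(\sa)})^{-1}$ and the chart $\varphi_\sa=\psi_\sa\circ h_\sa^{-1}\circ(D_0\psi_\sa)^{-1}$ is exactly that transposition. Your additional discussion of how the dependence lemmas (\cref{lem:dependance_expanding}, \cref{lem:dependence_splitting}) transfer to the holomorphic category, and in particular the observation that the Riemannian exponential must be replaced by local holomorphic charts over a clopen cover, is precisely the bookkeeping the paper leaves implicit and is correct.
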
 
The proof of the corollary is the same as the proof of \cref{cor:application}, but using \cref{cor:lin_holomorphic} instead of the main theorem.

\begin{remark}
  If $M$ has a real structure given by an antiholomorphic involution $\xi_M \colon M \rightarrow M$, and $g$ commutes with $\xi_M$ and $\xi_M$ fixes $K$, then
  $\xi_{M} \circ \varphi_{\sa} \circ \xi_{TM}$ also satisfies \eqref{eqn:cor_app_holomorphic}:
  \begin{equation}
    \xi_M \circ \varphi_{\arr g (\sa)} \circ \xi_{TM} \circ D_{\sa_0} g  = \xi_M \circ \varphi_{\arr g (\sa)} \circ D_{\sa_0} g \circ \xi_{TM} = \xi_M \circ  g \circ  \varphi_{ \sa } \circ \xi_TM = g \circ \xi_{M} \circ \varphi_{\sa} \circ \xi_{TM}.
  \end{equation}
  So by uniquness of $(\varphi_{\sa})_{\sa \in \sA}$, we obtain $\xi_{M} \circ \varphi_{\sa} \circ \xi_{TM} = \varphi_{\sa}$
  From this, we recover real analytic dependence for real analytic maps as promised in \cref{rem:lin_real_analytic}.
\end{remark}
\section{Proof of the parameter dependence of main theorems}
We now give the proofs of \cref{thm:lin_no_param_smooth} and \cref{cor:lin_holomorphic} with full parameter dependence. For \cref{cor:lin_holomorphic}, we work with appropriate Banach spaces of holomorphic maps, while for \cref{thm:lin_no_param_smooth} we have to work with Fréchet spaces. Since the proof of \cref{cor:lin_holomorphic} is closer to the one given in \cref{sec:non_param}, we begin with the holomorphic setting.
\subsection{Holomorphic version of main theorem}
\label{sec:holomorphic}
Here is the holomorphic counterpart of \cref{lem:formal_no_param_smooth}:
\begin{lemma}
   For $r\geq 1$ and every continuous family of holomorphic maps $(f_{\sa})_{\sa \in \sA}$ satisfying $(H_1-H_2-H_3)$, there exists a unique $C^0$-family of polynomial maps $(h_\sa)_{\sa\in \sA}$ of $\C^n$ with degree $\leq r$ such that for every $\sa\in \sA$:
   \begin{equation}
     h_\sa(0)\;, \quad D_0 h_\sa = \id\;, \quad \text{and} \quad h_{\sigma(\sa)} \circ f_\sa(x)= D_0f_\sa \circ  h_\sa(x)+o(x^r) \; .
     \label{eqn:formal_holomorphic}
   \end{equation}
\label{lem:formal_holomorphic}
Moreover $(h_\sa)$ depends holomorphically on $f$.
\end{lemma}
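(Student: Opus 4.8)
The plan is to repeat the proof of \cref{lem:formal_no_param} almost verbatim over $\C$, and then to track the dependence on $f$ at each step. Working in the semigroup of $r$-jets at $0$ of holomorphic maps (equivalently, complex polynomial maps of degree $\le r$), I argue by induction on $j$ from $1$ to $r$: given a $C^0$-family of $r$-jets $(h_\sa)_{\sa\in\sA}$ with $D_0h_\sa=\id$, $h_\sa(0)=0$ and $h_{\sigma(\sa)}\circ f_\sa = D_0f_\sa\circ h_\sa + o(x^{j-1})$, I replace $(f_\sa)$ by $(h_{\sigma(\sa)}\circ f_\sa\circ h_\sa^{-1})$ — legitimate at the level of jets, since a jet with invertible linear part is invertible in the jet semigroup — so that $f_\sa(x)=D_0f_\sa(x)+o(x^{j-1})$, and then look for corrections of the form $h_\sa(x)=x+\sum_{|\mk|=j} q_{\sa,\mk}\cdot x^\mk/\mk!$ with $q_{\sa,\mk}\in\C^n$. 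Exactly as in \cref{lem:formal_no_param}, matching the degree-$j$ terms of $h_{\sigma(\sa)}\circ f_\sa$ and $D_0f_\sa\circ h_\sa$ reduces, for each $|\mk|=j$, to solving \eqref{eqn:fix_op_formal} for $q_{\sa,\mk}\in C^0(\sA,\C^n)$; by $(H_3)$ and compactness of $\sA$ this is the fixed-point equation of an affine operator $O_\mk$ on the complex Banach space $C^0(\sA,\C^n)$ with contracting linear part, which has a unique solution. Iterating over $|\mk|=j$ and then over $j=2,\dots,r$ yields the unique $(h_\sa)$; for $r=1$ there is nothing to prove.

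For the holomorphic dependence I track $f\mapsto (h_\sa)$ through these steps. First, $f\mapsto(\partial^\mk f_\sa(0))_{\sa\in\sA}$ is a bounded $\C$-linear map from the Banach space of continuous families of bounded holomorphic maps into $C^0(\sA,\C^n)$ — boundedness is the Cauchy estimate — hence holomorphic, and likewise $f\mapsto(\lambda_{\sa,i})_\sa$ is holomorphic; by \cref{rmk:h_r_4}, $(H_3)$ (equivalently $(H^r_4)$) is open among families satisfying $(H_1$--$H_2)$, so on a suitable neighborhood of the given $f$ the relevant linear parts of all the $O_\mk$, $|\mk|\le r$, are simultaneously and uniformly contracting. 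Second, composition and inversion of $r$-jets are holomorphic in the coefficients: composition is polynomial, and inversion of a jet with linear part $\id$ is again polynomial in its coefficients (alternatively one may invoke \cref{lem:comp_holom}). Third, the fixed point of an affine operator with uniformly contracting linear part depending holomorphically on a parameter depends holomorphically on that parameter: the iterates $O_\mk^{(m)}(0)$ are holomorphic in $f$ and converge locally uniformly to the fixed point, and a locally uniform limit of Banach-space-valued holomorphic maps is holomorphic (equivalently, apply the holomorphic implicit function theorem to $x-O_{\mk,f}(x)=0$; this is the holomorphic analogue of \cref{lem:smooth_contraction}). Chaining these, each $q_{\sa,\mk}$ and each correction jet depends holomorphically on $f$, hence so does the final $(h_\sa)$, being a finite composition of such jets; passing to polynomial representatives is a linear (hence holomorphic) operation.

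The only genuinely new ingredient compared with \cref{lem:formal_no_param} is this holomorphic dependence of the fixed point of a Banach-space contraction on a holomorphic parameter, together with the bookkeeping needed to pin down a single neighborhood of $f$ on which all the operators $O_\mk$ are simultaneously uniformly contracting — which relies on the openness of $(H_3)$ and the continuity in $f$ of $\sa\mapsto\lambda_\sa$. I expect this to be the only point requiring care, and it is standard; everything else is the real-variable argument of \cref{lem:formal_no_param} read over $\C$.
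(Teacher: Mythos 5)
Your proof is correct and follows essentially the same route as the paper: the paper's proof simply says the argument is the holomorphic analogue of \cref{lem:formal_no_param}, invoking the holomorphic case of \cref{lem:smooth_contraction} for the parameter dependence, which is precisely what you spell out. The extra bookkeeping you supply (holomorphy of jet composition/inversion, openness of $(H_3)$ to fix a uniform neighborhood) is a faithful expansion of the details the paper leaves implicit.
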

\begin{proof}
  The proof is the holomorphic analogue to the proof of \cref{lem:formal_no_param}, invoking the holomorphic case of
  \cref{lem:smooth_contraction} for holomorphic dependence. %
\end{proof}
Let us now show the holomorphic counterpart of \cref{lem:lin_flat}:
\begin{lemma}
  If $(f_\sa)_{\sa\in \sA}$ is a $C^0$-family of holomorphic maps maps satisfying
  $(H_1-H_2-H_3-H^r_4)$ and furthermore
  $f$ satisfies \[f_\sa(x)=  D_0f_\sa (x)+o(x^{r})\; .\]
then there exists  $\delta > 0$, such that there is a unique
$C^0$-family $(h_\sa)_{\sa\in \sA}$  of biholomorphisms $h_\sa$ on the open $\delta$-ball around $0$ in $\C^n$ satisfying for following conditions for every $\sa\in \sA$:
\begin{equation}
  h_\sa(0)=0\;, \quad h(x)= x + o(x^r)\;, \quad \text{and} \quad h_{\sigma(\sa)} \circ f_\sa(x)= D_0f_\sa \circ  h_\sa(x) \; .
  \label{eqn:lin_flat_holomorphic}
\end{equation}
  \label{lem:lin_flat_holomorphic}
  Moreover, $h$ depends holomorphically on $f$.
\end{lemma}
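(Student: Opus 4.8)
The plan is to mimic the proof of \cref{lem:lin_flat} (the finite-regularity real version), replacing the Banach space of $C^r$-families by an appropriate Banach space of holomorphic families and replacing the $C^r$-norm estimate via Faà di Bruno by a direct Cauchy-estimate argument. Concretely, for $\delta \in (0,1)$ let $\tilde\B_{\delta}$ be the open $\delta$-ball in $\C^n$, and let $W_{\delta}$ be the complex Banach space $C^0(\sA, \cH_{\infty}(\tilde\B_{\delta}, \C^n))$ of continuous families of bounded holomorphic maps, with norm $\|h\|_{W} = \sup_{\sa} \sup_{x \in \tilde\B_{\delta}} \|h_{\sa}(x)\|$. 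Inside $W_{\delta}$, let $V_{\delta}$ be the closed subspace of families that are $r$-flat at $0$, i.e.\ $h_{\sa}(x) = o(x^r)$ for all $\sa$; equivalently the first $r$ Taylor coefficients of each $h_\sa$ vanish. As in the real case, the operator
\begin{align*}
  T_f \colon W_{\delta} &\to W_{\delta} \\
  (h_{\sa})_{\sa \in \sA} &\mapsto \bigl( (D_0 f_{\sa})^{-1} \circ h_{\sigma(\sa)} \circ f_{\sa} \bigr)
\end{align*}
is well-defined and linear for $\delta$ small (using $(H_1)$ so that $f_{\sa}(\tilde\B_{\delta}) \subset \tilde\B_{\delta}$), it leaves $\iota + V_{\delta}$ invariant (where $\iota$ is the constant inclusion family), because $f$ is $r$-flat, and a fixed point of $T_f$ in $\iota + V_{\delta}$ solves the conjugacy equation \eqref{eqn:lin_flat_holomorphic}.

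The key point is to show $T_f$ is a contraction on $\iota + V_{\delta}$ for $\delta$ small. For $g \in V_{\delta}$, write $g_{\sigma(\sa)}(f_{\sa}(x))$; since $g_{\sigma(\sa)}$ vanishes to order $r$ at $0$ and $f_\sa$ contracts, I would estimate $\|g_{\sigma(\sa)} \circ f_{\sa}\|_{W}$ by comparing with the homogeneous expansion. Using $(H^r_4)$ and compactness of $\sA$ one gets, exactly as in \cref{lem:lin_flat}, a constant $C \in (0,1)$ and $\delta' > 0$ with $\mu_{\sa}^{-1}\|D_x f_{\sa}\|^r \le C$ on $\tilde\B_{\delta'}$, where $\mu_\sa = \min_i |\lambda_{\sa,i}|$, so that $\|(D_0 f_\sa)^{-1}\| \le \mu_\sa^{-1}$ and the $r$-th order vanishing of $g_{\sigma(\sa)}$ supplies a factor $\|D f_\sa\|^r$ (up to lower-order terms controlled by Cauchy estimates and a factor $\delta$). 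This yields an estimate of the form $\|T_f g\|_{W} \le (C + \delta M)\|g\|_{W}$ for a constant $M$, hence $T_f$ is contracting for $\delta$ small enough. The Banach fixed point theorem gives existence and uniqueness in $\iota + V_{\delta}$; shrinking $\delta$ and invoking the holomorphic inverse function theorem with compactness of $\sA$ upgrades the fixed point to a continuous family of biholomorphisms. Uniqueness of the family satisfying \eqref{eqn:lin_flat_holomorphic} among \emph{all} such families (not just in $\iota + V_{\delta}$) follows because any solution is automatically $r$-flat relative to $\iota$ by comparing Taylor jets, so it lies in $\iota + V_{\delta}$.

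For the holomorphic dependence on $f$, the operator $T_f$ depends holomorphically on $f$: it is built from the composition map of \cref{lem:comp_holom} (holomorphic in both arguments) and from $f \mapsto (D_0 f_\sa)^{-1}$, which is holomorphic since $D_0 f_\sa$ is a holomorphically varying invertible matrix and matrix inversion is holomorphic. The constants $C, \delta, \delta', M$ can be chosen uniformly on a neighborhood of a given $f$. Then \cref{lem:smooth_contraction} (in its holomorphic version, as invoked in \cref{lem:formal_holomorphic}) shows the fixed point, and hence $h$, depends holomorphically on $f$. The main obstacle I anticipate is the contraction estimate: one must make precise how the $r$-fold vanishing of $g_{\sigma(\sa)}$ at $0$ together with the contraction $f_\sa$ produces the decisive factor $\mu_\sa^{-1}\|Df_\sa\|^r < 1$ while keeping the error terms (arising because $g$ is not exactly homogeneous of degree $r$) absorbable into $\delta M$; in the holomorphic setting this is cleanest via Cauchy estimates on $\tilde\B_{\delta}$ rather than via Faà di Bruno, but care is needed to keep all bounds uniform over $\sa \in \sA$ and over the $f$-neighborhood.
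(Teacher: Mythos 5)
Your plan is correct, and it takes a genuinely different route from the paper at the decisive estimate. The paper re-uses the $C^r$-framework of \cref{lem:lin_flat} nearly verbatim: it places $T_f$ on the space $\tilde V_\delta$ of $r$-flat families equipped with the norm $\|g\|_V=\|D^r g\|_0$ and obtains the contraction by the same Fa\`a di Bruno / Taylor--Lagrange computation as in the real case (the text literally says ``a similar computation as in \cref{lem:lin_flat}''). You propose instead the plain sup norm on $V_\delta\subset C^0(\sA,\cH_\infty(\tilde\B_\delta,\C^n))$. That works, and in fact more cleanly than your closing paragraph suggests: the ``error terms absorbable into $\delta M$'' that you worry about do not arise if one invokes the Schwarz lemma instead of bare Cauchy estimates. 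Indeed, a bounded holomorphic $g_\sa$ on $\tilde\B_\delta$ with $g_\sa(x)=o(x^r)$ vanishes to order at least $r+1$ at $0$, so by the Schwarz inequality (applied on complex lines through $0$)
\begin{equation*}
  \|g_\sa(x)\|\le \|g\|_{W}\,\Bigl(\tfrac{\|x\|}{\delta}\Bigr)^{r+1},\qquad x\in\tilde\B_\delta .
\end{equation*}
Combined with $\|f_\sa(x)\|\le L_\sa\|x\|$, where $L_\sa:=\sup_{\tilde\B_\delta}\|Df_\sa\|$, and $\|(D_0f_\sa)^{-1}\|\le\mu_\sa^{-1}$, this gives directly
\begin{equation*}
  \|T_f(g)\|_{W}\le \Bigl(\sup_{\sa}\mu_\sa^{-1}L_\sa^{r+1}\Bigr)\,\|g\|_{W}
\end{equation*}
with no lower-order remainder, and by $(H^r_4)$ together with compactness of $\sA$ the factor is $<1$ once $\delta$ is small. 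The remaining points of your outline --- invariance of $\iota+V_\delta$ under $T_f$ via $r$-flatness of $f$, upgrade of the fixed point to a family of biholomorphisms by shrinking $\delta$, uniqueness among all solutions by the formal comparison of jets, and holomorphic dependence via \cref{lem:comp_holom} and the holomorphic case of \cref{lem:smooth_contraction} --- coincide with the paper's. The trade-off between the two proofs: the paper's $\|D^r\cdot\|_0$-argument keeps the real and holomorphic cases structurally parallel, whereas your sup-norm argument is shorter but specific to holomorphy (the Schwarz improvement has no $C^r$ analogue, which is exactly why \cref{lem:lin_flat} must use the $r$-th derivative norm).
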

\begin{proof} %
  We will adjust the proof of \cref{lem:lin_flat} by choosing the right spaces of holomorphic maps.
  We denote the open $\delta$-ball around $0$ in $\C^n$ by $\tilde \B_{\delta}$.
  Consider the vector space $\tilde W_\delta$ of $C^0$-families $(h_\sa)_{\sa\in \sA}$ of maps $(g_{\sa})_{\sa \in \sA}$ from $\tilde \B_{\delta}$ into $\C^n$ such that $\|D^r g\|$ is uniformly bounded. By $(H_1-H_2)$,
  we have that $f_{\sa}(\tilde \B_{\delta}) \subset \tilde \B_{\delta}$ for $\delta$ small enough. Hence the operator
  \begin{align*}
    T_f \colon \tilde W_{\delta} &\rightarrow \tilde W_\delta \\
    (h_{\sa})_{\sa \in \sA} & \mapsto (D_0f_{\sa})^{-1} \circ h_{\sigma(\sa)} \circ f_{\sa}
  \end{align*}
  is well-defined.
  Let $\tilde V_{\delta}$ be the subspace of $C^0$ families of holomorphic maps on $\tilde \B_\delta$ that satisfy $h_{\sa}(x) = o(x^r)$ for all $\sa \in \sA$. Then $T_f$ leaves $\tilde V_{\delta}$ invariant as we  assumed that $f$ is $r$-flat.

  We consider on $\tilde V_{\delta}$ the norm given by 
  \begin{equation}
    \|g\|_{V} = \norm{D^r g}_{0} \; .
    \label{eq:norm_adelta_holom}
  \end{equation}
  By Taylor--Lagrange expansion around 0, we see that for $k < r$:
\begin{equation}
\label{eqn:delta_bound_holomorphic}  \norm{ D^k  g(x) } \leq  \delta\cdot  \|g\|_{V}\; , \quad\forall x\in \tilde \B_\delta \; .
\end{equation}
This implies that  $(\tilde V_\delta, \|\cdot \|_{V})$ is a Banach space.  

Let us denote by $\tilde \iota \in \tilde W_\delta$ the constant family of the canonical inclusion $\tilde \B_\delta \subset \C^n$, so $\tilde \iota_{\sa}(x) = x$ for all $\sa \in \sA, x \in \tilde \B_{\delta}$.

A similar computation as in \cref{lem:lin_flat} shows that $T_f$ is a contraction on $\tilde V_{\delta}$ for $\delta$ small enough. So we find a fixed point
in $\iota + \tilde V_{\delta}$ that satisfies \eqref{eqn:lin_flat_holomorphic}. By restricting $\delta$ further, we can obtain that the resulting family is a family of biholomorphisms.

For holomorphic dependence, we get by \cref{lem:comp_holom} that $T_f$ depends holomorphically on $f$, and by the holomorphic case of \cref{lem:smooth_contraction} that the fixed point of $T_f$ on $\iota + \tilde V_{\delta}$ depends holomorphically on $f$.
\end{proof}
\begin{proof}[Proof of \cref{cor:lin_holomorphic}]
  This is now the direct composition of the two previous lemmas, together with a similar extension procedure as in the proof of \cref{thm:lin_no_param_smooth}.
\end{proof}
\begin{remark}
  The constructions in \cref{lem:formal_holomorphic} and \cref{lem:lin_flat_holomorphic} preserves real analyticity, so if the family $f_\sa$ is 
  real analytic (i.e.  $f_{\sa}(\tilde \B \cap \R^n) \subset \R^n$), then so are the families $(h_{\sa})$ are real. This shows \cref{rem:lin_real_analytic}.
\end{remark}
\subsection{Smooth dependence for main theorem}
\label{sec:dependence}
\label{sec:dependence_main}
The proof with parameter dependence follows the outline given in \cref{sec:non_param}, we provide a parametric version of \cref{lem:formal_no_param_smooth}
and \cref{lem:lin_flat_smooth}. 

Let $\cF \subset C^{0}(\sA,C^{\infty}(\B,\R^{n}))$ be the subspace of maps $(h_{\sa})_{\sa \in \sA}$ such that $h_{\sa}$ fixes 0 and has diagonal differential at $0$ for all $\sa \in \sA$. This is a closed vector subspace of $C^{0}(\sA,C^{\infty}(\B,\R^{n}))$ and so it is also a graded Fréchet space. Note that the set $\cU$ of maps satisfying the hypotheses $(H_1-H_2-H_3)$ of \cref{thm:lin_no_param_smooth} is an open subset in $\cF$.

We can now state the parametric version of \cref{lem:formal_no_param_smooth}:
\begin{lemma}
  The $C^0$-family of polynomial maps $(h_\sa)_{\sa \in \sA}$ of $\R^n$ with
  degree $\leq r$ constructed in \cref{lem:formal_no_param_smooth} depends
  smoothly tame on $f \in \cU$.
  \label{lem:formal_param}
\end{lemma}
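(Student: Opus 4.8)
The plan is to exploit that the formal construction of \cref{lem:formal_no_param_smooth} depends on $f$ only through a finite jet, so that the whole computation takes place in a Banach space, where smoothness upgrades to smooth tameness for free by \cref{exa:smooth_banach}. Inspecting the proof of \cref{lem:formal_no_param}, the polynomial family $(h_\sa)_\sa$ is built purely from the coefficients $\partial^\mk f_\sa(0)$ with $|\mk|\le r$, via finitely many compositions of $r$-jets and finitely many affine fixed-point problems. Since $r$-jets at $0$ form a finite dimensional space $J_r$ (identified with polynomials of degree $\le r$ as in \cref{lem:formal_no_param}), the $C^0$-families over the compact base $\sA$ of $r$-jets vanishing at $0$ with diagonal linear part form a \emph{Banach} space $\mathcal J_r$, a closed subspace of $C^0(\sA,J_r)$.

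First I would record that the two changes of coordinates are smooth tame. The jet map $\mathrm{jet}_r\colon\cF\to\mathcal J_r$ sending $(f_\sa)_\sa$ to its family of $r$-jets at $0$ is continuous and linear; as a continuous map into a Banach space it is tame by \cref{exa:smooth_banach}, and being linear it is smooth, so it is smooth tame. The inclusion $\mathcal J_r\hookrightarrow\cF$, viewing a family of polynomials inside $C^0(\sA,C^\infty(\B,\R^n))$, is linear and continuous with $\|p\|_k\le C_k\,\|p\|_{\mathcal J_r}$ for every $k$ (the constants stabilizing once $k\ge r$, as higher derivatives of a polynomial of degree $\le r$ vanish); it is thus tame with no loss of derivatives, hence smooth tame. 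By the factorization just described, $f\mapsto(h_\sa)_\sa$ is the composition of $\mathrm{jet}_r$, a map $\Psi$ defined on the open subset $\mathcal V\subset\mathcal J_r$ on which the (finitely many, for $|\mk|\le r$) non-resonance inequalities of $(H_3)$ hold — an open set containing $\mathrm{jet}_r(\cU)$ — and the inclusion $\mathcal J_r\hookrightarrow\cF$. Since smooth tame maps compose (see \cite{Hamilton82}), it remains to prove that $\Psi\colon\mathcal V\to\mathcal J_r$ is smooth as a map between open subsets of Banach spaces.

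Then I would check the smoothness of $\Psi$ by running the same induction on $j=1,\dots,r$ that defines it. Each step first conjugates the current $r$-jet by the $r$-jet $h$ built so far: composition of $r$-jets is a polynomial map $\mathcal J_r\times\mathcal J_r\to\mathcal J_r$, hence smooth, and $r$-jet inversion is a rational map, smooth on the open set where the linear part is invertible — which is where we stay, since $(H_3)$ forces each $D_0f_\sa$ to be invertible and the normalization $D_0h_\sa=\id$ is preserved at every step. The step then solves for the degree-$j$ coefficients $q_{\sa,\mk}$ as the fixed point of the affine operator $O_{\mk}$ on the Banach space $C^0(\sA,\R^n)$; its linear part and affine term are polynomial, hence smooth, functions of the current $r$-jet, and by $(H_3)$ together with compactness of $\sA$ the linear part is a uniform contraction on a neighbourhood, so \cref{lem:smooth_contraction} gives that the fixed point depends smoothly on the current $r$-jet. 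A finite composition of smooth maps between Banach spaces being smooth, $\Psi$ is smooth, hence smooth tame by \cref{exa:smooth_banach}, and composing with the two smooth tame changes of coordinates finishes the proof.

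I expect the only genuine obstacle to be the bookkeeping of the first two paragraphs: making precise that the construction factors through a fixed-order jet, and that along the $r$ induction steps one never leaves the open sets on which $r$-jet inversion and the contraction-fixed-point assignment are smooth. Both facts rest on the two consequences of $(H_3)$ already used in \cref{lem:formal_no_param} — invertibility of $D_0f_\sa$ and uniform contraction of the operators $O_{\mk}$ — so once the factorization is set up the remainder is a routine concatenation of smooth maps between Banach spaces.
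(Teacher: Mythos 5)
Your proof is correct and follows essentially the same approach as the paper's: smoothness of jet composition, smoothness of the fixed points of $O_{\mk,i}$ via \cref{lem:smooth_contraction}, and \cref{exa:smooth_banach} to upgrade to smooth tameness. Your explicit factorization $\cF \xrightarrow{\mathrm{jet}_r} \mathcal J_r \xrightarrow{\Psi} \mathcal J_r \hookrightarrow \cF$ through the Banach space of $C^0$-families of $r$-jets is a cleaner way of organizing the argument than the paper's terse version, and it makes explicit a small point the paper glosses over, namely that the inclusion of the polynomial families back into $\cF$ is itself smooth tame with no derivative loss.
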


\begin{proof}
  Let us first show that $(h_\sa)_{\sa \in \sA}$ depends smoothly on $f \in \cU$:
  as composition of $r$-jets is a smooth operation, the only thing left to show is that the fixed points of $O_{\mk, i}$ depend smoothly on $f \in \cU$. 
  As the operator $O_{\mk, i}$ depends smoothly on $f$, so does their fixed points by \cref{lem:smooth_contraction}. Finally, we use \cref{exa:smooth_banach} to promote smooth dependence to smooth tame dependence.
\end{proof}

Let $\cF_{r} \subset \cF$ be the subspace of maps 
$(h_{\sa})_{\sa \in \sA}$ 
such that $h_{\sa}(x) = D_{0} h_{\sa}(x) + o(x^r)$ for every $\sa \in \sA$. Note that these are all linear conditions, so $\cF_r$ is
really a closed vector subspace of $\cF$. We denote by $\cU_r$ the set of maps in $\cF_r$ satisfying the hypotheses $(H_1-H_2-H_3-H^r_4)$ of \cref{lem:lin_flat_smooth}. This is again an open subset of $\cF_r$.

Let us now state the parametric version of \cref{lem:lin_flat_smooth}:
\begin{lemma}
  If $\mathring f \in \cal U_r$. Then there exists a $\delta > 0$, and a $C^r$-neighborhood $\tilde \cU \subset \cal U_r$ of $\mathring f$ such that
  for every $f \in \cal U'$, there exists a unique
$C^0$-family $(h_{f,\sa})_{\sa\in \sA}$  of $C^\infty$-diffeomorphisms $h_{f,\sa}$ from the $\delta$-ball around 0  into $\R^n$ satisfying for following conditions for every $\sa\in \sA$:
\begin{equation}
  h_{f,\sa}(0) = 0\;, \quad h_{f,\sa}(x) = x + o(x^r)\;, \quad \text{and} \quad h_{f,\sigma(\sa)} \circ f_\sa(x)= D_0f_\sa \circ  h_{f,\sa}(x)\; .
  \label{eqn:lin_flat_smooth_param}
\end{equation}
Moreover, the map $g \mapsto h_{g, \sa}$ is tamely smooth.
  \label{lem:lin_flat_smooth_param}
\end{lemma}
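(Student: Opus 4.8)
The plan is to adapt the contraction argument of \cref{lem:lin_flat} to the parametric setting, treating $f$ as an additional variable living in the graded Fréchet space $\cF_r$ and showing that the fixed-point map $f \mapsto h_f$ inherits tame smoothness from the tameness of composition (\cref{lem:comp_smooth_tame}) together with the abstract smooth-dependence-of-fixed-points result \cref{lem:smooth_contraction}. Fix the base point $\mathring f \in \cU_r$. First I would recall from the proof of \cref{lem:lin_flat} the Banach space $(V_\delta, \|\cdot\|_V)$ of $C^0$-families of $C^r$-maps on $\B_\delta$ that are $r$-flat, and the affine subspace $\iota + V_\delta$. The key point is that the constant $C$ in \eqref{def C finite reg} and hence the constant $C + r\delta M$ controlling the contraction factor can be chosen uniformly on a $C^1$-neighborhood of $\mathring f$: the quantities $\mu_\sa$, $\Lambda_\sa$, $\|D_x f_\sa\|$ depend continuously (in fact $C^1$-continuously) on $f$, so after shrinking to a $C^r$-neighborhood $\tilde\cU$ of $\mathring f$ and choosing $\delta$ small, the operator $T_f$ is a contraction on $\iota + V_\delta$ with a factor bounded away from $1$ uniformly in $f \in \tilde\cU$. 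Banach's fixed point theorem then gives, for each $f \in \tilde\cU$, a unique $h_f \in \iota + V_\delta$ solving \eqref{eqn:lin_flat_smooth_param}; restricting $\delta$ further and using the inverse function theorem plus compactness of $\sA$ (exactly as at the end of the proof of \cref{lem:lin_flat}) promotes the solution to a family of $C^\infty$-diffeomorphisms.

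Next I would address the regularity of $f \mapsto h_f$. Here the ambient spaces are genuinely Fréchet, so I would view $(f, h) \mapsto T_f(h) = (D_0 f_\sa)^{-1} \circ h_{\sigma(\sa)} \circ f_\sa$ as a map defined on an open subset of $\cF_r \times (\iota + V)$ — where now $V$ is the Fréchet space of $C^0$-families of $C^\infty$, $r$-flat maps on $\B_\delta$ with its grading $(\|\cdot\|_k)_{k \ge r}$ — into $\iota + V$. This map is a composite of: (i) post-composition by the $f$-dependent linear map $(D_0 f_\sa)^{-1}$, which is tamely smooth in $f$ since $f \mapsto D_0 f$ is a bounded linear (hence tame) operation and matrix inversion is smooth tame on the open set of invertible matrices; and (ii) the composition operation $(h, f) \mapsto h \circ f$, which is smooth tame by \cref{lem:comp_smooth_tame} (applied fiberwise over $\sA$, with the inner map landing in $\B_\delta \subset \B$, which requires the preliminary observation that $f_\sa(\B_\delta) \subset \B_\delta$). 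Hence $T$ is smooth tame jointly in $(f,h)$, and its fiberwise derivative in $h$ is a uniform contraction on $\tilde\cU \times (\iota + V)$ by the estimate above. I would then invoke the tame version of \cref{lem:smooth_contraction} to conclude that $f \mapsto h_f$ is tamely smooth.

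The main obstacle I anticipate is a matching/bootstrapping issue between the finite-regularity contraction (which runs in the Banach space $V_\delta$ with the single norm $\|\cdot\|_V = \|D^r \cdot\|_0$) and the tame-smooth statement (which must be phrased with the full grading on the Fréchet space of $C^\infty$-families). Concretely, \cref{lem:smooth_contraction} needs the operator to be a contraction in a way compatible with all the higher norms $\|\cdot\|_k$, $k > r$, not just $\|\cdot\|_r$; one cannot simply redo the $D^r$ estimate with $D^k$ because for $k > r$ the factor $\mu_\sa^{-1}\|D f_\sa\|^k$ is even smaller, but the lower-order Faà di Bruno terms $\sum_{j<k} D^j h_{\sigma(\sa)}(P_{\sa,j})$ now involve norms $\|D^j h\|$ with $r \le j < k$ that are \emph{not} controlled by $\delta\|h\|_V$, so the naive argument does not close at higher order. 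The resolution — which I would spell out carefully — is that \cref{lem:smooth_contraction} in the tame-Fréchet formulation only requires a contraction in the lowest norm $\|\cdot\|_r$ (the Banach estimate already established) together with \emph{tame} (polynomially bounded) estimates on $T_f$ and all its derivatives in the higher norms; the latter follow from the smooth-tameness of composition via \cref{lem:comp_smooth_tame} exactly as in Hamilton's treatment. So the real content is to organize the verification of the hypotheses of the tame implicit/fixed-point theorem, rather than to prove a new estimate, and I would make sure the statement of \cref{lem:smooth_contraction} being cited is indeed the tame-Fréchet one covering this situation.
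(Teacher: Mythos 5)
Your setup — uniform contraction constant on a $C^r$-neighborhood, Banach fixed point in $V_\delta$ with $\|\cdot\|_V=\|D^r\cdot\|_0$, then promoting to diffeomorphisms — matches the existence/uniqueness part of the paper. You also correctly identify the real difficulty: for $j>r$ the Faà di Bruno remainder involves $\|D^k h\|$ for $r\le k<j$, which is not controlled by $\delta\|h\|_V$, so the naive single-norm contraction does not extend to higher norms.

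The gap is in your proposed resolution. You invoke a ``tame version of \cref{lem:smooth_contraction}'' that supposedly requires only a contraction in the lowest norm plus tame estimates on $T_f$ and its derivatives, and you claim these tame estimates ``follow from the smooth-tameness of composition via \cref{lem:comp_smooth_tame}.'' No such lemma exists in the paper (\cref{lem:smooth_contraction} is stated strictly for fixed points living in a \emph{Banach} space; only the parameter space is Fréchet), and no general tame fixed-point theorem would close this argument as stated. Mere smooth-tameness of $T$ gives bounds of the form $\|T_f(h)\|_j\le C_j(1+\|f\|_{j+d}+\|h\|_{j+d})$, and iterating such a bound $m$ times does \emph{not} produce a summable series. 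What the paper actually proves (in the intermediate \cref{lem:contraction_higher_order}) is the sharper, structured estimate
\begin{equation*}
\|D^j T_f(h)\|_0 \le C\cdot\|D^j h\|_0 + M_j\sum_{1\le k<j}\|f\|_j^{\frac{j-k}{j-1}}\|D^k h\|_0\;,
\end{equation*}
where the coefficient on the top-order term is the \emph{contraction constant} $C<1$ and the lower-order coefficients are controlled polynomially in $\|f\|_j$. With this, \cref{lem:r_well_defined} constructs $R_f=\sum_{m\ge0}T_f^m=(\id-T_f)^{-1}$ as a Neumann series and proves the crucial bound $\sum_m\|T_f^m(h)\|_j\le\tilde M_j(\|h\|_r\|f\|_j+\|h\|_j)$ by a nontrivial double induction (on $j$ and on $m$) using the interpolation inequalities. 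The paper then obtains $h_f=\iota-R_f(\iota-T_f(\iota))$ and concludes tame smoothness because $R$ and $T$ are smooth tame. So the content you dismissed as ``organizing the verification of hypotheses'' is exactly the new estimate that has to be proved; without it the Neumann series is not even visibly tame, and there is no off-the-shelf theorem to quote.
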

We will proof this lemma in the rest of this section. Let us give a brief overview: the main idea is to follow the lines of the proof of \cref{lem:lin_flat}: we construct an operator $T_f$ and show it has a unique fixed point on some affine space, providing a solution to \cref{eqn:lin_flat_smooth_param}. To this end, we show that $\id - T_f$ is invertible with inverse $R_f$. Estimates similar to the proof of \cref{lem:comp_smooth_tame} intertwined with the contraction of $T_f$ on the $\| \cdot \|_r$-norm  are used to show that $R_f$ is tame. From this we conclude that the fixed point of $T_f$ depends smooth tame on $f$. We start with the construction of $T_f$:
\begin{lemma}
  Let $r \geq 2$, let $\mathring f \in \cU_r$. Then there exist $\delta' > 0$, $0 < C < C' < 1$, a $C^r$-neighborhood $\tilde \cU \subset \cU_r$ of $\mathring f$, and a family $(M_{j})_{j\geq r}$ of positive constants such that with
  
  \begin{equation*}
    \tilde V_{\delta'} = \left\{ h \in C^{0}(\sA,C^\infty(\B_{\delta'},\R^n)): h(x) = o(x^r) \right\}\; , 
  \end{equation*}
the following operator: 
  \begin{align*}
    T_{f} \colon    (h_{\sa})_{\sa \in \sA}\in \tilde V_{\delta'}   \mapsto  ((D_0 f_{\sa})^{-1} \circ h_{\sigma(\sa)} \circ f_{\sa})_{\sa\in  \sA} \in \tilde V_{\delta'}
  \end{align*}
  is well-defined for every $f \in \tilde \cU$ and $T \colon (f,h) \in \tilde \cU \times \tilde V_{\delta'}   \mapsto T_f(h) \in \tilde V_{\delta'}$ is smooth tame. Moreover $T_f$   satisfies the following bounds:
  \begin{equation}
    \|D^r T_{f}(h)\|_{0} \le C'\cdot \|D^r  h \|_{0} \; ,
    \label{eqn:contraction_r}
  \end{equation}
\begin{equation}
  \|D^j T_{f}(h)\|_{0} \le C\cdot \|D^j  h \|_{0} + M_j  \sum_{1 \leq k<j} \| f\|^{\frac{j-k}{j-1}}_{j} \cdot    \|D^k  h \|_{0}\;  \text{ for all } j \geq r\; .
  \label{eqn:contraction_higher_order}
\end{equation}
\label{lem:contraction_higher_order}
\end{lemma}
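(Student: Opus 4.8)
The plan is to re-run the contraction computation from the proof of \cref{lem:lin_flat}, now keeping the differentiation order $j$ general, and to combine it with the tame composition estimate of \cref{lem:comp_smooth_tame}. First I would fix the constants as in \cref{lem:lin_flat}: writing $\mu_\sa=\min_i|\lambda_{\sa,i}|$ and $\Lambda_\sa=\max_i|\lambda_{\sa,i}|$, hypothesis $(H^r_4)$ and compactness of $\sA$ give $C_1\in(0,1)$ with $C_1\mu_\sa>\Lambda_\sa^r$ for all $\sa$; fixing $C\in(C_1,1)$ one then finds $\delta'>0$ with $\mu_\sa^{-1}\sup_{x\in\B_{\delta'}}\|D_xf_\sa\|^r<C$ and $\sup_{x\in\B_{\delta'}}\|D_xf_\sa\|<1$ for all $\sa$. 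All these quantities depend continuously on $f$ in the $C^r$-topology, so after passing to a small enough $C^r$-neighborhood $\tilde\cU\subset\cU_r$ of $\mathring f$ they hold uniformly for $f\in\tilde\cU$, and one may also keep $\sup_\sa\|f_\sa\|_1$ and $\sup_\sa\mu_\sa^{-1}$ bounded on $\tilde\cU$. The point that makes the estimate uniform in $j$ is that, since $\|D_xf_\sa\|<1$ on $\B_{\delta'}$, one has $\mu_\sa^{-1}\|D_xf_\sa\|^j\le\mu_\sa^{-1}\|D_xf_\sa\|^r<C$ for \emph{every} $j\ge r$.

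Well-definedness is immediate: each $f_\sa$ fixes $0$ and is contracting, so $f_\sa(\B_{\delta'})\subset\B_{\delta'}$, and since $f$ is $r$-flat the map $h\mapsto(D_0f_\sa)^{-1}\circ h_{\sigma(\sa)}\circ f_\sa$ preserves the condition $h(x)=o(x^r)$ defining $\tilde V_{\delta'}$. For the two bounds I would expand $D^j(h_{\sigma(\sa)}\circ f_\sa)$ using the explicit Faà di Bruno formula \eqref{eqn:faa_di_bruno}. The unique top-order summand ($k=j$, all $i_\ell=1$) equals $(D_0f_\sa)^{-1}D^j_{f_\sa}h_{\sigma(\sa)}(Df_\sa)^{\otimes j}$, of norm at most $\mu_\sa^{-1}\|Df_\sa|\B_{\delta'}\|_0^j\,\|D^jh\|_0\le C\,\|D^jh\|_0$ by the observation above; this is exactly the computation in \cref{lem:lin_flat} with $j$ in place of $r$. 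Every other summand has $k<j$; I would bound $\|D^k_{f_\sa}h_{\sigma(\sa)}\|\le\|D^kh\|_0$ and $\prod_\ell\|D^{i_\ell}f_\sa\|\le\prod_\ell\|f\|_{i_\ell}$, then apply the interpolation inequality \eqref{eqn:interpolation} between $\|\cdot\|_1$ and $\|\cdot\|_j$, using $\sum_\ell(i_\ell-1)=j-k$ and the uniform bound on $\|f\|_1$, to get $\prod_\ell\|f\|_{i_\ell}\le\mathrm{const}\cdot\|f\|_j^{(j-k)/(j-1)}$; multiplying by the bounded factor $\mu_\sa^{-1}$ and summing over the finitely many multiindices with a given $k$ produces \eqref{eqn:contraction_higher_order}. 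For \eqref{eqn:contraction_r} I would take $j=r$ and, as in \cref{lem:lin_flat}, use the Taylor--Lagrange bound $\|D^kh\|_0\le\delta'\|D^rh\|_0$ for $k<r$, $h\in\tilde V_{\delta'}$ (see \eqref{eqn:delta_bound}), to absorb the lower-order terms into a constant $C':=C+r\delta'M<1$, after shrinking $\delta'$ if necessary, where $M$ bounds the Faà di Bruno coefficient polynomials over $\tilde\cU$.

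For the smooth tameness of $T$, I would factor $T_f(h)_\sa=(D_0f_\sa)^{-1}\circ h_{\sigma(\sa)}\circ f_\sa$ as the composition of three smooth tame operations: the relabeling $(f,h)\mapsto\bigl((f_\sa|\B_{\delta'})_\sa,(h_{\sigma(\sa)})_\sa\bigr)$, which is bounded linear since $\sigma$ is a homeomorphism of $\sA$; the composition map $(g,\phi)\mapsto g\circ\phi$, which is smooth tame by the fibrewise-over-$\sA$ analogue of \cref{lem:comp_smooth_tame} (its estimates being uniform in $\sa$); and left multiplication by the matrix family $f\mapsto((D_0f_\sa)^{-1})_\sa$, which takes values in the open subset $C^0(\sA,GL_n(\R))$ of the Banach space $C^0(\sA,\mathrm{Mat}_n(\R))$ and is smooth there, hence smooth tame by \cref{exa:smooth_banach}, while multiplication by a fixed ($0$-th order) matrix family is order-preserving and therefore tame with loss $d=0$. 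Since smooth tame maps are closed under composition, $T$ is smooth tame.

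The smooth-tameness step is essentially a bookkeeping variant of \cref{lem:comp_smooth_tame} and I expect it to be routine; the delicate point is the uniform-in-$j$ estimate \eqref{eqn:contraction_higher_order}. There the crucial observation is that the top-order Faà di Bruno coefficient $\mu_\sa^{-1}\|Df_\sa\|^j$ stays below the \emph{same} constant $C<1$ for all $j\ge r$ — this is where one uses that the $f_\sa$ are genuine contractions on $\B_{\delta'}$, not merely at the fixed point — and that, after this, the remaining terms must be organised via interpolation into precisely the shape $M_j\sum_{1\le k<j}\|f\|_j^{(j-k)/(j-1)}\|D^kh\|_0$ demanded by the subsequent construction of $R_f=(\id-T_f)^{-1}$.
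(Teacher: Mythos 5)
Your proposal is correct and follows essentially the same route as the paper: the same choice of constants via $(H^r_4)$ and the key observation that $\tilde\mu_\sa^{-1}\|D_xf_\sa\|^j<C$ uniformly in $j\ge r$ because $\|D_xf_\sa\|<1$, the same Faà di Bruno expansion with interpolation giving the $\|f\|_j^{(j-k)/(j-1)}$ factors, and the same reduction of smooth tameness to the composition lemma. The only difference is presentational — you spell out the factorization of $T$ into tame pieces where the paper merely cites the analogy with its composition lemma.
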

\begin{proof}
  The construction of $\delta',$ $C$ and $M$ in \cref{lem:lin_flat} depended only on $C^r$-bounds of $f$. So
  we can find a $C^r$-neighborhood $\tilde \cU \subset \cU_r$ of $f_0$, $\delta' > 0$, $0 < C < C' < 1$ such
  that the following holds:
  \begin{itemize}
    \item for every $f \in \tilde \cU$, $\sa \in \sA$, the map $f_{\sa}$ is contracting on $\B_{\delta'}$,
    \item with $\tilde \mu_{\sa} = \inf_{f \in \tilde \cU} \min_{i} \lambda_{f, \sa, i}$, we have
\[C\cdot \tilde \mu_\sa > \|D_x f _\sa\| ^r\; , \quad \forall  x\in \B_{\delta'}\; .\]
\item The operator $T_f \colon V_{\delta'} \rightarrow V_{\delta'}$ is $C'$-contracting for all $f \in \tilde \cU$, where
  $V_{\delta'}$ is the Banach space 
    $V_{\delta'} = \left\{ h \in C^{0}(\sA,C^r(\B_{\delta'},\R^n)): h(x) = o(x^r) \right\},$ 
  \end{itemize}

In particular $\|D_x f _\sa\|<1 $  for every $x\in \B_{\delta'}$. Thus for every $j\ge r$, we have:
\begin{equation}
\label{eqn:def_C_param}
  C > \tilde \mu_\sa^{-1} \cdot  \|D_x f _\sa\| ^j\; , \quad \forall  x\in \B_{\delta'}, f \in \tilde \cU \; .
\end{equation} 
Since every $f_{\sa}$ is contracting on $\B_{\delta'}$ and fixes 0, we see that $T$ is well-defined. Similar to \cref{lem:comp_smooth_tame} one shows 
  that $T$ is smooth tame.
  Now \cref{eqn:contraction_r} follows from the $C'$-contraction on $V_{\delta'}$.

  We will again use explicit Faà di Bruno's formula \cref{eqn:faa_di_bruno}: there are constants
$c_{j,k,i_1,\dots,i_k} \in \Z$ such that
  \begin{equation*}
  D^j  (h_{\sigma(\sa)}\circ f_{\sa})  =  D^j_{f_{\sa}}  h_{\sigma(\sa)}  ( Df_{\sa})^{\otimes j}+  
  \sum_{k<  j} \sum_{i_1 +  \dots + i_k = j} c_{j,k,i_1,\dots,i_k} D^k_{f_{\sa}}  h_{\sigma(\sa)} \left( D^{i_1}f_{\sa} \otimes \dots  \otimes D^{i_k}f_{\sa} \right)\; .
\end{equation*}

By \cref{lem:interpolation} and that $\|D^{1} f\|_{0} < 1$, there is a positive constant $I_j$ such that for all $f \in \cU$ we have $\|D^{i}f_{\sa}\|_{0} \leq I_j \|f_{\sa}\|_{j}^{\frac{i-1}{j-1}}$. So for $i_1 + \dots + i_k = j$ we can bound 
\begin{equation*}
  \|D^{i_1}f_{\sa}\|_{0} \cdots \|D^{i_k}f_{\sa}\|_{0} \leq I^k_j \|f_{\sa}\|_{j}^{\frac{j-k}{j-1}} \; .
\end{equation*}
So we can find constants $M_j > 0$ such that 
\begin{equation*}
  \sum_{i_1 +  \dots + i_k = j} \| c_{j,k,i_1,\dots,i_k} D^k_{f_{\sa}}  h_{\sigma(\sa)} \left( D^{i_1}f_{\sa} \otimes \dots  \otimes D^{i_k}f_{\sa} \right) \|_0 \leq \tilde \mu_{\sa} M_j \| f\|^{\frac{j-k}{j-1}}_{j} \| D^k h_{\sigma(\sa)}\|_0 \; .
\end{equation*}
     This gives:
  \begin{equation*}
\|  D^j  (h_{\sigma(\sa)}\circ f_{\sa}) \|_{0}   \leq  \| D^j_{f_{\sa}}  h_{\sigma(\sa)}  ( Df_{\sa})^{\otimes j}\|+  
\tilde \mu_{\sa} \cdot   M_j  \sum_{k<  j} \| f\|^{\frac{j-k}{j-1}}_{j}
\| D^k h_{\sigma(\sa)}\|_0 \; .
\end{equation*}
 Consequently:
   \begin{equation*}
     \|D^j T_f(h)_\sa \|_{0} \le  \tilde \mu_{\sa}^{-1}  \| Df_{\sa} |\B_{\delta'}  \|_{0}^j \cdot \|  D^j  h_{\sigma(\sa)} \|_{0}  +      M_j  \sum_{k<  j} \|f\|^{\frac{j-k}{j-1}}_{j} 
     \| D^k h_{\sigma(\sa)}\|_0 \; .
   \end{equation*}
From \cref{eqn:def_C_param}  we obtain the sought result. 
\end{proof} 
\begin{lemma}
  Let $\tilde U, \delta', C, C', M_j$ as resulting from \cref{lem:contraction_higher_order}. The following map is well-defined and tame:
\[R: (f,h)\in \tilde \cU \times \tilde V_{\delta'} \mapsto R_f(h):=:\sum_{m\ge 0}   T^m_f(h) \in \tilde V_{\delta'} \; .\]
In particular, $\id - T_f$ is invertible on $\tilde V_{\delta'}$ with inverse $R_f(h)$.
  \label{lem:r_well_defined}
\end{lemma}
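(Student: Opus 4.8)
The plan is to show that the Neumann series $\sum_{m \ge 0} T_f^m(h)$ converges in the graded Fr\'echet space $\tilde V_{\delta'}$ and defines a tame operator. The key tool is the pair of bounds \eqref{eqn:contraction_r} and \eqref{eqn:contraction_higher_order} from \cref{lem:contraction_higher_order}: the first says $T_f$ is strictly contracting (by the factor $C' < 1$) in the top norm $\|D^r \cdot\|_0$, i.e. in the $\|\cdot\|_r$-seminorm relevant on $\tilde V_{\delta'}$; the second says that in each higher norm $\|D^j\cdot\|_0$, $j > r$, the operator $T_f$ is ``almost contracting'' — its leading term has factor $C < 1$, at the cost of lower-order terms that are controlled by $M_j \|f\|_j^{\ast} \sum_{k < j} \|D^k h\|_0$.

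First I would establish convergence in the lowest relevant norm. On $\tilde V_{\delta'}$, \cref{eqn:delta_bound_holomorphic}-type Taylor--Lagrange bounds show that all the norms $\|D^k \cdot\|_0$ for $k \le r$ are dominated by $\delta'$ times $\|D^r \cdot\|_0$, so the $r$-th norm controls the whole $C^r$-topology on $\tilde V_{\delta'}$; since $\|D^r T_f^m(h)\|_0 \le (C')^m \|D^r h\|_0$ by iterating \eqref{eqn:contraction_r}, the series $\sum_m T_f^m(h)$ is absolutely convergent in the $C^r$-norm, with $\|R_f(h)\|_r$ (i.e. $\|D^r R_f(h)\|_0$) bounded by $(1-C')^{-1}\|D^r h\|_0$. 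This already gives that $\id - T_f$ is invertible as a bounded operator on the Banach space $V_{\delta'}$, hence in particular the partial sums form a Cauchy sequence in that norm.

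Next I would promote this to convergence in every higher norm $\|\cdot\|_j$, $j > r$, by induction on $j$. Assume the series converges in norms $\|\cdot\|_{r}, \dots, \|\cdot\|_{j-1}$ with explicit bounds on $\|R_f(h)\|_k$ for $k < j$. Applying \eqref{eqn:contraction_higher_order} to $h_m := T_f^m(h)$ gives
\begin{equation*}
  \|D^j T_f^{m+1}(h)\|_0 \le C \|D^j T_f^m(h)\|_0 + M_j \|f\|_j^{\ast} \sum_{r \le k < j} \|D^k T_f^m(h)\|_0 \; ,
\end{equation*}
and the sum $\sum_m \sum_{k<j}\|D^k T_f^m(h)\|_0$ converges by the inductive hypothesis (the $k \le r$ terms being controlled by the geometric series in $C'$, the $r < k < j$ terms by induction). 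A discrete Gronwall / geometric-series argument with ratio $C < 1$ then shows $\sum_m \|D^j T_f^m(h)\|_0$ converges, with $\|R_f(h)\|_j \le A_j(\|f\|_j) \cdot \|h\|_j + (\text{lower-order terms in } \|h\|_{<j})$, where $A_j$ depends polynomially (in fact affinely via the interpolation exponents) on $\|f\|_j$. Since $\tilde V_{\delta'}$ is graded Fr\'echet and the partial sums are Cauchy in every norm, the limit $R_f(h)$ exists in $\tilde V_{\delta'}$, and from the telescoping identity $(\id - T_f)\sum_{m=0}^N T_f^m(h) = h - T_f^{N+1}(h) \to h$ (the error going to $0$ in every norm) we conclude $(\id-T_f) R_f(h) = h$ and likewise $R_f(\id - T_f) = \id$, so $\id - T_f$ is invertible with inverse $R_f$.

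Finally, tameness of $R$: the bound obtained above is precisely of the form $\|R_f(h)\|_j \le C_j(1 + \|(f,h)\|_{j+d})$ on a suitable neighborhood (one can absorb $\|f\|_j$ and the lower $\|h\|_k$'s into $\|(f,h)\|_j$, so in fact $d = 0$ suffices here, with $b = r$), fitting \cref{def:smooth}. The main obstacle I expect is bookkeeping the induction cleanly — making sure the constants $A_j$ depend only on $\|f\|_j$ (uniformly over the neighborhood $\tilde\cU$) and that the lower-order error terms genuinely involve only norms $\|h\|_k$ with $k < j$, so that the tame estimate closes; this is exactly the structure of the Faà di Bruno estimate already extracted in \cref{lem:contraction_higher_order}, and the interplay with the strict contraction factor $C < 1$ in the leading term is what makes the geometric summation work despite the growing lower-order contributions.
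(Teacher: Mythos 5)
Your overall strategy matches the paper's: establish convergence of the Neumann series in the $\|\cdot\|_r$-norm via the strict $C'$-contraction, then bootstrap to higher norms by induction on $j$ using \eqref{eqn:contraction_higher_order}, summing the iterates and invoking the ratio $C<1$ for geometric convergence, and finally reading off tameness. That part is sound and is exactly what the paper does (compare $s_j = \sum_m \|D^j T^m_f(h)\|_0$ and the recursive bound \eqref{eqn:s_rec_bound}).

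However, there is a genuine gap in the step you dismiss as ``bookkeeping.'' You state the outcome of the induction as
$\|R_f(h)\|_j \le A_j(\|f\|_j)\cdot\|h\|_j + (\text{lower-order terms})$ with $A_j$ depending (affinely) on $\|f\|_j$. If $A_j$ has any nontrivial dependence on $\|f\|_j$, this bound contains a term of the form $\|f\|_j\cdot\|h\|_j$, a \emph{product of two high-order norms}, and such a bound is \emph{not} tame: on a neighborhood $W'$ in the Fréchet space you can only uniformly bound low norms such as $\|h\|_r$, never the $j$-th norms for $j>r$, so a term quadratic in $\|(f,h)\|_j$ cannot be dominated by $C_j(1+\|(f,h)\|_{j+d})$. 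The correct target — and what the paper actually proves — is
\[
  \sum_{m\geq 0}\|T_f^m(h)\|_j \;\le\; \tilde M_j\bigl(\|h\|_r\,\|f\|_j + \|h\|_j\bigr)\;,
\]
where each summand contains at most \emph{one} high norm; the cross term $\|h\|_r\|f\|_j$ is then absorbed on the neighborhood by bounding the low norm $\|h\|_r$. Reaching this shape is not automatic from the recursion: after substituting the inductive bound for $s_k$ into \eqref{eqn:s_rec_bound} you face monomials like $\|f\|_j^{\frac{j-k}{j-1}}\|f\|_k$ and $\|f\|_j^{\frac{j-k}{j-1}}\|h\|_k$ for $r<k<j$, which would overshoot to superlinear growth in the high norms if you crudely replaced $\|f\|_k$ by $\|f\|_j$ or $\|h\|_k$ by $\|h\|_j$. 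Closing the estimate requires the interpolation inequality of \cref{lem:interpolation} (to convert $\|f\|_k$ into $\|f\|_j^{\frac{k-1}{j-1}}\|f\|_1^{\frac{j-k}{j-1}}$ and use $\|f\|_1<1$) together with the coarse inequality $x^ty^{1-t}\le x+y$ of \eqref{eqn:coarse_amgm}, so that every monomial collapses to a constant times $\|h\|_r\|f\|_j + \|h\|_j$. This interpolation step is the heart of the tameness argument, not bookkeeping; without it your claimed bound is simply of the wrong form and the tame estimate does not close.
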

\begin{proof}
   We already know that every summand $T^k_f(h)$ is well-defined.
  We will show the following:
  there is a family $(\tilde M_{j})_{j \geq r}$ of positive constants (depending on $\tilde \cU, \delta', C, M_j$, but not on $f \in \tilde \cU$) such that 
  \begin{equation}
    \sum_{m \ge 0} \|T^m_f(h)\|_{j} \leq \tilde M_j(\| h \|_{r} \|f \|_{j}  +  \| h \|_{j}) \text{ for all }f \in \tilde U, h \in \tilde V_{\delta'}, j \geq r \; .
    \label{eqn:def_tilde_m}
  \end{equation}
  With this it is clear that $R_f(h)$ is well-defined, and equal to the inverse of $\id - T_f$ on $V_{\delta'}$.
    As we can bound $\|h\|_{r}$ in a $C^r$-neighborhood of $(f,h)$, this then also shows that $R_f(h)$ is tame.

    Let us inductively show the existence of $(\tilde M_j)_{j \geq r}$ satisfying \eqref{eqn:def_tilde_m}.
  Let $s_j = \sum_{m\geq 0} \| D^j T^m_{f}(h)\|_{0}$. 
    For $j \leq r$ we can bound 
    \begin{equation*}
      \| D^j T^m_{f}(h)\|_{0} \leq \| D^r T^m_{f}(h)\|_{0} \leq C'^m \| h \|_{r}
    \end{equation*}
    by \eqref{eqn:delta_bound} and \eqref{eqn:contraction_r} and so 
    \begin{equation}
      s_j \leq s_r =  \sum_{m \ge 0} \|T^m_f(h)\|_{r} \leq \frac{1}{1-C'} \| h \|_{r} \quad \forall j \leq r \;.
      \label{eqn:lower_s_bound}
    \end{equation}
    In particular we can start with $\tilde M_r = \frac{1}{1-C'}$.

    For $j > r$, we note that 
    \[
      \sum_{m\geq 0} \| T^m_{f}(h) \|_{j} \leq s_j + \sum_{m\geq 0} \| T^m_{f}(h) \|_{j-1} \leq s_j + \tilde M_{j-1}(\| h \|_{r} \|f \|_{j-1}  +  \| h \|_{j-1}) \leq s_j + \tilde M_{j-1}(\| h \|_{r} \|f \|_{j}  +  \| h \|_{j}) 
    \]
    since the norms $\|\cdot\|_{j}$ are an increasing family. So it is enough to bound $s_j$ by a constant times $(\| h \|_{r} \|f \|_{j}  +  \| h \|_{j})$.

    We will first show the following bound:
    \begin{equation}
      s_j \leq \frac{\| h \|_{j} +   M_j  \sum_{1 \leq k < j}\| f \|^{\frac{j-k}{j-1}}_{j} s_k}{1-C} \; .
      \label{eqn:s_rec_bound}
    \end{equation}

    For this, we first note that we can obtain
    \begin{equation}
      \| D^j T^m_f(h) \|_0 \leq C^m \|D^j h\|_{0} + \sum_{0\leq l<m} C^{m-l-1} M_j \sum_{1\leq k < j} \| f \|^{\frac{j-k}{j-1}}_{j}  \| D^k T^l_f(h) \|_0 \quad \forall m \geq 0
      \label{eqn:explicit_generating_series_bound}
    \end{equation}
    by induction over $m$ from \eqref{eqn:contraction_higher_order}.

    Summing up \eqref{eqn:explicit_generating_series_bound} we obtain
    \begin{align}
      s_j  &= \sum_{m\geq 0} \| D^j T^m_f(h) \|_0 \leq \sum_{m \geq 0} 
      \left(C^m \|D^j (h)\|_{0} + \sum_{0\leq l<m} C^{m-l-1} M_j \sum_{1\leq k < j} \| f \|^{\frac{j-k}{j-1}}_{j}  \| D^k T^l_f(h) \|_0 \right) \\
 &= \frac{1}{1-C} \|D^j (h)\|_{0} +  \sum_{m\geq 0} \sum_{0\leq l<m} C^{m-l-1} M_j \sum_{1\leq k < j} \| f \|^{\frac{j-k}{j-1}}_{j}  \| D^k T^l_f(h) \|_0  \\
 \label{eqn:before_sub}
  &= \frac{1}{1-C} \|D^j (h)\|_{0} +  \sum_{m' \geq 0} C^{m'} M_j \sum_{1\leq k < j} \| f \|^{\frac{j-k}{j-1}}_{j} \sum_{l\geq 0}  \| D^k T^l_f(h) \|_0  \\
 \label{eqn:after_sub}
 &= \frac{1}{1-C} (\|D^j (h)\|_{0} +  M_j  \sum_{1\leq k < j}\| f \|^{\frac{j-k}{j-1}}_{j} s_j) \\
 &\leq \frac{1}{1-C} (\|h\|_{j} + M_j  \sum_{1\leq k < j} \| f \|^{\frac{j-k}{j-1}}_{j} s_j) \; 
    \end{align}
    where we replaced the summation index $m$ by $m' \coloneqq m-l-1$ from \eqref{eqn:before_sub} to \eqref{eqn:after_sub}. Since all terms in our summations are nonnegative, we can freely change the summation order.
    This establishes \eqref{eqn:s_rec_bound}.

    Let us now insert \eqref{eqn:lower_s_bound} for $1\leq k \leq r$ and inductively 
    \eqref{eqn:def_tilde_m} combined with $s_k \leq \sum_{m \ge 0} \|T^m_f(h)\|_{j}$ for $r < k < j$ into \eqref{eqn:s_rec_bound} to obtain
    \begin{align}
      s_j &\leq \frac{\| h \|_{j} +   \sum_{1 \leq k < j} M_j \| f \|^{\frac{j-k}{j-1}}_{j} s_k}{1-C}  \\
      &\leq \frac{1}{1-C} (\| h \|_{j} +  \sum_{1 \leq k \leq r} M_j \| f \|^{\frac{j-k}{j-1}}_{j} \tilde M_r \| h \|_{r} +  \sum_{1 \leq k < j} M_j \| f \|^{\frac{j-k}{j-1}}_{j} \tilde M_k( \| h\|_{r}\| f \|_{k} + \| h \|_{k})) \; .
      \label{eqn:s_j_induction}
    \end{align}
We can bound every monomial appearing in the right hand side of \eqref{eqn:s_j_induction} by a constant times $\| h \|_{r} \|f \|_{j}  +  \| h \|_{j}$:
Trivially, $\|h\|_j \leq \| h \|_{r} \|f \|_{j}  +  \| h \|_{j}$. We also have
\begin{equation*}
  \| f \|^{\frac{j-k}{j-1}}_{j} \| h \|_{r} \leq (\| f \|_{j} + 1) \| h \|_{r} \leq \| f \|_{j} \| h \|_{r} + \| h \|_{j} \; .
\end{equation*}
  By \cref{lem:interpolation} there is a $I_j > 0$ such that
    \begin{equation*}
      \| f \|_{k} \leq  I_j \| f \|^\frac{k-1}{j-1}_{j} \| f \|^\frac{j-k}{j-1}_{1}\; , \quad
      \| h \|_{k} \leq  I_j \| h \|^\frac{k-1}{j-1}_{j} \| h \|^\frac{j-k}{j-1}_{1} \quad \forall f \in \tilde \cU, h \in \tilde V_{\delta'}.
    \end{equation*}
    So we have the following:
    \begin{equation*}
      \| f \|^{\frac{j-k}{j-1}}_{j} \| h\|_{r}\| f \|_{k} \leq I_j \| f \|^{\frac{j-k}{j-1}}_{j} \| h\|_{r} \| f \|^\frac{k-1}{j-1}_{j} \| f \|^\frac{j-k}{j-1}_{1} = I_j \| f \|_{j} \| h\|_{r} \| f \|^\frac{j-k}{j-1}_{1} \leq I_j \| f \|_{j} \| h\|_{r} \\
    \end{equation*}
    where we also used $\| f \|_1 < 1$. Finally
    \begin{equation*}
    \| f \|^{\frac{j-k}{j-1}}_{j} \| h \|_{k} \leq I_j (\| f \|^{\frac{j-k}{j-1}}_{j} \|h\|^{\frac{j-k}{j-1}}_{1}) \|h\|^{\frac{k-1}{j-1}}_{j} \leq
      I_j (\| f \|_{j} \| h \|_{1} + \| h \|_{j}) \leq I_j (\| f \|_{j} \| h \|_{r} + \| h \|_{j}) 
    \end{equation*}
    using also \eqref{eqn:coarse_amgm}.
  So we can bound every monomial appearing in the right hand side of \eqref{eqn:s_j_induction} by a constant times $\| h \|_{r} \|f \|_{j}  +  \| h \|_{j}$. From this, the existence of $\tilde M_j$ follows.
\end{proof}
\begin{lemma}
  Under the assumptions of 
  \cref{lem:r_well_defined}, $R$ is smooth tame.
\end{lemma}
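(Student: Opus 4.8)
The plan is to obtain $R$ from the smooth tame map $T$ by differentiating the resolvent (Neumann series) identity, using the tame bound \eqref{eqn:def_tilde_m} of \cref{lem:r_well_defined} to control the error terms. Recall from \cref{lem:contraction_higher_order} that $T\colon (f,h)\in\tilde\cU\times\tilde V_{\delta'}\mapsto T_f(h)$ is smooth tame and that each $T_f$ is \emph{linear} in $h$; hence $R$ is linear in $h$, so the partial derivative of $R$ in the $h$-slot is just $k\mapsto R_f(k)$, and the whole problem reduces to the dependence on $f$. From the invertibility $(\id-T_f)^{-1}=R_f$ established in \cref{lem:r_well_defined} one gets the resolvent identity $R_{f_1}-R_{f_0}=R_{f_1}\circ(T_{f_1}-T_{f_0})\circ R_{f_0}$ on $\tilde V_{\delta'}$. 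Feeding into it the continuity of $T$ and the bound \eqref{eqn:def_tilde_m} (which, for $f_1$ in a $C^r$-neighbourhood of $f_0$, bounds $\|R_{f_1}(u)\|_j$ by a constant times $\|u\|_j+\|u\|_r$) already gives that $R$ is continuous: for fixed $u$, $(T_{f_1}-T_{f_0})(u)\to 0$ in every norm as $f_1\to f_0$, hence so does $R_{f_1}((T_{f_1}-T_{f_0})(u))$.

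Next I would show $R$ is $C^1$ with derivative
\begin{equation*}
  DR(f,h;g,k)=R_f\bigl(\partial_f T(f,R_f(h);g)\bigr)+R_f(k),
\end{equation*}
where $\partial_f T$ denotes the partial derivative of $T$ in its first slot. The formula is forced by differentiating the fixed-point relation $R_f(h)=h+T_f(R_f(h))$ in $f$ and using that $T_f$ is itself the derivative of $T$ in its (linear) second slot: this gives $(\id-T_f)\bigl(\partial_f R_f(h)(g)\bigr)=\partial_fT(f,R_f(h);g)$. To check that this is the genuine Gateaux derivative in the Fréchet topology, I apply the resolvent identity with $f_1=f+tg$: the difference quotient equals $R_{f+tg}\bigl(\tfrac1t(T_{f+tg}-T_f)(R_f(h))\bigr)$, and subtracting $DR(f,h;g,0)$ yields $R_{f+tg}(u_t)+(R_{f+tg}-R_f)\bigl(\partial_fT(f,R_f(h);g)\bigr)$ with $u_t=\tfrac1t(T_{f+tg}-T_f)(R_f(h))-\partial_fT(f,R_f(h);g)\to 0$ in every norm by differentiability of $T$. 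Since $\|f+tg\|_j$ stays bounded for small $t$, \eqref{eqn:def_tilde_m} forces $\|R_{f+tg}(u_t)\|_j\to 0$ for every $j$, while the second term tends to $0$ by the continuity of $R$; so the limit vanishes in every norm. Continuity of $DR$ is immediate from the displayed formula, which exhibits it as a composition of the continuous maps $R$, $\partial_fT$ and bounded linear operations.

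It remains to promote this to \emph{smooth tame}. Tameness of $DR$ follows at once: the displayed formula writes it as a finite composition of the tame maps $R$ (by \cref{lem:r_well_defined}) and $\partial_fT$ (tame, as $T$ is smooth tame by \cref{lem:contraction_higher_order}) together with bounded linear operations, and a composition of tame maps is tame (as used in the proof of \cref{lem:comp_smooth_tame}). Differentiating the formula for $DR$ once more, justifying the limits by the same resolvent-identity argument, expresses $D^2R$ again as a finite composition of $R$, $\partial_fT$, $\partial_f^2T$ and linear operations; iterating, $D^kR$ is for every $k$ such a finite composition of $R$ with the derivatives $\partial_f^{\le k}T$, hence continuous and tame. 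Therefore $R$ is $C^k$ for all $k$ with each $D^kR$ tame, i.e.\ $R$ is smooth tame in the sense of \cref{def:smooth}.

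I expect the only real obstacle to be the passage, both in the $C^1$ step and in each inductive step, from the \emph{formal} Neumann-series derivative to the actual Gateaux limit in the Fréchet topology; this is precisely what the tame estimate \eqref{eqn:def_tilde_m} of \cref{lem:r_well_defined} is designed to handle. (Note that a naive application of a Banach fixed-point-with-parameters statement on the $C^r$-space $V_{\delta'}$ — on which $T_f$ genuinely contracts — only gives smoothness of $f\mapsto h$ as a $C^r$-valued map, not the $C^\infty$-valued and tame conclusion we need.) Everything else is routine bookkeeping.
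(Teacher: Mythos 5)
Your argument is correct and follows essentially the same route as the paper: both derive the derivative formula $DR(f,h;g,k)=R_f\bigl(\partial^{1}T_f(g)\,R_f h\bigr)+R_f(k)$ from the resolvent identity $R_{f+tg}-R_f=R_{f+tg}\circ(T_{f+tg}-T_f)\circ R_f$, justify the limit via the continuity/tame bound on $R$ and the $C^1$-ness of $T$, and then iterate with the chain rule and stability of tameness under composition to conclude smooth tameness. Your write-up is somewhat more explicit about the continuity of $R$ and the passage to the Gateaux limit, but the underlying proof is the same (Hamilton's Theorem I.5.3.1 strategy).
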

\begin{proof}
  Let us first show that $R$ is $C^1$. We follow the proof strategy of \cite[Theorem I.5.3.1]{Hamilton82}.
  As $R$ is linear in $h$, we only have to show that $R$ is $C^1$ in $f$. Let us compute
  \begin{align*}
    \frac{R_{f+tg}h - R_f h}{t} &= \frac{R_{f+tg}\left(  (\id - T_f)R_{f} h - (\id - T_{f+tg})R_f h  \right)}{t}
    &= \frac{R_{f+tg}\left( T_{f+tg}R_f h  - T_f R_{f} h \right)}{t} \; .
  \end{align*}
  For $t \rightarrow 0$ we have 
  $\frac{\left( T_{f+tg}R_f h  - T_f R_{f} h \right)}{t} \rightarrow \partial^1 T_f(g) R_f h$ as $T$ is $C^1$.
  Since we know that $R$ is continuous, we obtain
  \begin{align*}
    \frac{R_{f+tg}\left( T_{f+tg}R_f h  - T_f R_{f} h \right)}{t} \rightarrow R_f \partial^1 T_f(g) R_f h \; .
  \end{align*}

  As $T$ is smooth tame, and $R$ is tame, it follows from this formula and the chain rule that all higher derivatives
  of $R$ exist and are tame, so $R$ is also smooth tame.
\end{proof}
\begin{proof}[Proof of \cref{lem:lin_flat_smooth_param}]
  We use $\tilde \cU$, $\delta'$, $T$ and $R$ as in the previous lemmas. As in the proof of \cref{lem:lin_flat}, we let $\iota \in C^{0}(\sA, C^{\infty}(\B_{\delta'},\R^n))$ 
 be the constant family of the canonical inclusion $\B \subset \R^n$, so $\iota_{\sa}(x) = x$ for all $\sa \in \sA, x \in \B_{\delta'}$.
 The operator $T_f$ also acts on the affine space $\iota + \tilde V_{\delta'}$, as $T_f(\iota) \in \iota + \tilde V_{\delta'}$. Since the operator $\id - T_f$ has inverse $R_f$ on $\tilde V_{\delta'}$, we know that there is a unique fixed point in $\iota + \tilde V_{\delta'}$. This fixed point satisfies \eqref{eqn:lin_flat_smooth_param}.

 In fact, we have the follwing explicit formula for the fixed point:
 \[ h_{f} = \iota - R_{f}(\iota - T_f(\iota)) \; .\]
 To see this, we show that $(\id - T_f)h_f$ vanishes:
 \begin{equation*}
   (\id - T_f)h_f = (\id - T_f)(\iota) - (\id - T_f) R_{f} (\iota - T_f(\iota)) = \iota - T_f - (\iota - T_f(\iota)) = 0 
 \end{equation*}
 As $R$ and $T$ are tamely smooth by \cref{lem:contraction_higher_order} and \cref{lem:lin_flat_smooth_param}, $h$ depends tamely smooth on $f$ as a family of $C^{\infty}$-maps on the $\delta'$-ball around 0. By the local inverse function theorem and restricting $\tilde \cU$ further if necessary, we thereby constructed a family of $C^{\infty}$-diffeomorphisms. Uniqueness already follows from the uniqueness of \cref{lem:lin_flat}.
\end{proof}
\begin{proof}[Proof of \cref{thm:lin_no_param_smooth}]
  We follow the proof of Corollary~\ref{cor:lin_no_param} using our smooth version of the lemmas. We obtain smooth tame dependence
  of $h$ on a small ball $\B_{\delta'}$. Note that the extension process used in the proof of Corollary~\ref{cor:lin_no_param} is also smooth tame.

\end{proof}

\appendix
\section{Smooth dependence of contractions}
Here we state smooth dependence of fixed points of contractions. We follow the lines of \cite[Appendix A]{Yoccoz95} that we extend to the case of a complex Banach and a Fréchet parameter space.
\begin{lemma}
  Let $U$ be an open subspace of a Fréchet space $\cF$, let $V$ be a open subset of an Banach space $\cB$. Let $\rho \colon U \times \overline V \rightarrow V$ be a continuous map such that for every $u \in U$, the map $\rho_u \colon \overline V \rightarrow V$ is a $k$-contraction for a fixed $k \in (0,1)$ and fixed point $\phi(u) \in V$. %

  Then the map $\phi \colon U \rightarrow V$ is continuous. Moreover, if $\rho$ is $C^r$ or smooth, then so $\phi$. If $\cF$ and $\cB$ are complex Banach spaces and $\rho$ is holomorphic, then $\phi$ is also holomorphic.
  \label{lem:smooth_contraction}
\end{lemma}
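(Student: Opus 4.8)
The plan is to run the classical implicit-function-theorem argument for fixed points, adapted to Hamilton's Fréchet calculus \cite{Hamilton82}. Write the fixed point equation as $\rho(u,\phi(u)) = \phi(u)$, and let $D_1\rho$, $D_2\rho$ denote the partial derivatives of $\rho$ in its two arguments. The candidate for the derivative of $\phi$ is obtained by formally differentiating this identity:
\[
  D\phi(u)(h) = \bigl(\id - D_2\rho(u,\phi(u))\bigr)^{-1}\,D_1\rho(u,\phi(u))(h)\; .
\]
This formula will drive every step.

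First I would prove continuity of $\phi$. For $u,u' \in U$,
\[
  \|\phi(u)-\phi(u')\| = \|\rho(u,\phi(u))-\rho(u',\phi(u'))\| \le k\,\|\phi(u)-\phi(u')\| + \|\rho(u,\phi(u'))-\rho(u',\phi(u'))\|\; ,
\]
so $\|\phi(u)-\phi(u')\| \le (1-k)^{-1}\|\rho(u,\phi(u'))-\rho(u',\phi(u'))\|$, and the right-hand side tends to $0$ as $u'\to u$ by continuity of $\rho$ (keeping $\phi(u')$ in a fixed bounded set). Next comes the $C^1$ step. Since $\rho_u$ is a $k$-contraction, the bounded operator $D_2\rho(u,\phi(u))$ on $\cB$ has operator norm $\le k<1$, hence $\id - D_2\rho(u,\phi(u))$ is invertible via its Neumann series, and the candidate $D\phi \colon U\times\cF\to\cB$ above is well-defined and continuous. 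To see it is genuinely the derivative, I would fix $u,h$, abbreviate $R=(\id - D_2\rho(u,\phi(u)))^{-1}$, and estimate $\tfrac1t(\phi(u+th)-\phi(u))$ by inserting $\rho(u,\phi(u+th))$ between $\rho(u+th,\phi(u+th))$ and $\rho(u,\phi(u))$: the differentiability of $\rho$ in each slot combined with the contraction estimate first bootstraps $\|\phi(u+th)-\phi(u)\| = O(t)$ and then identifies the limit with $R\,D_1\rho(u,\phi(u))(h)$. Continuity of $D\phi$ then follows since it is a composition of the continuous maps $u\mapsto(u,\phi(u))$, $D_1\rho$, $D_2\rho$, and operator inversion on the open set of invertible operators.

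Higher regularity is then an induction on $r$: if $\rho$ is $C^{r+1}$ then $D_1\rho$ and $D_2\rho$ are $C^r$, by the inductive hypothesis $\phi$ is $C^r$, operator inversion is analytic on invertibles, and $C^r$ maps of graded Fréchet spaces are stable under composition and these algebraic operations (the chain rule of \cite{Hamilton82}); so the displayed formula exhibits $D\phi$ as a $C^r$ map, whence $\phi$ is $C^{r+1}$. Letting $r\to\infty$ gives the smooth case. In the holomorphic case ($\cF,\cB$ complex Banach, $\rho$ holomorphic) the same formula applies: $D_1\rho$, $D_2\rho$ are complex linear and depend holomorphically on the base point, the Neumann series converges in the holomorphic category, operator inversion is holomorphic, and the $C^1$ computation above then shows that $\phi$ is complex Fréchet differentiable, hence holomorphic.

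The main obstacle I anticipate is the $C^1$ differentiability estimate itself — showing that the difference quotient of $\phi$ truly converges, and to the claimed limit, rather than merely that a formal candidate exists — carried out cleanly in the Fréchet setting, where one must use Hamilton's Gâteaux-type notion of $C^1$ and verify joint continuity of the resulting $D\phi \colon U\times\cF\to\cB$. Once $C^1$ is established, the passage to $C^r$, to smooth, and to the holomorphic conclusion is essentially formal bookkeeping with the derivative formula.
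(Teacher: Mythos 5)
Your proposal is correct and follows essentially the same route as the paper: differentiate the fixed-point identity, derive $D\phi(u)(h) = (\id - D_2\rho(u,\phi(u)))^{-1}D_1\rho(u,\phi(u))(h)$ using the contraction bound $\|D_2\rho\|\le k<1$ for invertibility, and read off higher regularity and the holomorphic case from that formula. The paper's only variant is in the $C^1$ step, where instead of inserting $\rho(u,\phi(u+th))$ and bootstrapping $\|\phi(u+th)-\phi(u)\|=O(t)$, it integrates $D\rho$ along the segment from $(u,\phi(u))$ to $(u+th,\phi(u+th))$ and solves the resulting linear equation to write $(\phi(u+th)-\phi(u))/t$ directly as $\bigl(\id - \int_0^1 D_2\rho\,ds\bigr)^{-1}\int_0^1 D_1\rho(h)\,ds$, sidestepping the bootstrap.
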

\begin{proof}

  We first observe that $\phi$ is continuous. By the proof of the Banach fixed point theorem we have
  \[ \|v - \phi(u)\| \leq \sum^{\infty}_{i=0} \| \rho^i_{u}(v) - \rho^{i+1}_{u}(v) \| \leq \frac{1}{1-k} \| v - \rho(u,v) \| \quad \forall u \in U, v \in V \; . \]
  In particular we have
  \[ \| \phi(u') - \phi(u) \| \leq \frac{1}{1-k} \| \phi(u') - \rho(u, \phi(u')) \| = \frac{1}{1-k} \| \rho(u', \phi(u')) - \rho(u, \phi(u')) \| \; . \]
  So the continuity of $\phi$ at $u'$ follows from the continuity of $\rho$ at $(u', \phi(u'))$.

  Let us now further suppose that $\rho$ is $C^1$.
  We will compute the differential of $\phi$ as follows: 
  let $u \in U$, $h \in \cF$. By continuity of $\phi$ we know that for $|t|$ small enough, we have that the segment
  $[u, u+t\cdot h]$ is in $U$ and $[\phi(u),\phi(u+t\cdot h)]$ is in $V$.
  Let $\gamma \colon [0,1] \rightarrow U \times V$ be given by $\gamma(s) = (u + s\cdot t \cdot h, s \cdot \phi(u+t\cdot h) + (1 - s) \cdot \phi(u))$.
  We can then compute

  \begin{align*}
   &\phi(u + h\cdot t) - \phi(u) = \rho(u+h\cdot t,\phi(u+h\cdot t)) - \rho(u,\phi(u))  \\
   = &\rho(\gamma(1)) - \rho(\gamma(0)) =
   \int^1_{0} D_{\gamma(s)} \rho (\gamma'(s)) d s =
    \int^1_{0} D_{\gamma(s)}^{(1)} \rho (h \cdot t) + D_{\gamma(s)}^{(2)} \rho (\phi(u+t \cdot h) - \phi(u)) d s  \; .
  \end{align*}
  Here $D_{\gamma(s)}^{(1)} \rho \colon \cF \rightarrow \cB,  D_{\gamma(s)}^{(2)} \rho \colon \cB \rightarrow \cB$, are the partial derivatives of
  $\rho$ at $\gamma(s)$. We note that $D_{\gamma(s)}^{(2)} \rho$ has norm bounded by $k$. In particular, $1 -  \int^1_{0} D^{(2)}_{\gamma(s)} \rho ds$
  is an invertible operator on $\cB$. 
  Hence we obtain
  \begin{equation*}
    \frac{\phi(u + h \cdot t) - \phi(u)}{t} = (1 - \int^1_{0} D^{(2)}_{\gamma(s)} \rho ds )^{-1} \int^1_{0} D^{(1)}_{\gamma(s)} \rho (h) ds  \; .
\end{equation*}

  As $\rho$ is $C^1$, the right hand side converges to
  $(1 - D_{(u, \phi(u))}^{(2)} \rho)^{-1} D_{(u, \phi(u))}^{(1)} \rho(h)$ for $t \rightarrow 0$. This is a continuous function in $u$,
    so $\phi$ is also $C^1$.

    From the formula the statements for higher regularity or holomorphic maps follow directly.
\end{proof}
\bibliographystyle{alpha}
\bibliography{bibfile.bib}

\begin{thebibliography}{Ham82}

\bibitem[Ber10]{berger_persistence_2010}
Pierre Berger.
\newblock Persistence of laminations.
\newblock {\em Bull. Braz. Math. Soc. (N.S.)}, 41(2):259--319, 2010.

\bibitem[Ber14]{berger_normal_2014}
Pierre Berger.
\newblock Normal forms and {Misiurewicz} renormalization for dissipative
  surface diffeomorphisms.
\newblock {\em arXiv:1404.2235 [math]}, April 2014.
\newblock arXiv: 1404.2235.

\bibitem[GK98]{guysinsky_katok_normal_1998}
M.~Guysinsky and A.~Katok.
\newblock Normal forms and invariant geometric structures for dynamical systems
  with invariant contracting foliations.
\newblock {\em Math. Res. Lett.}, 5(1-2):149--163, 1998.

\bibitem[Ham82]{Hamilton82}
Richard~S. Hamilton.
\newblock The inverse function theorem of {N}ash and {M}oser.
\newblock {\em Bull. Amer. Math. Soc. (N.S.)}, 7(1):65--222, 1982.

\bibitem[Har82]{Hartman_1982}
Philip Hartman.
\newblock {\em Ordinary differential equations}.
\newblock Birkh\"{a}user, Boston, Mass., second edition, 1982.

\bibitem[HSY92]{SHY}
Brian~R. Hunt, Tim Sauer, and James~A. Yorke.
\newblock Prevalence: a translation-invariant ``almost every'' on
  infinite-dimensional spaces.
\newblock {\em Bull. Amer. Math. Soc. (N.S.)}, 27(2):217--238, 1992.

\bibitem[IR16]{Ilyashenko_Romaskevich_2016}
Yulij Ilyashenko and Olga Romaskevich.
\newblock Sternberg linearization theorem for skew products.
\newblock {\em J. Dyn. Control Syst.}, 22(3):595--614, 2016.

\bibitem[JV02]{jonsson_varolin_stable_2002}
Mattias Jonsson and Dror Varolin.
\newblock Stable manifolds of holomorphic diffeomorphisms.
\newblock {\em Inventiones Mathematicae}, 149(2):409--430, August 2002.

\bibitem[KS17]{Kalinin_Sadovskaya_2017}
Boris Kalinin and Victoria Sadovskaya.
\newblock Normal forms for non-uniform contractions.
\newblock {\em J. Mod. Dyn.}, 11:341--368, 2017.

\bibitem[MY10]{moreira_yoccoz_2010}
Carlos~Gustavo Moreira and Jean-Christophe Yoccoz.
\newblock Tangences homoclines stables pour des ensembles hyperboliques de
  grande dimension fractale.
\newblock {\em Ann. Sci. \'{E}c. Norm. Sup\'{e}r. (4)}, 43(1):1--68, 2010.

\bibitem[New79]{Newhouse_1979}
Sheldon~E. Newhouse.
\newblock The abundance of wild hyperbolic sets and nonsmooth stable sets for
  diffeomorphisms.
\newblock {\em Inst. Hautes \'{E}tudes Sci. Publ. Math.}, (50):101--151, 1979.

\bibitem[Poi28]{Poincare_Oeuvres_I}
H.~Poincar{\'e}.
\newblock {{\OE}}uvres de {Henri} {Poincar{\'e}}. {Publi{\'e}es} sous les
  auspices de l'{Acad{\'e}mie} des {Sciences} par {P}. {Appell}. {T}. {I}.
  {Publi{\'e}} avec la collaboration de {J}. {Drach}.
\newblock Paris: {Gauthier}-{Villars}. {CXXIX}, 382 p. (1928)., 1928.

\bibitem[Sel85]{sell_smooth_1985}
George~R. Sell.
\newblock Smooth {Linearization} {Near} a {Fixed} {Point}.
\newblock {\em American Journal of Mathematics}, 107(5):1035, October 1985.

\bibitem[Shu69]{Shub_1969}
Michael Shub.
\newblock Endomorphisms of compact differentiable manifolds.
\newblock {\em Amer. J. Math.}, 91:175--199, 1969.

\bibitem[Ste57]{sternberg_contractions_1957}
Shlomo Sternberg.
\newblock Local contractions and a theorem of {P}oincar\'{e}.
\newblock {\em Amer. J. Math.}, 79:809--824, 1957.

\bibitem[Tak71]{Takens71}
Floris Takens.
\newblock Partially hyperbolic fixed points.
\newblock {\em Topology}, 10:133--147, 1971.

\bibitem[Yoc95]{Yoccoz95}
Jean-Christophe Yoccoz.
\newblock Introduction to hyperbolic dynamics.
\newblock In {\em Real and complex dynamical systems ({H}iller\o d, 1993)},
  volume 464 of {\em NATO Adv. Sci. Inst. Ser. C: Math. Phys. Sci.}, pages
  265--291. Kluwer Acad. Publ., Dordrecht, 1995.

\end{thebibliography}
\Addresses

\end{document}